\crefname{enumi}{part}{parts} 
\newtcolorbox{todo}[1]{colback=yellow!75!white,colframe=red!75!black,title={#1}}
\pgfplotsset{compat=1.18}
\numberwithin{equation}{section}
\theoremstyle{plain}
\newtheorem{theorem}{Theorem}[section]
\newtheorem{proposition}[theorem]{Proposition}
\newtheorem{corollary}[theorem]{Corollary}
\newtheorem{lemma}[theorem]{Lemma}
\theoremstyle{remark}
\newtheorem{remark}[theorem]{Remark}
\theoremstyle{definition}
\newtheorem{definition}[theorem]{Definition}
\DeclareMathOperator{\dist}{dist}
\DeclareMathOperator{\id}{id}
\DeclareMathOperator{\im}{Im}
\DeclareMathOperator{\lspan}{span}
\DeclareMathOperator{\range}{rng}
\DeclareMathOperator{\re}{Re}
\DeclareMathOperator{\sech}{sech}
\renewcommand\MT_delim_default_inner_wrappers:n [1]{
    \@namedef{MT_delim_\MH_cs_to_str:N #1 _star_wrapper:nnn}##1##2##3{
        \mathopen{}\mathclose\bgroup ##1 ##2 \aftergroup\egroup ##3
    }
    \@namedef{MT_delim_\MH_cs_to_str:N #1 _nostarscaled_wrapper:nnn}##1##2##3{
        \mathopen{##1}##2\mathclose{##3}
    }
    \@namedef{MT_delim_\MH_cs_to_str:N #1 _nostarnonscaled_wrapper:nnn}##1##2##3{
        \ifx.##1\else\mathopen##1\fi##2\ifx.##3\else\mathclose##3\fi
    }
}
\DeclarePairedDelimiter{\abs}{\lvert}{\rvert}
\DeclarePairedDelimiter{\brac}{\lbrace}{\rbrace}
\DeclarePairedDelimiter{\brak}{\lbrack}{\rbrack}
\DeclarePairedDelimiter{\norm}{\lVert}{\rVert}
\DeclarePairedDelimiter{\parn}{\lparen}{\rparen}
\DeclarePairedDelimiter{\rest}{.}{\rvert}
\DeclarePairedDelimiter{\lbrac}{\lbrace}{.}
\newcommand{\mb}{\mathbb} 
\newcommand{\mc}{\mathcal} 
\newcommand{\mf}{\mathfrak} 
\newcommand{\mr}{\mathrm} 
\newcommand{\msc}{\mathscr} 
\newcommand{\mo}{\mathring}
\newcommand{\ol}{\overline}
\newcommand{\wh}{\widehat}
\newcommand{\R}{ \mb{R} } 
\newcommand{\Z}{ \mb{Z} } 
\newcommand{\N}{ \mb{N} } 
\newcommand{\C}{ \mb{C} } 
\newcommand{\T}{ \mb{T} } 
\newcommand{\placeholder}{\makebox[\widthof{$x$}][c]{$\cdot$}} 
\newcommand{\dee}{\mathop{}\!d}
\newcommand{\ceq}{\coloneqq}
\newcommand{\eqc}{\eqqcolon}
\newcommand{\loc}{{\textnormal{loc}} }
\newcommand{\even}{\mr{e}}
\newcommand{\imim}{{\mr{ii}} }
\newcommand{\reim}{{\mr{ri}} }
\newcommand{\F}{\msc{F}} 
\newcommand{\G}{\msc{G}} 
\newcommand{\W}{\msc{W}}
\newcommand{\X}{\msc{X}}
\newcommand{\Y}{\msc{Y}}
\newcommand{\Open}{\msc{O}}
\newcommand{\Curve}{\msc{C}}
\newcommand{\Kurve}{\msc{K}}
\newcommand{\fluidD}{\msc{D}}
\newcommand{\fluidS}{\msc{S}}
\newcommand{\fluidB}{\msc{B}}
\newcommand{\fluidV}{\msc{V}}
\newcommand{\confD}{\Omega}
\newcommand{\confS}{\Gamma}
\newcommand{\confSs}{\Gamma_{\mathrm{s}}}
\newcommand{\confSv}{\Gamma_{\mathrm{v}}}
\newcommand{\wsigma}{\breve{\sigma}}
\newcommand{\wzeta}{\breve{\zeta}}
\newcommand{\proj}{P}
\newcommand{\surf}[1]{#1_\mr{s}}
\newcommand{\vort}[1]{#1_\mr{v}}
\begin{document}

\title[Wave-borne vortices]{Vortex-carrying solitary gravity waves of large amplitude}

\date{\today}

\author[R. M. Chen]{Robin Ming Chen}
\address{Department of Mathematics, University of Pittsburgh, Pittsburgh, PA 15260}
\email{mingchen@pitt.edu}

\author[K. Varholm]{Kristoffer Varholm}
\address{Department of Mathematical Sciences, Norwegian University of Science and Technology, 7491, Trondheim, Norway}
\email{kristoffer.varholm@ntnu.no}

\author[S. Walsh]{Samuel Walsh}
\address{Department of Mathematics, University of Missouri, Columbia, MO 65211}
\email{walshsa@missouri.edu}

\author[M. H. Wheeler]{Miles H. Wheeler}
\address{Department of Mathematical Sciences, University of Bath, Bath BA2 7AY, United Kingdom}
\email{mw2319@bath.ac.uk}

\begin{abstract}
    In this paper, we study two-dimensional traveling waves in finite-depth water that are acted upon solely by gravity. We prove that, for any supercritical Froude number (non-dimensionalized wave speed), there exists a continuous one-parameter family $\Curve$ of solitary waves in equilibrium with a submerged point vortex. This family bifurcates from an irrotational uniform flow, and, at least for large Froude numbers, extends up to the development of a surface singularity. These are the first rigorously constructed gravity wave-borne point vortices without surface tension, and notably our formulation allows the free surface to be overhanging. We also provide a numerical bifurcation study of traveling periodic gravity waves with submerged point vortices, which strongly suggests that some of these waves indeed overturn. Finally, we prove that at generic solutions on $\Curve$ --- including those that are large amplitude or even overhanging --- the point vortex can be desingularized to obtain solitary waves with a submerged hollow vortex. Physically, these can be thought of as traveling waves carrying spinning bubbles of air.
\end{abstract}

\maketitle

\setcounter{tocdepth}{1}
\tableofcontents

\section{Introduction}

Vortices are regions of highly concentrated vorticity within a fluid. They can take many forms, with famous examples including shed vortices trailing in the wake of a submerged body and vortex streets created by Rayleigh--Taylor instabilities. This phenomenon is especially pronounced in two-dimensional incompressible flow, as the transportation of vorticity allows vortices to sustain their coherence over long time scales. Understanding the dynamics and qualitative properties of these structures is among the most classical problems in fluid mechanics.

Our interest here is in wave-borne vortices: traveling water waves carrying a vortex in their bulk. Rotational steady water waves have been a major subject of research for the past 20 years; see, for example, the survey~\cite{haziot2022traveling} and the references therein. The overwhelming majority of the literature treats waves with vorticities that extend throughout the entire flow. Localized vorticity, on the other hand, requires quite different techniques to analyze, and it is only in the last decade or so that rigorous existence results for wave-borne vortices have been obtained. Quite recently, authors have constructed water waves with submerged point vortices~\cite{shatah2013travelling,crowdy2014hollow,varholm2016solitary,le2019existence,cordoba2021existence,crowdy2023exact,keeler2023exact}, vortex patches~\cite{shatah2013travelling,chen2019existence,cordoba2021existence}, and ``vortex spikes''~\cite{ehrnstrom2023smooth}. Some of these solutions have been shown to be (conditionally) orbitally stable~\cite{varholm2020stability} or unstable~\cite{le2019existence}. This body of work can be roughly divided in two: one set of results concerns waves for which the influence of capillarity is much stronger than that of gravity, and a second set assumes that both surface tension and gravity are entirely absent. Neither regime is characteristic of most water waves found in nature, however, as gravitational effects predominate at typical length and velocity scales.

\begin{figure}[htb]
    \centering
    \tikzsetnextfilename{cyclopean_wave}
    \begin{tikzpicture}[use Hobby shortcut,scale=0.7 * \textwidth / 10cm]]
    \pgfmathsetmacro{\xmax}{5}


    \coordinate (L) at (-\xmax,1);
    \coordinate (LM) at (-0.5*\xmax,1.09);
    \coordinate (CLL) at (-1.5,1.75);
    \coordinate (CL) at (-1.75,2.5);
    \coordinate (CT) at (0,4);
    \coordinate (CR) at (1.75,2.5);
    \coordinate (CLR) at (1.5,1.75);
    \coordinate (RM) at (0.5*\xmax,1.08);
    \coordinate (R) at (\xmax,1);

    \coordinate (P) at (0,2.75);
    \coordinate (PL) at ($(P)+(-0.75,0)$);
    \coordinate (PR) at ($(P)+(0.75,0)$);
    \coordinate (PT) at ($(P)+(0,0.5)$);
    \coordinate (PB) at ($(P)+(0,-0.75)$);

    \fill[black!10, even odd rule] ([out angle=0, in angle=180]L) .. (LM).. (CLL) .. (CL) .. (CT) ..(CR) .. (CLR) .. (RM) .. (R) -- (\xmax,0) -- (-\xmax,0) -- cycle
    ([closed]PT) .. (PR) .. (PB) .. (PL);

    \draw[thick] ([out angle=0, in angle=180]L) .. (LM).. (CLL) .. (CL) .. (CT) ..(CR) .. (CLR) .. (RM) .. (R);
    \node[above left] at (R) {$p=0$};

    \draw[thick] ([closed]PT) .. (PR) .. (PB) .. (PL);
    \node at ($(P) + (0,-0.05)$) {$p \text{ const.}$};

    \draw[densely dotted, postaction={decorate,decoration={markings,mark=between positions 0 and 0.75 step 0.25 with {\arrow{Stealth}}}}] ([closed]$1.4*(PT) - 0.4*(P)$) .. ($1.4*(PR) - 0.4*(P)$) .. ($1.4*(PB) - 0.4*(P)$) .. ($1.4*(PL) - 0.4*(P)$);

    \draw[style={postaction={draw,decorate,decoration={border,angle=-45,
                                amplitude=5,segment length=4}}}] (-\xmax,0) -- (\xmax,0);


    \draw[-Latex] (-0.6*\xmax,3) -- (-0.6*\xmax,2) node[midway, left] {$g$};
\end{tikzpicture}
    \caption{Our framework allows for overhanging waves with submerged point vortices or hollow vortices. Illustrated here is a hollow vortex carried by an overturned solitary wave, which we choose to call a \emph{cyclopean wave}. Its ``eye'' is a bubble of air at constant pressure, surrounded by a vortex sheet. Both water--air interfaces of the wave are free.}
    \label{cyclopean wave figure}
\end{figure}
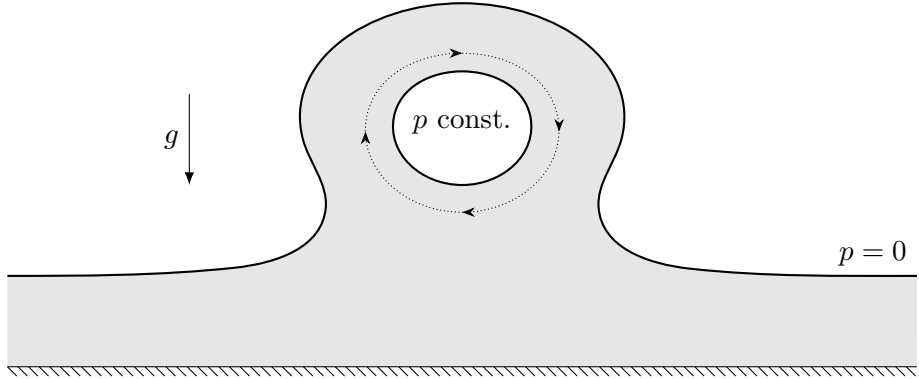

In this paper, we construct both wave-borne point vortices and wave-borne hollow vortices. The main novelties are that we allow for $O(1)$ gravity, do not require surface tension, and prove the existence of large-amplitude waves for which the air--sea interface is far from flat. In fact, these are the first existence results for large-amplitude solitary gravity waves with nontrivial compactly supported vorticity of any kind. Our formulation also allows for an overhanging free surface profile, see \cref{cyclopean wave figure}, and indeed we provide numerical evidence that some of the solutions do have this striking shape.

\subsection{The wave-borne point vortex problem}
\label{intro point vortex problem section}

First consider a two-dimensional gravity water wave carrying a point vortex. We assume the wave is \emph{traveling} or \emph{steady}, meaning it is translating with a fixed speed and without changing shape. Let $(x,y)$ be Cartesian coordinates in a co-moving frame so that the wave propagates to the left along the $x$-axis and gravity acts in the negative $y$-direction. As usual, we identify points $(x,y)$ with $z \ceq x + iy \in \C$. As depicted in \cref{point vortex domains figure}, the water occupies a domain $\fluidD \subset \R^2$ that is bounded below by an impermeable bed at $\fluidB \ceq \brac{ y = 0 }$ and above by a $C^{k+3+\alpha}$ curve $\fluidS$, where $k \geq 0$ and $\alpha \in (0,1)$ are fixed but arbitrary. Note that we expressly do \emph{not} assume that $\fluidS$ is the graph of a (single-valued) function in the horizontal variable.

We are primarily interested in \emph{solitary waves}, meaning that the free surface approaches a horizontal line in the upstream and downstream limits $x \to \pm\infty$. Let $c$ denote the speed of the wave, $d$ the depth of the undisturbed fluid domain, and $g$ the constant gravitational acceleration. Here $c$ is measured in a reference frame where the fluid is at rest at infinity. The relative strength of the gravitational and inertial forces is described by the \emph{Froude number}, which is the non-dimensional quantity $F^2 \ceq c^2/(gd)$. Through a standard rescaling of length and velocity, we can without loss of generality take $c^2 = 1$ and $d = 1$.

Suppose that the velocity field is incompressible and irrotational except for a single point vortex of strength $\gamma$, carried by the wave. The vortex is stationary in the moving frame; we take its location to be $z = ib$ for some $b > 0$. The system is then governed by the free boundary incompressible Euler equations with the so-called Helmholtz--Kirchhoff model for the vortex motion. Concretely, if $(\mf{u},\mf{v})$ is the relative velocity field, then the irrotational incompressible Euler equations become the requirement that
\begin{subequations}
    \label{intro Euler-Kirchhoff-Helmholtz}

    \begin{alignat}{-1}
        \label{intro steady Euler}
         & \mf{u} - i \mf{v} \text{ is holomorphic}                                                       & \qquad & \text{in }\fluidD \setminus \brac{i b}, \\
        \intertext{along with the \emph{kinematic boundary condition}}
        \label{intro kinematic Euler}
         & \mf{u}+i\mf{v} \text{ is purely tangential}                                                    &        & \text{on } \fluidS \cup \fluidB,        \\
        \intertext{and the \emph{dynamic} or \emph{Bernoulli boundary condition}}
        \label{intro dynamic Euler}
         & \frac{1}{2} \parn[\big]{ \mf{u}^2 + \mf{v}^2 } + \frac{1}{F^2} y = \frac{1}{2} + \frac{1}{F^2} &        & \text{on }\fluidS.                      \\
        \intertext{Note that thanks to the non-dimensionalization, the potential energy density is represented by the term $y/F^2$ above. Finally, the Helmholtz--Kirchhoff model is the requirement that}
        \label{intro point vortex advection}
         & \mf{u} - i \mf{v} = \frac{\gamma}{2\pi i } \frac{1}{z-i b} + O\parn*{\abs*{z-i b }}            &        & \text{as } z \to i b,
    \end{alignat}
\end{subequations}
where the absence of a constant term on the right hand side indicates that the point vortex is in equilibrium with the wave. As evidenced by~\eqref{intro point vortex advection}, the vortex strength $\gamma$ can be interpreted as the circulation around the vortex.

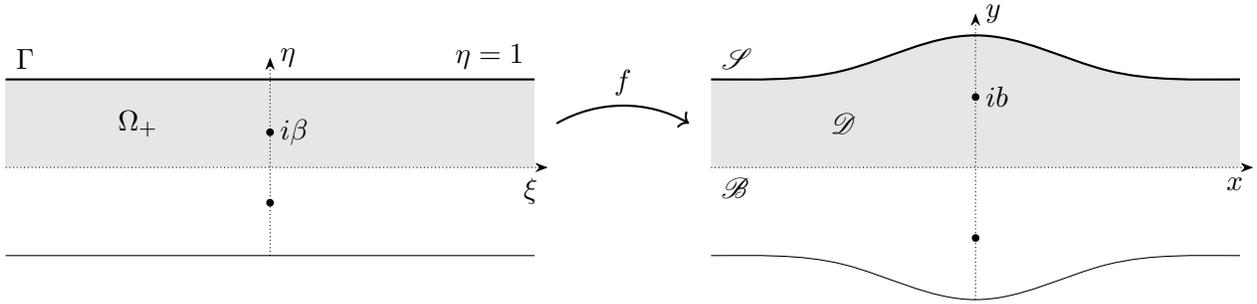
\begin{figure}[htb]
    \centering
    \tikzsetnextfilename{point_vortex_wave}
    \begin{tikzpicture}[use Hobby shortcut,scale=0.95*\textwidth / 14cm]
    \pgfmathsetmacro{\posconf}{0.4}
    \pgfmathsetmacro{\posphys}{0.8}
    \pgfmathsetmacro{\sep}{8}
    \pgfmathsetmacro{\xmax}{3}

    \coordinate (Oc) at (- \sep / 2,0); 
    \coordinate (Pc) at ($(Oc) + (0,\posconf)$); 
    \coordinate (Qc) at ($(Oc) - (0,\posconf)$); 

    \fill[black!10] ($(Oc) + (-\xmax,0)$) -- ($(Oc) + (\xmax,0)$) -- ($(Oc) + (\xmax,1)$) -- ($(Oc) + (-\xmax,1)$) -- cycle;

    \draw[ densely dotted,-Stealth] ($(Oc) + (-\xmax,0)$) -- ($(Oc) + (\xmax + 0.15,0)$) node[below left] {$\xi$};

    \draw[densely dotted,-Stealth] ($(Oc) + (0,-1)$) -- ($(Oc) + (0,1.25)$) node[right] {$\eta$};

    \draw[thick] ($(Oc) + (-\xmax,1)$) node[above right] {$\Gamma$} -- ($(Oc) + (\xmax,1)$) node[above left]{$\eta = 1$};
    \draw ($(Oc) + (-\xmax,-1)$) -- ($(Oc) + (\xmax,-1)$);

    \filldraw (Pc) circle[shift only, radius=1.25pt] node[right] {$i \beta$};
    \filldraw (Qc) circle[shift only, radius=1.25pt];

    \node at ($(Oc) + (-\xmax/2,0.5)$) {$\Omega_+$};

    \coordinate (Op) at (\sep / 2,0);
    \coordinate (Pp) at ($(Op) + (0,\posphys)$); 
    \coordinate (Qp) at ($(Op) - (0,\posphys)$); 

    \fill[black!10] ([out angle=0, in angle=180]$(Op) + (-\xmax,1)$) node[above right] {$\mathscr{S}$}..($(Op) + (-0.5*\xmax,1.1)$) .. ($(Op) + (0,1.5)$) .. ($(Op) + (0.5*\xmax,1.1)$).. ($(Op) + (\xmax,1)$) -- ($(Op) + (\xmax,0)$) -- ($(Op) + (-\xmax,0)$) -- cycle;

    \draw[densely dotted,-Stealth] ($(Op) + (-\xmax,0)$) node[below right] {$\mathscr{B}$} -- ($(Op) + (\xmax + 0.15,0)$) node[below left] {$x$};

    \draw[densely dotted,-Stealth] ($(Op) + (0,-1.5)$) -- ($(Op) + (0,1.75)$) node[right] {$y$};

    \draw[thick] ([out angle=0, in angle=180]$(Op) + (-\xmax,1)$) node[above right] {$\mathscr{S}$}..($(Op) + (-0.5*\xmax,1.1)$) .. ($(Op) + (0,1.5)$) .. ($(Op) + (0.5*\xmax,1.1)$).. ($(Op) + (\xmax,1)$);

    \draw ([out angle=0, in angle=180]$(Op) + (-\xmax,-1)$)..($(Op) + (-0.5*\xmax,-1.1)$) .. ($(Op) + (0,-1.5)$) .. ($(Op) + (0.5*\xmax,-1.1)$).. ($(Op) + (\xmax,-1)$);

    \filldraw (Pp) circle[shift only, radius=1.25pt] node[right] {$i b$};
    \filldraw (Qp) circle[shift only, radius=1.25pt];

    \node at ($(Op) + (-\xmax/2,0.5)$) {$\mathscr{D}$};

    \draw[thick,->] ($(Oc) + (\xmax,0) + (0.25,0.5)$) [out angle=30] .. node[midway,above] {$f$} ($(Op) + (-\xmax,0) + (-0.25,0.5)$);

\end{tikzpicture}
    \caption{Right: the physical form of the wave in the $z$-plane. Left: the corresponding conformal domain in the $\zeta$-plane. The shaded subset $\confD_+$ is mapped by $f$ to the fluid domain $\fluidD$, while the upper boundary $\confS$ is mapped to the fluid interface~$\fluidS$.}
    \label{point vortex domains figure}
\end{figure}

A major challenge in studying water waves is that the fluid domain is itself one of the unknowns. In order to do functional analysis, we must therefore reformulate the problem in some canonical domain, potentially at the cost of making the governing equations more complicated. In the present paper, we do this by viewing $\fluidD$ as the image of a fixed infinite strip under an unknown conformal mapping. Denote by $\zeta = \xi + i \eta$ the variables in the conformal domain, and define
\begin{equation}
    \label{conformal domain definition}
    \confD \ceq \brac*{ \zeta \in \C : -1 < \eta < 1}, \quad \text{and} \quad \confS \ceq \brac{ \eta = 1 }.
\end{equation}
There exists a conformal mapping $f \in C^{k+3+\alpha}(\ol{\confD})$ such that $\fluidD = f(\confD_+)$, where $\confD_+ \ceq \confD \cap \brac{ \eta > 0 }$ is the upper half of $\confD$,
\begin{equation}
    \label{asymptotics f}
    \partial_\zeta f(\zeta) \to 1 \quad \text{ as } \abs \zeta \to \infty,
\end{equation}
and such that
\begin{equation}
    \label{symmetry f}
    f(-\zeta) = -f(\zeta), \quad f(\ol{\zeta}) = \ol{f(\zeta)}
\end{equation}
for all $\zeta \in \confD$. These properties enforce the symmetry of the domain, and in particular show that $\im{f}$ is odd in $\eta$. Thus $f$ is real-on-real, and so $\fluidB = f(\R)$.

The point vortex at $z = i b$ in the physical domain $\fluidD$ will then be the image of a point $i\beta \in \confD_+$. To maintain the reflection symmetry across the real axis, we may imagine introducing a phantom vortex at $-i \beta$ having the opposite strength. For planar vortices, without the boundary, this would result in a co-translating vortex pair. The physical domain and conformal domain are illustrated in~\cref{point vortex domains figure}. This symmetry allows us to relax the sign condition on $b$, and hence on $\beta$, taking $\beta \in (-1,1)$, which is useful as we are bifurcating from $\beta=0$. Of course it may now be that $-ib = f(-i\beta)$ is the point vortex in the fluid domain $\fluidD$, rather than $ib = f(i\beta)$. The system~\eqref{intro Euler-Kirchhoff-Helmholtz} is modified in the obvious way.

As is well known, there is an explicit relative complex potential
\begin{equation}
    \label{definition complex potential}
    W(\zeta;\gamma,\beta) \ceq \frac{\gamma}{2\pi i} \log\parn*{ \frac{\sinh(\frac{\pi}{2} (\zeta - i \beta))}{\sinh(\frac{\pi}{2} (\zeta+i\beta))} } + \zeta.
\end{equation}
such that any solution of~\eqref{intro Euler-Kirchhoff-Helmholtz} must have complex velocity $\mf{u}-i\mf{v} = (\partial_\zeta W/\partial_\zeta f) \circ f^{-1}$. One can readily confirm that $W$ has constant imaginary part on $\partial \confD$. Moreover, $W_\zeta$ is meromorphic on $\confD$, satisfies $\partial_\zeta W \to 1$ as $\xi \to \pm\infty$, and has simple poles at $\zeta = \pm i \beta$ with residues $\pm \gamma$, representing the contribution of the two counter rotating point vortices. For any conformal mapping $f$, the velocity field obtained in this way satisfies the irrotational, incompressible Euler equation~\eqref{intro steady Euler} and kinematic condition~\eqref{intro kinematic Euler}. It remains to ensure that the dynamic boundary condition~\eqref{intro dynamic Euler} and the Helmholtz--Kirchhoff condition~\eqref{intro point vortex advection} hold. In~\cref{point vortex abstract operator section} we show that the latter has the elegant expression
\begin{equation}
    \label{intro f vortex advection equation}
    \frac{\partial_\zeta^2 f (i \beta)}{\partial_\zeta f (i\beta)} = \pi i \kappa,
\end{equation}
provided that $\kappa \in \R$ is related to $\gamma$ and $\beta$ according to
\begin{equation}
    \label{definition gamma}
    \gamma = \gamma(\kappa,\beta) \ceq - \frac{4 \sin(\pi\beta)}{\cos(\pi\beta) - \kappa \sin(\pi\beta)}.
\end{equation}
Thus, the unknowns $(f,\kappa,\beta)$ can be used to uniquely describe a wave-borne point vortex. For the solution to be physical, we require that $f$ is conformal on $\confD$ with an injective $C^1$ extension to $\ol{\confD}$.

\subsection{The wave-borne hollow vortex problem}
\label{hollow vortex intro section}

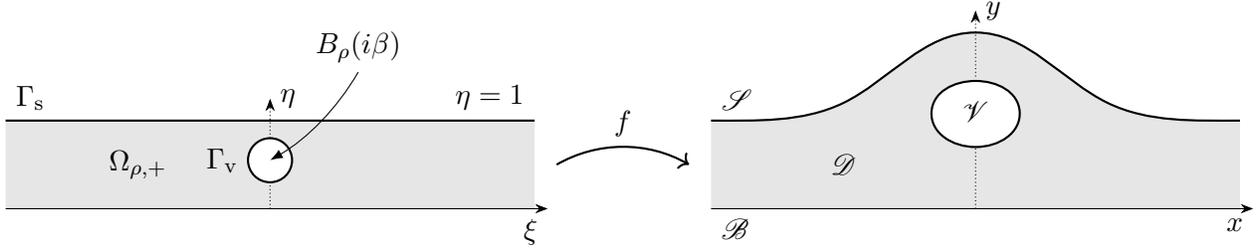
\begin{figure}[htb]
    \centering
    \tikzsetnextfilename{hollow_vortex_wave}
    \begin{tikzpicture}[use Hobby shortcut,scale=0.95*\textwidth / 14cm]
    \pgfmathsetmacro{\sep}{8}
    \pgfmathsetmacro{\xmax}{3}

    \coordinate (Oc) at (- \sep / 2,0); 
    \coordinate (Pc) at ($(Oc) + (0,0.55)$); 

    \fill[black!10, even odd rule] ($(Oc) + (-\xmax,0)$) -- ($(Oc) + (\xmax,0)$) -- ($(Oc) + (\xmax,1)$) -- ($(Oc) + (-\xmax,1)$) -- cycle
    (Pc) circle (0.25);

    \node at ($(Oc) + (-\xmax/2,0.5)$) {$\Omega_{\rho,+}$};

    \draw[-Stealth] ($(Oc) + (-\xmax,0)$) -- ($(Oc) + (\xmax + 0.15,0)$) node[below left] {$\xi$};

    \draw[densely dotted] (Oc) -- ($(Pc)+ (0,-0.25)$);
    \draw[densely dotted,-Stealth] ($(Pc)+ (0,0.25)$) -- ($(Oc) + (0,1.25)$) node[right] {$\eta$};

    \draw[thick] ($(Oc) + (-\xmax,1)$) node[above right] {$\confSs$} -- ($(Oc) + (\xmax,1)$) node[above left]{$\eta = 1$};

    \draw[thick] (Pc) circle (0.25);
    \node[left] at ($(Pc)+ (-0.25,0)$) {$\confSv$};
    \draw[-Latex] ([in angle=30]$(Pc) + (1,1)$) node[above] {$B_\rho(i \beta)$} .. (Pc);

    \coordinate (Op) at ( \sep / 2,0); 
    \coordinate (Pp) at ($(Op) + (0,1.1)$); 

    \fill[black!10, even odd rule] ([out angle=0, in angle=180]$(Op) + (-\xmax,1)$) ..($(Op) + (-0.5*\xmax,1.2)$) .. ($(Op) + (0,2)$) .. ($(Op) + (0.5*\xmax,1.2)$).. ($(Op) + (\xmax,1)$) -- ($(Op) + (\xmax,0)$) -- ($(Op) + (-\xmax,0)$) -- cycle
    ([closed]$(Pp) + (0,0.35)$) .. ($(Pp) + (0.5,0)$) .. ($(Pp) + (0,-0.4)$) .. ($(Pp) + (-0.5,0)$);

    \draw[-Stealth] ($(Op) + (-\xmax,0)$) node[below right] {$\mathscr{B}$} -- ($(Op) + (\xmax + 0.15,0)$) node[below left] {$x$};

    \draw[densely dotted] (Op) -- ($(Pp) + (0,-0.4)$);
    \draw[densely dotted,-Stealth] ($(Pp) + (0,0.4)$) -- ($(Op) + (0,2.25)$) node[right] {$y$};

    \draw[thick] ([out angle=0, in angle=180]$(Op) + (-\xmax,1)$) node[above right] {$\mathscr{S}$}..($(Op) + (-0.5*\xmax,1.2)$) .. ($(Op) + (0,2)$) .. ($(Op) + (0.5*\xmax,1.2)$).. ($(Op) + (\xmax,1)$);

    \node at (Pp) {$\mathscr{V}$};
    \draw[thick] ([closed]$(Pp) + (0,0.35)$) .. ($(Pp) + (0.5,0)$) .. ($(Pp) + (0,-0.4)$) .. ($(Pp) + (-0.5,0)$);

    \node at ($(Op) + (-\xmax/2,0.5)$) {$\mathscr{D}$};

    \draw[thick,->] ($(Oc) + (\xmax,0) + (0.25,0.5)$) [out angle=30] .. node[midway,above] {$f$} ($(Op) + (-\xmax,0) + (-0.25,0.5)$);

\end{tikzpicture}
    \caption{Right: the physical form of the wave-borne hollow vortex in the $z$-plane. Left: the corresponding conformal domain $\confD_{\rho,+}$ in the $\zeta$-plane. The conformal map $f$ takes the upper boundary $\confSs$ to the top interface $\fluidS$, and $\confSv$ to the boundary of the vortex core $\fluidV$.}
    \label{hollow vortex configuration figure}
\end{figure}

We also consider another classical model of localized vorticity, with a voluminous applied literature: hollow vortices. Roughly speaking, this corresponds to the situation where the vorticity is a measure supported on a collection of Jordan curves. The vortex cores bounded by these curves are taken to be regions of constant pressure; one can imagine them as bubbles of air suspended in the fluid. Recently,~\textcite{chen2023desingularization} systematically desingularized translating, rotating, or stationary planar point vortex configurations into hollow vortices, which could then be continued via global bifurcation theory until the onset of a singularity. In the present paper, we adapt some of those ideas to help construct solitary gravity water waves with a submerged hollow vortex.

To formulate the problem, let us again suppose that we have a solitary wave with fluid domain $\fluidD$. We assume that there is a single hollow vortex being carried by the wave, the vortex core being an open set denoted by $\fluidV$. As it is submerged, the core $\fluidV$ must lie completely below $\fluidS$, above $\fluidB$, and $\fluidV \subset \C \setminus \fluidD$. Thus $\fluidD$ is doubly connected while $\fluidD \cup \ol{\fluidV}$ is simply connected.

Let $(\mf{u}, \mf{v})$ again be the relative velocity in the physical variables. Incompressibility and irrotationality still take the form
\begin{subequations}
    \label{intro hollow vortex problem}
    \begin{alignat}{-1}
        \label{intro steady hollow}
         & \mf{u} - i \mf{v} \text{ is holomorphic}                                                 & \qquad & \text{in } \fluidD,                                     \\
        \intertext{while the kinematic condition must also hold on the vortex boundary:}
        \label{intro kinematic hollow}
         & \mf{u}+i\mf{v} \text{ is purely tangential}                                              &        & \text{on $\fluidS$, $\fluidB$, and $\partial \fluidV$}. \\
        \intertext{Next, the dynamic condition is imposed on both free surfaces. Because the air above $\fluidS$ and inside the vortex core $\fluidV$ are both taken to be at constant pressure (not necessarily equal), we must have that}
        \label{intro dynamic hollow surface}
         & \frac{1}{2}\parn*{ \mf{u}^2 + \mf{v}^2 } + \frac{1}{F^2} y = \frac{1}{2} + \frac{1}{F^2} &        & \text{on } \fluidS                                      \\
        \label{intro dynamic hollow vortex}
         & \frac{1}{2} \parn*{ \mf{u}^2 + \mf{v}^2 } + \frac{1}{F^2} y = q                          &        & \text{on } \partial \fluidV,
    \end{alignat}
    where $q \in \R$ is a Bernoulli constant. Finally, since $\fluidV$ is surrounded by a vortex sheet, we require that
    \begin{equation}
        \label{intro hollow circulation}
        \int_{\partial \fluidV} \parn*{ \mf{u}- i \mf{v} } \dee z = \gamma,
    \end{equation}
    with $\gamma \in \R$ being the vortex strength.
\end{subequations}

As before, we fix the domain through the use of conformal mapping. We will construct the hollow vortices as desingularized point vortices, and so a natural choice is to take
\[
    \confD_\rho \ceq \confD \setminus \ol{B_{\abs\rho}(i\beta) \cup B_{\abs\rho}(-i\beta)},
\]
for $0 < \abs\rho \ll 1$; see~\cref{hollow vortex configuration figure}. Here, the idea is that the vortex boundary $\partial \fluidV$ should be an approximate streamline for the point vortex velocity field, which are asymptotically circular. As before, we have doubled the domain: the line $\brac{\eta = 0}$ corresponds to the bed and $\confSs \ceq \brac{\eta = 1}$ is the pre-image of the free surface. We call $\rho$ the conformal radius of the hollow vortex; in the desingularization procedure, it will serve as a bifurcation parameter. The boundary of the upper vortex will be given by $\partial\fluidV = f(\confSv)$, where $\confSv = \partial B_\rho(i\beta)$.

As in the point vortex case, there is a relative complex potential $W$ depending only on the parameters $\rho$, $\gamma$, and $\beta$, though we lack a simple explicit formula; see~\cref{complex potential lemma}. It follows that the wave-borne hollow vortex configuration can be described entirely by the unknowns $(f, q, \gamma, \beta, \rho)$. However, from the asymptotics for the point vortex velocity field~\eqref{intro point vortex advection}, we expect that $q$ diverges like $1/\rho^2$ as $\rho \searrow 0$. In the actual analysis, we will therefore work with a \emph{normalized Bernoulli constant} that we call $Q$, and turns out to satisfy $Q = O(\rho)$ and $\rho Q = \rho^2 q + O(1)$ as $\rho \searrow 0$; see~\eqref{hollow vortex leading order parameters} and \cref{hollow vortex nonlocal formulation section}.

\subsection{Informal statement of results}

Our first contribution is the following theorem, which establishes the existence of large families of solitary gravity water waves carrying point vortices in their bulk.

\begin{theorem}[Wave-borne point vortices]
    \label{intro point vortex theorem}
    For any supercritical Froude number $F^2 > 1$, there exists a global curve $\Curve$ of solitary gravity water waves with a submerged point vortex. It admits a global $C^0$ parameterization
    \[
        \Curve = \brac*{ (f(s), \kappa(s), \beta(s)) : s \in \R } \ \subset\ C^{k+3+\alpha}(\ol{\confD}) \times \R^2
    \]
    and satisfies the following.
    \begin{enumerate}[label=\textup{(\alph*)}]
        \item \textup{(Local uniqueness)} The curve $\Curve$ bifurcates from the trivial uniform flow
              \[
                  (f(0), \kappa(0), \beta(0)) = (\id, 0, 0),
              \]
              which by~\eqref{definition gamma} is irrotational as $\gamma(0) = \gamma(\kappa(0),\beta(0)) = 0$. In a neighborhood of this flow, $\Curve$ comprises all solutions to both the irrotational water wave problem and the wave-borne point vortex problem~\eqref{intro Euler-Kirchhoff-Helmholtz} with the given Froude number.
        \item \textup{(Limiting behavior)} Either $\Curve$ is a \emph{closed loop} or, as we follow it to either extreme, \emph{blowup} occurs in that
              \begin{equation}
                  \label{intro theorem blowup alternative}
                  \limsup_{s \to \pm\infty} \parn*{ \sup_{\confS} \parn[\bigg]{ \abs{\partial_\zeta f(s) } + \frac{1}{\abs{\partial_\zeta f(s)}} } + \abs{\gamma(s)} } = \infty.
              \end{equation}
              If $\Curve$ is a closed loop, then it must pass through a nontrivial irrotational solitary gravity wave. If $F^2 > 2$, then $\Curve$ cannot be a closed loop and thus the blowup alternative~\eqref{intro theorem blowup alternative} occurs. In either case, $\gamma(s) \ne 0$ for all but a discrete set of parameter values $s \in \R$.
        \item \textup{(Symmetry and monotonicity)} For each solution on $\Curve$, the streamlines and horizontal velocity $\mf{u}(s)$ are even across the imaginary axis, while the vertical velocity $\mf{v}(s)$ is odd. Moreover,
              \begin{equation}
                  \label{intro monotonicity}
                  \mf{v}(s) < 0 \qquad \text{in } \parn*{ \fluidD(s) \cup \fluidS(s) } \cap \brac{ x > 0},
              \end{equation}
              where $\fluidD(s) \ceq f(s)(\confD_+)$ and $\fluidS(s) \ceq f(s)(\confS)$.
    \end{enumerate}
\end{theorem}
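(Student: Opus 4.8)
The plan is to cast the wave-borne point vortex problem as an abstract operator equation $\mathcal{F}(f,\kappa,\beta) = 0$ on a suitable Banach space, verify that the linearization at the trivial flow $(\id,0,0)$ is Fredholm of index zero with a one-dimensional kernel, and then run an analytic (or $C^{k+1}$) global bifurcation argument of Rabinowitz type. The operator $\mathcal{F}$ has two components: the dynamic boundary condition~\eqref{intro dynamic Euler}, rewritten on $\confS$ using that the complex velocity is $(\partial_\zeta W/\partial_\zeta f)\circ f^{-1}$ with $W$ given by~\eqref{definition complex potential}, and the Helmholtz--Kirchhoff condition in the form~\eqref{intro f vortex advection equation}, namely $\partial_\zeta^2 f(i\beta)/\partial_\zeta f(i\beta) = \pi i \kappa$. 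Following the standard conformal reformulation of water waves, the dynamic condition on $\confS$ becomes a quasilinear pseudodifferential equation for the trace of $f$ (or equivalently for $\log \partial_\zeta f$), coupling the Dirichlet and Neumann data of a harmonic function on the strip $\confD_+$; here the periodicity/solitary-wave decay at $\xi \to \pm\infty$ enforced by~\eqref{asymptotics f} must be built into the function spaces (weighted Hölder spaces, or spaces with a prescribed rate of decay). Local uniqueness in part~(a) follows because near $(\id,0,0)$ one has $\gamma \approx -4\pi\beta$, so $\gamma = 0$ forces $\beta = 0$, whereupon~\eqref{intro f vortex advection equation} reduces to $\partial_\zeta^2 f(0) = 0$ and the system collapses to the classical irrotational gravity water wave equation, whose only small solution at supercritical $F^2$ is the uniform flow by the well-known non-existence of small solitary waves above the critical speed.

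For the bifurcation itself, I expect the kernel of the linearized operator to be spanned by a mode combining a nonzero $\dot\beta$ with the correspondingly forced conformal map correction; the transversality (Crandall--Rabinowitz) condition should reduce to a dispersion-type relation that is satisfied precisely because $F^2 > 1$ is supercritical, so that the relevant linear operator on the strip is invertible on the solitary-wave function spaces and the $\beta$-derivative of~\eqref{intro f vortex advection equation} is nondegenerate. Once local bifurcation is in hand, the global continuation produces the curve $\Curve$ parameterized by $s \in \R$, and the standard dichotomy (the image of $\Curve$ is either a closed loop or $\Curve$ leaves every closed bounded subset on which $\mathcal{F}$ is proper) gives part~(b) after one checks properness: the map $\mathcal{F}$ is proper on sets where $\partial_\zeta f$ and $1/\partial_\zeta f$ are bounded on $\confS$, where $\gamma$ is bounded, and where (crucially) the conformal map does not develop a self-intersection or the vortex does not hit a boundary, i.e. $\beta$ stays away from $\pm 1$. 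Ruling out $\beta \to \pm 1$ in the blowup alternative, and showing it can be absorbed into~\eqref{intro theorem blowup alternative}, will require a quantitative argument: as $\beta \to \pm 1$ the residue formula~\eqref{definition gamma} shows $\gamma \to \infty$ unless $\kappa$ also blows up, and in the latter case~\eqref{intro f vortex advection equation} forces $\partial_\zeta f$ or its reciprocal to blow up on $\confS$ via a maximum-principle/harmonic-estimate comparison between the interior value at $i\beta$ and the boundary trace. The statement that a closed loop must pass through a nontrivial irrotational solitary wave follows from the local uniqueness in~(a) together with a degree-theoretic parity count; and the refinement that $\Curve$ cannot be a closed loop when $F^2 > 2$ should come from a conserved or monotone quantity along $\Curve$ — plausibly the flow force or excess mass, which for irrotational solitary waves is known to be sign-definite in the range $1 < F^2 < 2$ but whose relevant bound degrades, so that a loop returning to the trivial flow is excluded; I would identify the precise invariant from the Hamiltonian structure and the known behavior of the irrotational branch.

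For part~(c), the symmetry is preserved along the entire curve because the abstract operator $\mathcal{F}$ respects the reflection $\zeta \mapsto -\bar\zeta$ (equivalently the evenness/oddness encoded in~\eqref{symmetry f}), so one may work from the outset in the closed subspace of symmetric configurations; uniqueness of the local branch then shows $\Curve$ lies in this subspace, giving that $\mf{u}(s)$ is even and $\mf{v}(s)$ is odd across the imaginary axis and that streamlines are symmetric. The strict monotonicity~\eqref{intro monotonicity}, $\mf{v}(s) < 0$ for $x > 0$, is the one genuinely ``hard'' qualitative step: near the trivial flow it holds by a perturbative computation (the leading-order vertical velocity is an explicit dipole-type field that is strictly negative in the relevant half-domain), and I would propagate it along $\Curve$ by a continuity/open-closed argument using the maximum principle and Hopf lemma applied to the harmonic function $\mf{v}(s)$ on $\fluidD(s) \cap \{x>0\}$ — noting that $\mf{v}(s)$ vanishes on the symmetry axis, has a sign-definite normal derivative on the bed and free surface coming from the kinematic and dynamic conditions, and cannot develop an interior zero without violating Hopf; the only subtlety is the behavior near the point vortex at $ib$, where $\mf{v}$ has a bounded but nonsmooth contribution, which one handles by excising a small disc and checking the sign of $\mf{v}$ on its boundary directly from~\eqref{intro point vortex advection}. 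The main obstacle overall is the properness/blowup analysis in part~(b) — in particular converting a potential degeneration of the conformal map (loss of injectivity, or $\beta \to \pm1$) into the clean blowup statement~\eqref{intro theorem blowup alternative} purely in terms of $\partial_\zeta f|_{\confS}$ and $\gamma$.
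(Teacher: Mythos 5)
Your skeleton (conformal reformulation, abstract operator equation, global continuation, open/closed monotonicity argument, a priori estimates to clean up the blowup alternative) matches the paper, but the local bifurcation structure is mis-specified in a way that matters. There is no trivial branch to bifurcate from: for $\beta \neq 0$ the pair $(w,\kappa)=(0,0)$ is \emph{not} a solution, since $\gamma(0,\beta)=-4\tan(\pi\beta)\neq 0$ enters the Bernoulli condition through $a$, so a Crandall--Rabinowitz setup with a one-dimensional kernel and a transversality condition has no eigenvalue crossing to exploit. The paper's key observation is the opposite of a kernel: the linearization of $(w,\kappa)\mapsto\F(w,\kappa,\beta)$ at $(\id,0,0)$ is lower triangular with diagonal entries $\rest{(-\partial_\eta+1/F^2)}_{\confS}$ (invertible precisely when $F^2>1$, by a supersolution argument) and $-1$, hence an isomorphism; the local curve and the local uniqueness in (a) then come from the plain analytic implicit function theorem with the conformal vortex altitude $\beta$ as the parameter (this is exactly the ``superposed vortex and mirror vortex of zero strength'' device the introduction emphasizes). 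Correspondingly, the global step is not a Rabinowitz degree argument but an analytic global implicit function theorem adapted to unbounded domains, in which ``loss of compactness'' and ``loss of Fredholmness'' appear as extra alternatives that must then be excluded.

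The genuine gap is in your properness claim for part (b). Bounds on $\abs{\partial_\zeta f}$, $1/\abs{\partial_\zeta f}$, $\gamma$ and $\beta$ do \emph{not} give properness on the infinite strip: a sequence of solutions can satisfy all of these uniformly while the disturbance translates to $\xi\to\infty$, so that no subsequence converges in the solitary-wave space $\W$. The paper excludes this by an equidecay dichotomy: a bounded non-compact sequence yields, after translations, a nontrivial \emph{monotone irrotational front}, which does not exist for gravity waves beneath air; note that this makes the monotonicity of part (c) an essential ingredient of part (b), not a separate qualitative statement as in your outline (your open/closed Hopf-lemma argument for (c) itself, including the excised disc near the vortex, is essentially the paper's). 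Loss of Fredholmness is removed by a limiting constant-coefficient argument plus a bordering lemma, which you do not address. Finally, for $F^2>2$ no conserved quantity or flow-force invariant is needed: by the symmetry $s\mapsto -s$ and $s\beta(s)>0$ near $s=0$, a closed loop forces $\beta$ to vanish at a nontrivial solution, where symmetry also forces $\kappa=0$, i.e.\ a nontrivial irrotational solitary wave; since such waves obey the classical bound $F^2<2$, loops are impossible for $F^2\geq 2$. Your proposed invariant is left unidentified and would have to be constructed from scratch, whereas the conversion of conformal degeneration and $\beta\to\pm1$ into \eqref{intro theorem blowup alternative} is handled in the paper by nonlinear a priori estimates (harmonicity of $\log\abs{f_\zeta}$, the Bernoulli relation, and nonlinear Schauder theory), broadly in the direction you sketch.
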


This result is proved through a global bifurcation-theoretic argument that is outlined in the next subsection and carried out in~\cref{local bifurcation section,global point vortex section}. The limiting behavior along $\Curve$ described in~\eqref{intro theorem blowup alternative} can be understood as follows: if the curve is not a closed loop, then as we follow it either the conformal description of the domain degenerates --- corresponding to the blowup of $\abs{\partial_\zeta f(s)}$ or $1/\abs{\partial_\zeta f(s)}$ --- or the vortex strength is unbounded. Conformal degeneracy is expected to be accompanied by a loss of regularity for the free boundary, such as the development of a corner. Of course, both of these limiting scenarios can occur simultaneously. Note also that, a priori, it is possible to have $\abs{\beta(s)} \to 1$, meaning that the vortex approaches the surface in the conformal domain. In fact, this situation is not observed numerically (for finite $F^2$), but we show that it would nevertheless coincide with the blowup in~\eqref{intro theorem blowup alternative}. The leading-order form of waves close to the point of bifurcation are given in~\eqref{small amplitude asymptotics}.

It is intriguing to ask what shape the free surface limits to as one traverses $\Curve$. In~\cref{numerics section}, we address this question numerically. For simplicity, we consider periodic waves so as to have a compact computational domain. The solutions we obtain have quite localized free boundary deflections, and thus it is reasonable to expect that they are qualitatively the same as solitary waves when the period is taken sufficiently large. For supercritical $F$ close to $1$, the limiting behavior is similar to that for irrotational gravity waves: the solutions appear to approach a wave of greatest height with a corner at its crest. On the other hand, with moderate or large Froude number, we find that the waves overturn with the surface becoming nearly circular. This limiting form agrees with the exact solutions for the gravity-less case~\cite{crowdy2023exact} and earlier computations of weak gravity and point vortex forced waves~\cite{doak2017solitary}.

The existence of overhanging gravity waves was first predicted by a series of numerical papers in the 1980s, where they were found to occur in the presence of constant vorticity~\cite{dasilva1988steep} and in two-fluid systems with a vortex sheet~\cite{pullin1988finite,turner1988broadening}. Rigorously constructing a global branch of solutions connecting trivial shear flows to overhanging waves remains among the largest open problems in the field. Up to this point, authors have succeeded in proving the existence of families that potentially overturn for the case of periodic waves with constant vorticity~\cite{constantin2016global}, general vorticity~\cite{wahlen2022critical}, or linear density stratification~\cite{haziot2021stratified}; for solitary waves with constant vorticity~\cite{haziot2023large}; and for fronts in two-layer systems~\cite{chen2023global}. These are on par with our result in the sense that numerical evidence indicates that overhanging waves are reached, but no analytical argument has yet been given. Very recently, global branches of periodic waves with constant vorticity, definitively including overhanging waves, have been constructed for large Froude numbers~\cite{carvalho2023gravity}, or equivalently for small gravity. These solutions are obtained by perturbing an explicit family of solutions to the zero-gravity problem with $F=\infty$~\cite{hur2020exact,hur2022overhanging}.

The closest antecedents to~\cref{intro point vortex theorem} are the constructions of solitary capillary-gravity waves with a submerged point vortex by~\textcite{shatah2013travelling}, who considered the infinite-depth case, and by~\textcite{varholm2016solitary}, who studied the analogous class of waves in finite-depth water. Both of these papers rely crucially on surface tension, however, and the solitary waves they obtain are small-amplitude and have vortex strength and wave speed close to $0$. Essentially, they use an implicit function theorem argument where the starting trivial solution is a stationary wave with a zero-strength point vortex at an $O(1)$ distance from the boundary, and a mirror vortex in the air region. With surface tension, one finds that the linearized problem at such a configuration is an isomorphism. By contrast, for gravity solitary waves, the linearized problem is invertible if and only if the waves are fast moving in the sense that $F^2 > 1$. Based on the situation in the planar case, it seems difficult to arrange for this to occur if the vortex pair is far separated and weak. For that reason, in proving~\cref{intro point vortex theorem}, we arrange that the vortex and mirror vortex are superposed, but still have zero strength, at the point of bifurcation. While the same methodology could be generalized to treat both small- and large-amplitude capillary-gravity waves, the monotonicity property~\eqref{intro monotonicity} is not expected to be preserved along the curve, which would lead to a larger set of alternatives than what we find in~\eqref{intro theorem blowup alternative} for gravity waves.

While there are no known explicit solutions to wave-borne vortex problems such as~\eqref{intro Euler-Kirchhoff-Helmholtz} when the effect of gravity is included, things are dramatically different in the absence of gravity. Formally, this amounts to setting $F=\infty$, eliminating the inhomogeneous term $y/F^2$ in~\eqref{intro dynamic Euler} so that the magnitude of the fluid velocity is constant along the surface $\fluidS$. \citeauthor{crowdy2010steady}~\cite{crowdy2010steady} discovered a family of periodic waves with one point vortex per period and a nonzero constant background vorticity, whose fluid velocity vanishes identically along the surface. Later, \citeauthor{crowdy2014hollow}~\cite{crowdy2014hollow} found a second family of periodic waves, this time with nonzero velocity along the surface and no constant background vorticity. More recently, \citeauthor{crowdy2023exact}~\cite{crowdy2023exact} put the above solutions into a unified framework, which also encompasses the solutions~\cite{hur2020exact} with constant vorticity and no point vortices. The central idea is that, in the absence of any gravity or surface tension, the complex velocity field $\mf{u} - i \mf{v}$ must be given explicitly in terms of the so-called Schwarz function associated to the fluid domain $\fluidD$. If we assume that $\fluidD$ is given in terms of an explicit conformal mapping, then this Schwarz function is similarly explicit. To illustrate the power of the method,~\cite{crowdy2023exact} introduces several completely new solution families. In particular, these includes solutions to the $F=\infty$ limit of our solitary wave problem~\eqref{intro Euler-Kirchhoff-Helmholtz}; see~\cite[Section 6]{crowdy2023exact} and \cref{exact zero gravity wave proposition} below. It would be interesting to see if these waves could be continued to large but finite $F$ using the methods of~\cite{carvalho2023gravity,hur2022overhanging}. Subsequent work building on~\cite{crowdy2023exact} includes \textcite{keeler2023exact}, which generalizes~\cite{crowdy2010steady} to the case of two point vortices per period. Unfortunately, the Schwarz function machinery does not obviously extend to the $O(1)$ gravity wave setting that we study here.

Finally, it is important to mention the early works of~\textcite{terkrikorov1958vortex,filippov1960vortex,filippov1961motion}, who studied the related problem of traveling gravity waves with point vortex \emph{forcing}. That is, they look at the somewhat simpler problem where one does not impose the condition~\eqref{intro point vortex advection} that the vortex is at equilibrium. \textcite{gurevich1964vortex,shaw1972note} investigated the gravity-less version of the same system, which admits explicit overhanging solutions. Physically, point vortex forcing may for instance model waves interacting with an immersed body being dragged through the bulk. In the present paper, we use the term \emph{wave-borne vortices} to make clear that the vortices we are considering are instead carried along by the wave. Interestingly, in~\cite{terkrikorov1958vortex}~\citeauthor{terkrikorov1958vortex} looked specifically at the possibility of a bifurcation curve of point vortex forced waves connecting two irrotational waves, which is analogous to the closed loop alternative in~\cref{intro point vortex theorem}.

Our second main result concerns solutions to the wave-borne hollow vortex problem~\eqref{intro hollow vortex problem}. It states that for a generic subset of the waves on $\Curve$, it is possible to desingularize the point vortex into a hollow vortex. More precisely, we prove the following.

\begin{theorem}[Wave-borne hollow vortices]
    \label{intro hollow vortex theorem}
    Assume $F^2 > 1$ and let $\Curve$ be the curve of wave-borne point vortices furnished by~\cref{intro point vortex theorem}. For all parameter values $s_0 \in \R$ outside of a discrete set, there is a real-analytic curve $\Kurve = \Kurve_{s_0}$ of solutions to the solitary gravity wave-borne hollow vortex problem~\eqref{intro hollow vortex problem} admitting the parameterization
    \[
        \Kurve_{s_0} \ceq \brac[\big]{ (f^\rho, \gamma^\rho, Q^\rho, \rho ) : \abs{\rho} < \rho_1 } ,
    \]
    where $\rho_1 =\rho_1(s_0)>0$.

    The curve $\Kurve_{s_0}$ bifurcates from the point vortex solution on $\Curve$ at parameter value $s = s_0$ in that
    \[
        \parn[\big]{f^0,\gamma^0} = \parn[\big]{ f(s_0), \gamma\parn[\big]{\kappa^0, \beta^0}},
    \]
    where $\beta^0 \ceq \beta(s_0)$, $\kappa^0 \ceq \kappa(s_0)$. Moreover, the conformal mapping has the leading-order form
    \begin{subequations}
        \label{hollow vortex leading order}
        \begin{equation}
            \label{hollow vortex leading order f}
            \begin{aligned}
                f^\rho                                           & = f^0+ O(\rho^2) \quad \text{in } C^{k+3+\alpha}(\ol{\confD_\rho}; \C)                                                                                                           \\
                \frac{1}{2\pi i} \int_{\confSv} f^\rho \dee\zeta & = \rho^3 \parn*{ \pi^2 f_\zeta^0(i\beta^0) \parn[\bigg]{ \frac{(\kappa^0)^2}{8} - \frac{1-3 \csc^2{(\pi \beta^0)}}{12} } +\frac{f_{\zeta\zeta\zeta}^0(i\beta^0)}{4} }+O(\rho^4),
            \end{aligned}
        \end{equation}
        while the circulation and corresponding Bernoulli constant $q^\rho$ satisfy
        \begin{equation}
            \label{hollow vortex leading order parameters}
            \gamma^\rho = \gamma^0+O(\rho^2) \qquad
            \rho^2 q^\rho =
            \frac{(\gamma^0)^2}{4f_\zeta^0(i\beta^0)^2} \parn[\bigg]{ \frac 1{\pi^2} -\rho^2 \frac{(\kappa^0)^2}{4} +O(\rho^4)}.
        \end{equation}
    \end{subequations}
\end{theorem}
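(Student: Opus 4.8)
The plan is to recast the wave-borne hollow vortex problem~\eqref{intro hollow vortex problem} as an abstract operator equation $\Phi(f,Q,\gamma,\beta,\rho)=0$ posed on the conformal domain $\confD_\rho$, assembling $\Phi$ from the dynamic condition on $\fluidS = f(\confSs)$, the dynamic condition on $\partial\fluidV = f(\confSv)$ written in terms of the \emph{normalized} Bernoulli constant $Q$, and the circulation constraint~\eqref{intro hollow circulation}, with the velocity field supplied by the hollow-vortex complex potential $W = W(\placeholder\,;\rho,\gamma,\beta)$ of~\cref{complex potential lemma}. The unknowns $(f,Q,\gamma,\beta)$ are to live in fixed $C^{k+3+\alpha}$-type Hölder spaces after straightening the $\rho$-dependent domain $\confD_\rho$ onto a fixed reference strip-minus-disk by an explicit $\rho$-dependent change of variables (equivalently, rescaling the hole to unit size). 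Since the raw Bernoulli constant $q$ diverges like $\rho^{-2}$, the central structural point --- to be verified by unwinding the definitions --- is that $Q$ (with $\rho Q = \rho^2 q + O(1)$, $Q = O(\rho)$) absorbs this singularity exactly, so that $\Phi$ extends real-analytically across $\rho = 0$; moreover at $\rho = 0$ the hole collapses to $i\beta$ and $\Phi(\placeholder,0,\gamma,\beta,0) = 0$ reduces precisely to the wave-borne point vortex operator equation of~\cref{point vortex abstract operator section} (with $\gamma = \gamma(\kappa,\beta)$), whose solution set near $\Curve$ is $\Curve$ itself. Simultaneously, the dynamic condition~\eqref{intro dynamic hollow vortex} near $\partial\fluidV$ is dominated by $\tfrac12\abs{\mf u - i\mf v}^2$, which on the near-circle $\confSv$ behaves like a nonzero multiple of $\gamma^2/(\rho^2 f_\zeta(i\beta)^2)$ by the point-vortex asymptotics~\eqref{intro point vortex advection}; requiring this to balance $q$ pins down the $O(1)$ part of $Q$ and shows that $\partial\fluidV$ is, to leading order, a circle. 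This is the base point about which we bifurcate.

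Next I would linearize $\Phi$ at the base point $\parn[\big]{f(s_0),0,\gamma^0,\beta^0,0}$ and show that the partial derivative in $(f,Q,\gamma,\beta)$ is an isomorphism for every $s_0$ outside a discrete set. Up to lower-order coupling this derivative splits into two blocks: the linearized wave-borne point vortex operator acting on $(f,\beta,\kappa)$, and a new block from the $\confSv$-dynamic condition and the circulation constraint, which is a relatively compact perturbation of an invertible constant-coefficient operator on the circle and which determines $Q$, $\gamma$, and the extra Fourier modes of $\partial\fluidV$. Fredholmness of index zero then follows exactly as for the point vortex problem, so invertibility reduces to nondegeneracy of the point-vortex block. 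Because the global curve $\Curve$ produced in~\cref{global point vortex section} is, by the analytic global-bifurcation machinery, locally a real-analytic arc off at most finitely many parameters on any compact set, the relevant Lyapunov--Schmidt determinant is real-analytic in $s_0$ along each analytic piece and hence vanishes only on a discrete set; this, together with the junction points and the points where $\gamma(s_0) = 0$ (at which desingularization is vacuous), yields the discrete exceptional set of the statement, with the radius of validity $\rho_1(s_0)$ shrinking as $s_0$ approaches it.

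With the isomorphism in hand, the analytic implicit function theorem produces a unique real-analytic curve $\rho \mapsto \parn[\big]{f^\rho,Q^\rho,\gamma^\rho}$ through the base point for $\abs\rho < \rho_1(s_0)$, which is $\Kurve_{s_0}$; undoing the normalization recovers $q^\rho$. The expansions~\eqref{hollow vortex leading order} follow by Taylor-expanding the solved equation in $\rho$: the first-order corrections vanish --- by the $\rho \mapsto -\rho$ symmetry of the configuration, equivalently because the leading image-system correction is quadratic --- giving $f^\rho = f^0 + O(\rho^2)$ and $\gamma^\rho = \gamma^0 + O(\rho^2)$. Since $f^0$ is holomorphic at $i\beta^0$, the core moment satisfies $\frac{1}{2\pi i}\int_{\confSv} f^\rho\,\dee\zeta = \frac{1}{2\pi i}\int_{\confSv}(f^\rho - f^0)\,\dee\zeta$, which is automatically $O(\rho^3)$; extracting its leading coefficient requires the $O(\rho^2)$ correction to $f^\rho$ together with the $O(\rho^2)$ deviation of $\partial\fluidV$ from a circle, which is governed --- through the local Laurent expansion of $W_\zeta$ about $i\beta^0$ (whose subleading coefficients carry the $\csc^2(\pi\beta^0)$ dependence) and the advection identity $\partial_\zeta^2 f(i\beta)/\partial_\zeta f(i\beta) = \pi i\kappa$ of~\eqref{intro f vortex advection equation} --- by $\kappa^0$ and $f^0_{\zeta\zeta\zeta}(i\beta^0)$, producing the displayed formula; the same local analysis of~\eqref{intro dynamic hollow vortex} yields the $\rho^2 q^\rho$ expansion.

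The step I expect to be the main obstacle is making the functional-analytic setup genuinely uniform across $\rho = 0$: one must choose the straightening map and the function spaces so that no spurious $\rho$-singularities are introduced, expand the hollow-vortex potential $W$ of~\cref{complex potential lemma} --- which has no closed form --- in $\rho$ with remainder estimates uniform on $\ol{\confD_\rho}$, and verify that the singular $\rho^{-2}$ behavior of $q$ is captured \emph{exactly} by the explicit normalizing factor so that $\Phi$ is truly real-analytic, not merely continuous, at $\rho = 0$. A secondary but essential point is establishing the discreteness of the exceptional set, which hinges on the local real-analyticity of $\Curve$.
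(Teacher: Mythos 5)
Your overall skeleton does parallel the paper's proof (an operator equation real-analytic across $\rho=0$, a normalized Bernoulli constant, a block structure whose invertibility reduces to non-degeneracy of the point-vortex linearization, discreteness of the exceptional set from the distinguished-arc structure of analytic global bifurcation, and Taylor expansion in $\rho$ for the asymptotics), but there is a genuine gap at precisely the step you yourself flag as the main obstacle, and it is the heart of the matter. Straightening $\confD_\rho$ onto a fixed ``strip-minus-disk'' by a $\rho$-dependent change of variables cannot give uniformity down to $\rho=0$: the limiting domain is a punctured strip, not a strip minus a disk, so any diffeomorphism rescaling the hole to unit size degenerates as $\rho\to0$ (its derivatives blow up like $1/\rho$ near the hole, or it pushes the outer boundary off to infinity in the rescaled chart), and real-analyticity of your operator in fixed Hölder spaces does not follow. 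The paper's resolution is a two-scale, layer-potential ansatz rather than a domain straightening: $f=f^0+\surf{f}+\rho^2\mc{Z}^\rho[\mu]$ with $\surf{f}$ holomorphic on the \emph{full} strip and $\mu$ a density on the fixed unit circle (the operator $\mc{Z}^\rho$ of~\eqref{wave Z formula}), and likewise $W=W_0+\rho\mc{Z}^\rho\nu+\rho^2\surf{W}$, where the density $\nu$ is itself produced by a separate implicit-function-theorem argument for a nonlocal equation on $\T$ (\cref{complex potential lemma}); analyticity in $\rho$ is then read off from expansions like~\eqref{Z1 limit}--\eqref{Z2 limit}. Moreover, the normalization of the vortex-boundary condition is finer than multiplying by $\rho^2$: in $\G_2$ one subtracts the solution-dependent quantity $\gamma^2/\parn[\big]{4\pi^2\abs{f^0_\zeta(i\beta)+\partial_\zeta\surf{f}(i\beta)}^2}$ and then divides by $\rho$, which is what makes the expansion~\eqref{parenthetical expansion G2} regular at $\rho=0$, forces $D_\rho\G_1(u^0,0)=0$, and delivers the clean asymptotics (the paper gets $f^\rho=f^0+O(\rho^2)$ and $\gamma^\rho=\gamma^0+O(\rho^2)$ from that computation and local uniqueness of the linearized system, not from a $\rho\mapsto-\rho$ symmetry). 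Without some such device your map is at best continuous at $\rho=0$ and the analytic implicit function theorem cannot be invoked.

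There is also a structural flaw in your choice of unknowns. You keep $\beta$ free and add the circulation constraint~\eqref{intro hollow circulation} as an equation while using the potential of~\cref{complex potential lemma}, in which the circulation $\gamma$ is already prescribed; that equation is then identically satisfied and carries no information. More seriously, with $\beta$ an unknown the zero set of your operator at $\rho=0$ contains (a lift of) the whole curve $\Curve$ near $s_0$, since the $\rho=0$ problem is exactly the point-vortex problem and along a distinguished arc $\beta$ genuinely varies; differentiating along $\Curve$ therefore produces a nontrivial kernel element of your linearization, so it is never an isomorphism and the plain implicit function theorem fails in your formulation. The paper avoids this by freezing $\beta=\beta^0$, untethering $\gamma$ from $(\kappa,\beta)$, and letting the $\proj_1$ Fourier mode of the vortex-boundary Bernoulli condition play the role of the Helmholtz--Kirchhoff equilibrium equation, with $\gamma$ (equivalently $\dot\kappa$ via~\eqref{kappa dot to gamma dot}) as the paired scalar unknown, $\mu$ normalized to mean zero against the $\proj_{>1}$ modes, and $Q$ against the $\proj_0$ mode. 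Your remarks on the discrete exceptional set (including the points where $\gamma(s_0)=0$, at which the circle block, whose coefficients carry a factor $(\gamma^0)^2$, degenerates) and your outline of the moment computation are consistent with the paper, but as written the central analytic construction and the correct choice of unknowns are missing.
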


We emphasize that, because this result can be applied at a generic solution on $\Curve$, the resulting waves can be of large amplitude. Indeed, if $\Curve$ contains waves with an overturned free surface, as suggested by the numerics, then \cref{intro hollow vortex theorem} implies the existence of cyclopean waves, as depicted in \cref{cyclopean wave figure}. Note that the curve $\Kurve_{s_0}$ contains waves with small hollow vortices. While we do not pursue it here, it is possible to combine the argument used to prove~\cref{intro point vortex theorem} with that for planar hollow vortices in~\cite{chen2023desingularization} to extend the curve $\Kurve_{s_0}$, thereby obtaining solitary waves with \emph{large} submerged hollow vortices. As in~\eqref{intro theorem blowup alternative}, the expected limiting behavior along this global curve would be either unboundedness of $\gamma$, $Q$, or the development of a surface singularity or self-intersection on either $\fluidS$ or $\partial\fluidV$.

\cref{intro hollow vortex theorem} represents the first construction of wave-borne hollow vortices. In fact, to the best of our knowledge, it is the first existence result for hollow vortices in the presence of gravity. Translating hollow vortex pairs in the plane were first constructed by~\textcite{pocklington1894configuration}. A modern treatment of the same system based on Schottky--Klein prime functions was given by~\textcite{crowdy2013translating}, and a similar methodology was used by~\textcite{green2015analytical} to study hollow vortex pairs in a channel. However, as mentioned above, this complex function theoretic approach cannot be applied directly when gravity is present. By contrast, the implicit function theorem argument we use, like that in~\cite{chen2023desingularization}, is completely indifferent to even $O(1)$ gravity. Indeed, looking at the dynamic condition on the vortex boundary~\eqref{intro dynamic hollow vortex}, we see that the kinetic energy term $(\mf{u}^2+\mf{v}^2)/2$ is $O(1/\rho^2)$; for the linearized problem, it will totally overpower the potential energy term.

With some effort, a similar desingularization technique can also be used to prove the existence of solitary gravity waves with a submerged vortex patch, that is, a bounded open region of nonzero vorticity. The wave-borne vortex patch problem is simpler than the wave-borne hollow vortex problem, in the sense that the boundary of the patch is a streamline but not at constant pressure. The fully nonlinear dynamic boundary condition~\eqref{intro dynamic hollow vortex} need therefore not be imposed there. On the other hand, the interior of the patch is part of the fluid domain, and not simply air, so we must determine the streamlines there as well. For general vorticity, this task amounts to solving a semilinear elliptic free boundary problem that is coupled to the flow in the exterior of the patch. Waves of this type are the subject of a forthcoming work.

\subsection{Outline of the proof and plan of the article}

In~\cref{local bifurcation section}, we begin the argument leading to~\cref{intro point vortex theorem} by first constructing a curve of small-amplitude solitary gravity waves with a submerged point vortex. Somewhat whimsically, one can imagine this being done by injecting a point vortex into the bulk of the fluid through the bed, and an accompanying counter-rotating mirror vortex below the bed. The pair of vortices will then tend to translate together, and so it remains only to couple their motion to that of the wave. We find that when the Froude number is supercritical, the linearization of~\eqref{intro Euler-Kirchhoff-Helmholtz} at the trivial, irrotational uniform flow is an isomorphism between the appropriate spaces. The implicit function theorem therefore furnishes a local curve of solutions $\Curve_\loc$, which will contain waves whose free surface is nearly flat, and carrying a point vortex that is very close to the bed and has very weak vortex strength. In~\cref{global point vortex section}, we use techniques from analytic global bifurcation theory to continue $\Curve_\loc$ into the large-amplitude regime, obtaining waves with potentially strong vortices that may be $O(1)$ removed from the bed. Specifically, we make use of a version of this general theory developed in~\cite{chen2018existence} to treat problems set on unbounded domains. By establishing a certain monotonicity property, we are able to rule out several undesirable ``loss of compactness'' alternatives for the limiting behavior of the solution curve. Finally, through nonlinear a priori estimates, we are able to show that either $\Curve$ is a closed loop, or the blowup scenario~\eqref{intro theorem blowup alternative} must occur.

In~\cref{numerics section}, we complement this analysis with numerical computations for periodic waves. In fact, the proof of~\cref{intro point vortex theorem} can be adapted to the periodic regime, though we leave that to future work in the interest of brevity. For a sufficiently large period, our computed waves show close agreement with the explicit solitary solutions available in the limit $F=\infty$~\cite{crowdy2023exact}. We also show that, after a suitable rescaling, the surfaces of these explicit solutions converge to a circle centered at the point vortex, sitting on top of a line, as $\beta \to 1$.

\cref{hollow vortex section} is devoted to the proof of~\cref{intro hollow vortex theorem} concerning the existence of solitary wave-borne hollow vortices. As described above, this is done by taking a wave-borne point vortex on $\Curve$, then looking for nearby water waves having a submerged hollow vortex such that $\partial\fluidV$ is approximately a streamline of the starting velocity field. The main challenge in adapting the desingularization machinery from~\cite{chen2023desingularization} to the present setting is naturally to account for the wave-vortex interaction. Through careful asymptotic analysis, we find a formulation of the system as an abstract operator equation $\G(u,\rho) = 0$ for a new unknown $u$. Crucially, we prove that $\G$ is real analytic, and that this equation agrees with the wave-borne hollow vortex problem~\eqref{intro hollow vortex problem} when $\rho > 0$ and recovers the wave-borne point vortex problem~\eqref{intro Euler-Kirchhoff-Helmholtz} at $\rho = 0$. The linearized operator $D_u\G(u^0,0)$ at a given wave-borne point vortex solution $u^0$ can be manipulated into $2 \times 2$ block-diagonal form, with one block corresponding to the linearization of the wave-borne point vortex problem, and the other relating to the linearized planar hollow vortex problem. This simple structure enables us to confirm that generically $D_u\G(u^0,0)$ is an isomorphism, so that the curve $\Kurve$ can be constructed using the implicit function theorem.

Finally, for the convenience of the reader,~\cref{quoted results appendix} collects some background results that are used at various stages throughout the paper.

\subsection{Notation}
Here we lay out some notational conventions for the remainder of the paper. We will often make use of the Wirtinger derivative operator
\[
    \partial_z \ceq \tfrac{1}{2} \parn*{ \partial_x - i \partial_y },
\]
which may also be denoted as $f_z = \partial_z f$ when there is no risk of confusion.
Primes are reserved for complex (total) derivatives of functions with domain $\T$, the unit circle in the complex plane; if $f$ is a function of $\tau = e^{i\theta} \in \T$, then $f' = \partial_\tau f = -ie^{-i\theta} df/d\theta$.

Let $D$ be a subset of $\R^n$ or $\C^n$. For each integer $\ell \geq 0$ and $\alpha \in (0,1)$, we denote by $C^{\ell+\alpha}(D)$ the usual space of real-valued Hölder continuous functions of order $\ell$, exponent $\alpha$, and having domain $D$. When $D$ is unbounded, define $C_0^{\ell+\alpha}(D)$ to be the subspace of $u \in C^{\ell+\alpha}(D)$ all of whose partials $\partial^\mu u$ of order $\abs\mu \le \ell$ vanish uniformly at infinity. In the specific case where $D \subset \C$, we denote by $C_\even^{\ell+\alpha}(D)$ those functions that are even across the imaginary axis and odd across the real axis. Likewise, $C_{0,\even}^{\ell+\alpha}(D) \ceq C_0^{\ell+\alpha}(D) \cap C_\even^{\ell+\alpha}(D)$.

Every $\varphi \in C^{\ell+\alpha}(\T)$ admits a unique power series representation
\[
    \varphi(\tau) = \sum_{m \in \Z} \wh{\varphi}_m \tau^m, \qquad \text{with}\qquad \wh{\varphi}_m \ceq \frac{1}{2\pi} \int_{\T} \varphi(\tau) \tau^{-m} \dee\theta,
\]
and we write $\tau = e^{i\theta}$ when $\T$ is parameterized by arc length. Note that because these are real-valued functions, the coefficients must necessarily obey $\wh{\varphi}_{-m} = \ol{\wh{\varphi}_m}$. For $m \geq 0$, let $\proj_m \colon C^{\ell+\alpha}(\T) \to C^{\ell+\alpha}(\T)$ denote the projection
\[
    (\proj_m\varphi)(\tau) \ceq
    \begin{cases}
        \wh{\varphi}_m \tau^m + \wh{\varphi}_{-m} \tau^{-m} & \text{if } m \neq 0 \\
        \wh{\varphi}_0                                      & \text{if } m = 0,
    \end{cases}
\]
and set $\proj_{\leq m} \ceq \proj_0 + \cdots + \proj_m$ and $\proj_{> m} \ceq 1-\proj_{\leq m}$. We will often work with the space $\mo{C}^{\ell+\alpha}(\T) \ceq \proj_{> 0} C^{\ell+\alpha}(\T)$ of mean $0$ elements of $C^{\ell+\alpha}(\T)$.

\section{Small solitary wave-borne point vortices}
\label{local bifurcation section}

The main purpose of this section is to establish the existence of small-amplitude solitary gravity waves carrying point vortices. We will use an implicit function theorem argument that takes advantage of the fact that the linearized dynamic condition on the free surface $\confS$ is invertible provided the Froude number is supercritical.

\subsection{Abstract operator equation}
\label{point vortex abstract operator section}

We begin by fixing the functional analytic setting, rewriting the wave-borne point vortex system described in~\cref{intro point vortex problem section} as an abstract operator equation to which the implicit function can eventually be applied. Because the full conformal map can be reconstructed from its imaginary part, it will be convenient to recast the problem in terms of the unknown $w$ defined by
\[
    w(\zeta) \ceq \im\parn*{ f(\zeta)- \zeta} = \im{f(\zeta)} - \eta.
\]
Thus, $w \in C^{k+3+\alpha}(\ol{\confD})$ is a real-valued harmonic function that vanishes as $\xi \to \pm\infty$. Naturally, it inherits symmetry properties from $f$ due to~\eqref{symmetry f}. With that in mind, we introduce the closed subspace
\begin{equation}
    \label{definition W space}
    \W \ceq \brac*{ w \in C_{0,\even}^{k+3+\alpha}(\ol{\confD}) : \text{$\Delta w = 0$ in $\confD$} },
\end{equation}
which is the natural class of $w$ for us to consider.

Now, we must rewrite the Euler equations~\eqref{intro Euler-Kirchhoff-Helmholtz} in terms of $w$. Using $\mf{u}-i\mf{v} = (\partial_\zeta W/\partial_\zeta f) \circ f^{-1}$, the Helmholtz--Kirchhoff condition~\eqref{intro point vortex advection} can be written
\begin{equation}
    \label{conformal helmholtz kirchhoff}
    \frac{W_\zeta}{f_\zeta} - \frac{\gamma}{2\pi i} \frac{1}{f - i b} = O(\zeta - i \beta) \qquad \text{as } \zeta \to i \beta.
\end{equation}
From~\eqref{definition complex potential} we compute directly that
\begin{equation}
    \label{solitary W_zeta expansion}
    W_\zeta(\zeta) = \frac{\gamma}{2\pi i} \frac{1}{\zeta-i \beta} + \parn[\bigg]{1 + \frac{\gamma}{4} \cot{(\pi \beta)}} - \frac{\pi \gamma}{24i} \parn[\big]{ 2 + 3 \cot^2{(\pi \beta)} } (\zeta-i\beta) + O\parn[\big]{ (\zeta-i\beta)^2 },
\end{equation}
whence performing a Laurent expansion of the left-hand side of~\eqref{conformal helmholtz kirchhoff} yields
\begin{equation}
    \label{solitary laurent expansion}
    \frac{W_\zeta(\zeta)}{f_\zeta(\zeta)} - \frac{\gamma}{2\pi i} \frac{1}{f(\zeta) - ib} = \frac{1}{f_\zeta(i\beta)} \parn[\bigg]{ 1+ \frac{\gamma}{4} \cot{(\pi \beta)} - \frac{\gamma}{4\pi i} \frac{f_{\zeta\zeta}(i\beta)}{f_\zeta(i\beta)} } + O( \zeta-i\beta )
\end{equation}
as $\zeta \to i\beta$. From this, it is evident that~\eqref{conformal helmholtz kirchhoff} is satisfied if and only if the constant term on the right-hand side of~\eqref{solitary laurent expansion} vanishes. This occurs precisely when the point vortex advection equation~\eqref{intro f vortex advection equation} holds and $\gamma$ is given by~\eqref{definition gamma}. Via the Cauchy--Riemann equations,~\eqref{intro f vortex advection equation} can equivalently be stated as
\begin{subequations}
    \label{w equation}
    \begin{equation}
        \label{vortex equation}
        \rest[\Big]{\frac{w_{\xi\xi}}{1+ w_\eta}}_{\zeta = i\beta} = \pi \kappa
    \end{equation}
    in terms of $w$. Likewise, the formula~\eqref{definition complex potential} for $W$ allows us to rewrite the Bernoulli condition~\eqref{intro dynamic Euler} quite concisely as
    \begin{equation}
        \label{Bernoulli condition}
        \frac{1}{2} \frac{a^2}{w_\xi^2 + (1+w_\eta)^2} + \frac{1}{F^2} w = \frac{1}{2} \qquad \text{on } \confS,
    \end{equation}
\end{subequations}
where $a = a(\xi;\kappa,\beta)$ is given explicitly by
\begin{equation}
    \label{definition a}
    a(\xi;\kappa,\beta) \ceq W_\zeta(\xi+i; \gamma(\kappa,\beta),\beta) = 1-\frac{\gamma(\kappa,\beta)}{2} \frac{\sin(\pi \beta)}{\cosh(\pi\xi)+\cos(\pi\beta)}.
\end{equation}

In view of the above discussion, in the conformal reformulation~\eqref{w equation},
\begin{equation}
    \label{definition u}
    u \ceq (w,\kappa)
\end{equation}
will serve as the unknown with the conformal altitude $\beta$ as the parameter. We therefore introduce the space $\X \ceq \W \times \R$ and open set
\begin{equation}
    \label{definition of O nbhd}
    \Open \ceq \brac[\bigg]{ (w,\kappa,\beta) \in \W \times \R^2 : \quad
        \begin{gathered}
            w_\xi^2+(1+w_\eta)^2 > 0 \quad \text{in }\ol{\confD},\\
            \abs{\beta} < 1, \quad \text{and } \quad \cos(\pi\beta) > \kappa \sin(\pi\beta)
        \end{gathered}
    },
\end{equation}
preventing degeneracy of the formulation, and ensuring that it is equivalent to the physical problem. In particular, the final requirement on $\kappa$ and $\beta$ makes sure that $\gamma(\kappa,\beta)$ in~\eqref{definition gamma} is well-defined, and that we remain in the same connected component of the domain of $\gamma$ as the trivial solution. Moreover, a consequence is that
\begin{equation}
    \label{gamma inequality}
    \gamma(\kappa,\beta) \sin(\pi \beta) \le 0, \quad
    \text{ with equality if and only } \beta = 0,
\end{equation}
and therefore
\begin{equation}
    \label{a inequality}
    a(\placeholder; \kappa,\beta) \ge 1,
\end{equation}
for all $(w,\kappa,\beta) \in \Open$.

In summary, the water wave problem with a submerged vortex can be rewritten as the abstract operator equation
\begin{equation}
    \label{abstract operator equation}
    \F(u,\beta) = 0,
\end{equation}
where $\F = (\F_1,\F_2) : \Open \subset (\X\times \R) \to \Y$ is the real-analytic map defined by
\begin{equation}
    \label{definition F}
    \begin{aligned}
        \F_1(w,\kappa,\beta) & \ceq
        \rest*{\parn*{\frac{1}{2} \frac{a^2}{w_\xi^2 + (1+w_\eta)^2} + \frac{1}{F^2} w - \frac{1}{2}}}_\confS          \\
        \F_2(w,\kappa,\beta) & \ceq \frac{1}{\pi} \rest[\Big]{\frac{w_{\xi\xi}}{1+ w_\eta}}_{\zeta = i\beta} - \kappa,
    \end{aligned}
\end{equation}
and having codomain
\[
    \Y = \Y_1 \times \Y_2 \ceq C_{0,\even}^{k+2+\alpha}(\confS) \times \R.
\]
Recall that the subscript ``e'' indicates that the elements of $\Y_1$ are even in $\xi$. Notice also from~\eqref{definition gamma} that $\gamma(-\kappa,-\beta) = -\gamma(\kappa,\beta)$, and hence $a(\placeholder; -\kappa,-\beta) = a(\placeholder; \kappa,\beta)$. The nonlinear operator therefore exhibits the symmetries
\begin{equation}
    \label{symmetry F}
    \begin{aligned}
        \F_1(w,\kappa,\beta) & = \F_1(w,-\kappa,-\beta)   \\
        \F_2(w,\kappa,\beta) & = -\F_2(w,-\kappa,-\beta),
    \end{aligned}
\end{equation}
where the second identity follows from the fact that $w_{\xi\xi}/(1+w_\eta)$ is odd in $\eta$ when $w \in \W$.

\subsection{Local bifurcation}

We can now state and prove the main result of the section, which furnishes small-amplitude solitary waves with submerged point vortices near the bed and with small vortex strength.

\begin{theorem}[Small-amplitude waves]
    \label{small-amplitude theorem}
    For any supercritical Froude number $F^2 > 1$, there exists a curve $\Curve_\loc$ of solitary gravity waves with a submerged point vortex having the following properties:
    \begin{enumerate}[label=\textup{(\alph*)}]
        \item \label{local uniqueness part} There is a neighborhood of $(0,0) \in \X \times \R$ in which $\Curve_\loc$ comprises the entire zero-set of $\F$.
        \item \label{local parameterization part} The curve admits the real-analytic parameterization
              \[
                  \Curve_\loc = \brac*{ (w^\beta,\kappa^\beta,\beta) : \abs{\beta} < \beta_0 } \subset \Open \subset \X \times \R,
              \]
              for some $\beta_0 > 0$, where $(w^0,\kappa^0) = (0,0)$. Moreover, it enjoys the symmetry
              \begin{equation}
                  \label{small amplitude symmetry}
                  (w^{-\beta}, \kappa^{-\beta}) = (w^\beta, -\kappa^\beta)
              \end{equation}
              for all $\abs{\beta} < \beta_0$.
        \item \label{leading-order part} The solutions along $\Curve_\loc$ have the leading-order form
              \begin{equation}
                  \label{small amplitude asymptotics}
                  \begin{aligned}
                      w^\beta      & = \ddot{w}\beta^2 + O(\beta^4)                                   \\
                      \kappa^\beta & = -\frac{1}{\pi} \partial_\eta^3\ddot{w}(0)\beta^3 + O(\beta^5),
                  \end{aligned}
              \end{equation}
              in $\W$ and $\R$, respectively, where $\ddot{w} \in \W$ is the unique element satisfying
              \begin{equation}
                  \label{ddot w characterization}
                  \lbrac*{
                      \begin{aligned}
                          \Delta \ddot{w}                        & = 0                                       &  & \text{in } \confD  \\
                          \ddot{w}_\eta - \frac{1}{F^2} \ddot{w} & = \pi^2 \sech^2\parn*{\frac{\pi}{2} \xi } &  & \text{on } \confS.
                      \end{aligned}
                  }
              \end{equation}
              Consequently, the vortex strength and altitude associated with $u^\beta$ satisfy
              \begin{align*}
                  \gamma^\beta & = -4 \tan(\pi \beta) + O(\beta^5)                        \\
                  b^\beta      & = \beta + \partial_\eta \ddot{w}(0)\beta^3 + O(\beta^5).
              \end{align*}
    \end{enumerate}
\end{theorem}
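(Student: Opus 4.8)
The plan is to obtain $\Curve_\loc$ from the analytic implicit function theorem applied to the abstract equation~\eqref{abstract operator equation} at the trivial solution $((0,0),0)$. One checks directly from~\eqref{definition a} that $a(\placeholder;0,0)\equiv 1$, so $\F((0,0),0)=0$ and $((0,0),0)\in\Open$, and $\F$ is real-analytic by construction. The substantive task is to show that the partial Fréchet derivative $D_u\F(0,0)\colon\X\to\Y$ is an isomorphism. Differentiating~\eqref{definition F}, and noting that $a$ depends on $\kappa$ only through terms carrying a factor $\sin(\pi\beta)$ (so $\partial_\kappa a=0$ at $\beta=0$) while $w_\xi=w_\eta=0$ when $w=0$, I expect the block-triangular form
\[
    D_u\F(0,0)[\dot w,\dot\kappa] = \parn[\bigg]{\ \rest[\big]{\parn[\big]{\tfrac{1}{F^2}\dot w - \dot w_\eta}}_{\confS},\ \ \tfrac{1}{\pi}\dot w_{\xi\xi}(0) - \dot\kappa\ }.
\]
Since the second component solves for $\dot\kappa$ once $\dot w$ is known, this map is an isomorphism if and only if $L\colon\W\to\Y_1$, $L\dot w\ceq\rest[\big]{(\tfrac1{F^2}\dot w-\dot w_\eta)}_{\confS}$, is one.

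The invertibility of $L$ is the heart of the argument, and the only place supercriticality enters. An element of $\W$ is the unique harmonic, $\eta$-odd extension to the strip of its trace $\phi\ceq\rest{\dot w}_{\confS}$, and $\rest{\dot w_\eta}_{\confS}$ is recovered from $\phi$ by the Dirichlet--Neumann-type operator with Fourier symbol $k\coth k$ in the $\xi$ variable; hence $L$ is the Fourier multiplier with symbol $\tfrac1{F^2}-k\coth k$. Because $k\coth k\ge 1$ for all real $k$, with $k\coth k\to\infty$ as $\abs k\to\infty$, this symbol is at most $\tfrac1{F^2}-1<0$ precisely when $F^2>1$ --- so it is uniformly bounded away from zero and grows linearly --- and standard Fourier-multiplier/Schauder estimates on the strip then give that $L$ is an isomorphism onto $\Y_1=C^{k+2+\alpha}_{0,\even}(\confS)$. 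This is exactly the invertibility of the linearized dynamic condition, i.e. the finite-depth dispersion relation, and it is the one genuine obstacle in the proof.

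With the isomorphism in hand, the implicit function theorem furnishes a real-analytic curve $\Curve_\loc=\{(w^\beta,\kappa^\beta,\beta):\abs\beta<\beta_0\}$ with $(w^0,\kappa^0)=(0,0)$; after shrinking $\beta_0$ it lies in $\Open$, and it exhausts the zero set of $\F$ in a neighborhood of $((0,0),0)$, which gives parts (a) and (b) except for~\eqref{small amplitude symmetry}. That symmetry is immediate from~\eqref{symmetry F}: if $(w^\beta,\kappa^\beta)$ solves $\F(\placeholder,\beta)=0$ then $(w^\beta,-\kappa^\beta)$ solves $\F(\placeholder,-\beta)=0$, so local uniqueness forces $(w^{-\beta},\kappa^{-\beta})=(w^\beta,-\kappa^\beta)$; in particular $\beta\mapsto w^\beta$ is even and $\beta\mapsto\kappa^\beta$ is odd, which is what produces the $O(\beta^4)$ and $O(\beta^5)$ remainders in part (c).

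For the leading-order forms I would write $w^\beta=\ddot w\beta^2+O(\beta^4)$ and expand~\eqref{definition F} in $\beta$. From~\eqref{definition gamma} and $\kappa^\beta=O(\beta^3)$ one finds $\gamma^\beta\ceq\gamma(\kappa^\beta,\beta)=-4\tan(\pi\beta)+O(\beta^5)$, and then~\eqref{definition a} gives $a(\placeholder;\kappa^\beta,\beta)^2=1+2\pi^2\beta^2\sech^2(\tfrac\pi2\xi)+O(\beta^4)$ on $\confS$, while $w_\xi^2+(1+w_\eta)^2=1+2\ddot w_\eta\beta^2+O(\beta^4)$; substituting into $\F_1=0$ and collecting the $\beta^2$-coefficient yields exactly the boundary problem~\eqref{ddot w characterization}, whose solution $\ddot w\in\W$ is unique because $L$ is an isomorphism. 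Next, $\F_2=0$ reads $\kappa^\beta=\tfrac1\pi\,w^\beta_{\xi\xi}(i\beta)/(1+w^\beta_\eta(i\beta))$, and since $\ddot w$ is harmonic and $\eta$-odd we have $\ddot w_{\xi\xi}=-\ddot w_{\eta\eta}$ vanishing at $\zeta=0$, so $\ddot w_{\xi\xi}(i\beta)=-\partial_\eta^3\ddot w(0)\beta+O(\beta^3)$ and hence $\kappa^\beta=-\tfrac1\pi\partial_\eta^3\ddot w(0)\beta^3+O(\beta^5)$. Finally $\gamma^\beta=-4\tan(\pi\beta)+O(\beta^5)$ as above, and since $f$ is purely imaginary on the imaginary axis by~\eqref{symmetry f}, the physical vortex altitude is $b^\beta=\beta+w^\beta(i\beta)=\beta+\partial_\eta\ddot w(0)\beta^3+O(\beta^5)$, again using the $\eta$-oddness of $\ddot w$. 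All of these expansions are routine once the isomorphism property of $D_u\F(0,0)$ --- equivalently the non-vanishing, linearly growing symbol $\tfrac1{F^2}-k\coth k$ --- has been established, which is where the hypothesis $F^2>1$ is indispensable.
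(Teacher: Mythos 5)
Your proposal is correct in substance and follows the same skeleton as the paper's proof: verify that $\F_u(0,0)$ is lower block-triangular with the linearized Bernoulli operator $\rest{(-\partial_\eta + \tfrac{1}{F^2})}_{\confS}$ and $-1$ on the diagonal, invoke the analytic implicit function theorem, get the symmetry~\eqref{small amplitude symmetry} from~\eqref{symmetry F} and local uniqueness, and then extract the expansions in part~\ref{leading-order part}. Where you genuinely diverge is in the two technical ingredients. For the invertibility of the linearized surface operator, the paper cites an elliptic-theory result based on the strict supersolution $\dot w = y$ (López-Gómez; Wheeler), whereas you compute the Fourier symbol $\tfrac1{F^2}-k\coth k$ of the Dirichlet--Neumann reduction and observe it is bounded away from zero exactly when $F^2>1$. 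That route is legitimate and arguably more transparent about where supercriticality enters, but the step ``standard Fourier-multiplier/Schauder estimates'' hides the only delicate point: on the decaying Hölder space $C_{0,\even}^{k+2+\alpha}(\confS)$ one must check that the inverse symbol (analytic in a strip about $\R$, decaying like $1/\abs{k}$) has an integrable convolution kernel and gains one derivative, so that $\W \to \Y_1$ is indeed onto; the paper's supersolution citation packages precisely this. For part~\ref{leading-order part}, the paper differentiates the abstract equation $\F(u^\beta,\beta)=0$ twice in $\beta$ and computes $\F_{\beta\beta}(0,0)=(2\pi^2\sech^2(\tfrac{\pi}{2}\xi),0)$, while you expand the nonlinear equations directly; both yield~\eqref{ddot w characterization}. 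One ordering issue in your version: oddness of $\beta\mapsto\kappa^\beta$ alone only gives $\kappa^\beta=O(\beta)$, so the bound $\kappa^\beta=O(\beta^3)$ that you feed into the expansion of $\gamma^\beta$ needs a prior justification --- e.g.\ from $\F_2=0$ together with $w^\beta=O(\beta^2)$ one gets $\kappa^\beta=O(\beta^2)$, and oddness then upgrades this to $O(\beta^3)$ (alternatively, note that the $\beta^2$-coefficient in the Bernoulli expansion is insensitive to a putative linear term in $\kappa^\beta$, so no circularity arises); the paper avoids this entirely by observing $\F_\beta(0,0)=0$, hence $\partial_\beta u^0=0$. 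With these two small points tightened, your argument is a complete and valid alternative proof.
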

\begin{proof}
    Recalling~\eqref{definition u}, a quick calculation reveals that
    \[
        \F_{u}(0,0) =
        \begin{pmatrix}
            \rest*{\parn[\Big]{-\partial_\eta + \frac{1}{F^2} }}_{\confS} & 0  \\
            \frac{1}{\pi} \rest[\big]{\partial_\xi^2}_{\zeta = 0}         & -1
        \end{pmatrix}
        \eqc \mc{L} \vcentcolon \X \to \Y
    \]
    at the trivial solution. The upper left entry of this operator matrix is invertible precisely when $F^2 > 1$. This follows, for instance, from the existence of the strict supersolution $\dot w(x,y) = y$; see~\cite[Theorem~6.1]{lopezgomez2003classifying} and~\cite[Corollary~A.11]{wheeler2015pressure}. As $\mc{L}$ is bounded and lower triangular, and the other diagonal entry is also invertible, this immediately implies that $\mc{L}$ is an isomorphism $\X \to \Y$, at which point the existence and local uniqueness of the solution curve $\Curve_\loc$ is an immediate consequence of the analytic implicit function theorem. In particular, uniqueness and the symmetry properties of $\F$ in~\eqref{symmetry F} imply that $w^\beta = w^{-\beta}$ and $\kappa^{-\beta} = -\kappa^\beta$. This proves the statements in~\cref{local uniqueness part,local parameterization part}. Moreover, as a result of the symmetry~\eqref{small amplitude symmetry}, only even powers of $\beta$ will appear in the expansion for $w^\beta$, and only odd powers in the expansion for $\kappa^\beta$.

    Consider next the leading-order form of $u^\beta \ceq (w^\beta,\kappa^\beta)$ asserted in~\cref{leading-order part}. Differentiating the equation $\F(u^\beta,\beta) = 0$ with respect to $\beta$ gives
    \[
        \F_u(u^\beta,\beta)\partial_\beta u^\beta +\F_\beta(u^\beta,\beta) = 0,
    \]
    where we note that $\F_\beta(0,0) = 0$, and therefore $\partial_\beta u^0= 0$. Taking a second derivative then yields
    \[
        \mc{L} \partial_\beta^2 u^0 + \F_{\beta \beta}(0,0) =0
    \]
    after evaluating at $\beta = 0$. A direct calculation using the definition of $a$ in~\eqref{definition a} now shows that
    \[
        \F_{\beta\beta}(0,0) = \parn*{2\pi^2 \sech^2\parn*{\frac{\pi \xi}{2}},0},
    \]
    which gives the leading order term for $w^\beta$, with the claimed characterization of $\ddot{w}$. Similarly, the leading order term in $\kappa^\beta$ is obtained by differentiating yet again.
\end{proof}

While we will not need such representations in the present paper, it is interesting to note that the function $\ddot{w}$ appearing in the leading-order asymptotics \cref{leading-order part} can be expressed as an integral.

\begin{proposition}[Fourier representation of $\ddot{w}$]
    The element $\ddot{w}$ in \cref{leading-order part} can be written as
    \begin{equation}
        \label{ddot w as fourier transform}
        \ddot{w}(\zeta) = 2 \pi \frac{\sin(\eta \pi / 2)}{\cosh(\pi \xi /2) + \cos(\eta \pi / 2)} + 4\pi \ddot{v},
    \end{equation}
    where
    \begin{equation}
        \label{ddot v definition}
        \ddot{v}(\zeta) \ceq \frac{2}{\pi} \int_0^\infty \frac{1}{F^2 t \coth(t) - 1} \frac{\sinh(\eta t)}{\sinh(2t)}\cos(\xi t)\dee t.
    \end{equation}
\end{proposition}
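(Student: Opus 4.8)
The plan is to solve the boundary value problem \eqref{ddot w characterization} characterizing $\ddot w$ and match the outcome with \eqref{ddot w as fourier transform}. Since $\ddot w \in \W$ is odd across the real axis, it vanishes on $\{\eta = 0\}$, so within $\W$ the problem \eqref{ddot w characterization} is equivalent to the well-posed mixed Dirichlet--Robin problem on the half-strip $\confD_+$; uniqueness of its solution in $\W$ is exactly the isomorphism property of $(-\partial_\eta + F^{-2})|_\confS$ for $F^2 > 1$ that was already invoked in the proof of \cref{small-amplitude theorem}. It therefore suffices to exhibit the right-hand side of \eqref{ddot w as fourier transform} as an element of $\W$ solving this problem.

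First I would split off an elementary particular solution. The addition formula $\tanh(a + ib) = (\sinh 2a + i\sin 2b)/(\cosh 2a + \cos 2b)$ identifies the closed-form term as $P_0 \ceq 2\pi\,\im\tanh(\tfrac{\pi}{4}\zeta)$, which is harmonic on $\{\abs\eta < 2\} \supset \confD$, has the correct $\W$-symmetries, decays exponentially as $\abs\xi \to \infty$, and vanishes on the bed. Because $\tanh(\tfrac{\pi}{4}\zeta)$ is holomorphic we have $\partial_\eta\im\tanh(\tfrac{\pi}{4}\zeta) = \tfrac{\pi}{4}\re\sech^2(\tfrac{\pi}{4}\zeta)$, and the identity $\cosh^2(a + i\tfrac{\pi}{4}) = \tfrac12(1 + i\sinh 2a)$ gives $\re\sech^2(\tfrac{\pi}{4}\xi + i\tfrac{\pi}{4}) = 2\sech^2(\tfrac{\pi}{2}\xi)$. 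Hence on $\confS$ one has $P_0 = 2\pi\sech(\tfrac{\pi}{2}\xi)$ and $\partial_\eta P_0 = \pi^2\sech^2(\tfrac{\pi}{2}\xi)$, so the residual $\ddot w - P_0$ must solve the same equation with the harmless right-hand side $F^{-2}P_0|_\confS = 2\pi F^{-2}\sech(\tfrac{\pi}{2}\xi)$.

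This residual is then produced by a Fourier cosine transform in $\xi$: writing it as $\int_0^\infty \widehat R(t)\sinh(\eta t)\cos(\xi t)\,dt$ --- the form forced by harmonicity, evenness in $\xi$, and vanishing at $\eta = 0$ --- the Robin condition reduces to the algebraic relation $\widehat R(t)(t\cosh t - F^{-2}\sinh t) = 4F^{-2}\sech t$, where the right-hand side comes from the transform pair $\int_{\R}\sech(\tfrac{\pi}{2}\xi)\cos(\xi t)\,d\xi = 2\sech t$. Using $t\cosh t - F^{-2}\sinh t = F^{-2}\sinh t\,(F^2 t\coth t - 1)$ and $\sinh t\cosh t = \tfrac12\sinh 2t$ solves this as $\widehat R(t) = 8/[\sinh(2t)(F^2 t\coth t - 1)]$, so that the residual is precisely $4\pi\ddot v$ with $\ddot v$ as in \eqref{ddot v definition}. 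That this integral defines an element of $\W$ uses $F^2 > 1$ in an essential way, since then $F^2 t\coth t - 1 \ge F^2 - 1 > 0$ stays bounded away from zero on $[0,\infty)$ while $\sinh(\eta t)/\sinh(2t) = O(e^{(\abs\eta - 2)t})$ decays for $\abs\eta \le 1$; one may differentiate under the integral (harmonicity, symmetries, Hölder bounds) and integrate by parts in $t$ (decay of all $\zeta$-derivatives as $\abs\xi \to \infty$).

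Equivalently --- and this is the cleanest way to record the proof --- one verifies \eqref{ddot w as fourier transform} directly: differentiating under the integral yields $\partial_\eta\ddot v|_\confS = \tfrac1\pi\int_0^\infty t\cos(\xi t)/[(F^2 t\coth t - 1)\sinh t]\,dt$ and $\ddot v|_\confS = \tfrac1\pi\int_0^\infty \cos(\xi t)/[(F^2 t\coth t - 1)\cosh t]\,dt$, and the algebraic identity $\tfrac{t}{\sinh t} - \tfrac{1}{F^2\cosh t} = \tfrac{F^2 t\coth t - 1}{F^2\cosh t}$ collapses $4\pi(\partial_\eta\ddot v - F^{-2}\ddot v)|_\confS$ to $\tfrac{4}{F^2}\int_0^\infty \sech t\cos(\xi t)\,dt = \tfrac{2\pi}{F^2}\sech(\tfrac{\pi}{2}\xi)$, which cancels the $-F^{-2}P_0|_\confS$ contribution and leaves exactly $\pi^2\sech^2(\tfrac{\pi}{2}\xi)$. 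I expect the only real friction to be this bookkeeping --- fixing the constant in front of $\im\tanh(\tfrac{\pi}{4}\zeta)$ through the $\cosh^2(a + i\tfrac{\pi}{4})$ identity, and spotting the cancellation $\tfrac{t}{\sinh t} - \tfrac{1}{F^2\cosh t} = \tfrac{F^2 t\coth t - 1}{F^2\cosh t}$ that renders the entire $\ddot v$-contribution inert once the Robin operator is applied --- together with the standard but slightly tedious convergence and decay estimates for the Fourier integral.
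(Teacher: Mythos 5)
Your proof is correct, and every computation I checked goes through: the identification of the closed-form term as $2\pi\im\tanh(\tfrac{\pi}{4}\zeta)$, its trace $2\pi\sech(\tfrac{\pi}{2}\xi)$ and normal derivative $\pi^2\sech^2(\tfrac{\pi}{2}\xi)$ on $\confS$ via $\cosh^2(a+i\tfrac\pi4)=\tfrac12(1+i\sinh 2a)$, the inversion $\widehat R(t)=8/[\sinh(2t)(F^2t\coth t-1)]$, and the cancellation $\tfrac{t}{\sinh t}-\tfrac{1}{F^2\cosh t}=\tfrac{F^2t\coth t-1}{F^2\cosh t}$ that makes the Robin operator applied to $4\pi\ddot v$ produce exactly $\tfrac{2\pi}{F^2}\sech(\tfrac{\pi}{2}\xi)$; combined with uniqueness in $\W$ (invertibility of $(-\partial_\eta+F^{-2})|_{\confS}$ for $F^2>1$, already used for $\F_u(0,0)$), this establishes \eqref{ddot w as fourier transform}. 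The route is the same Fourier-analytic computation as the paper's, but organized in the reverse order and made self-contained: the paper first writes the full solution of \eqref{ddot w characterization} as the single cosine-transform integral $8\int_0^\infty \tfrac{F^2t}{F^2t-\tanh t}\tfrac{\sinh(\eta t)}{\sinh 2t}\cos(\xi t)\dee t$, using the tabulated transform of $\sech^2$ from Oberhettinger \textup{[I.7.2]}, and then splits off the elementary term by the table identity \textup{[I.7.20]} for $\int_0^\infty\tfrac{\sinh(\eta t)}{\sinh 2t}\cos(\xi t)\dee t$ (equivalently, the split $\tfrac{F^2t}{F^2t-\tanh t}=1+\tfrac{1}{F^2t\coth t-1}$); you instead peel off the explicit harmonic piece first, computing its Robin data by hand through complex analysis, so that the residual problem has data proportional to $\sech(\tfrac{\pi}{2}\xi)$ and only the elementary transform $\int_0^\infty\sech t\,\cos(\xi t)\dee t=\tfrac\pi2\sech(\tfrac{\pi}{2}\xi)$ is needed. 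What your version buys is independence from the integral tables and an explicit record of the well-posedness/uniqueness step and of the decay estimates justifying differentiation under the integral; what the paper's buys is brevity, since both table entries are quoted rather than derived.
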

\begin{proof}
    Deducing that the solution to~\eqref{ddot w characterization} satisfies
    \[
        \ddot{w}(\zeta) = 8 \int_0^\infty \frac{F^2 t}{F^2 t - \tanh(t)} \frac{\sinh(\eta t)}{\sinh(2t)}\cos(\xi t)\dee t
    \]
    is relatively straightforward, since the right-hand side of the boundary condition has an explicit Fourier transform~\cite[\nopp I.7.2]{oberhettinger1990tables}. From this,~\eqref{ddot w as fourier transform} immediately follows by using~\cite[\nopp I.7.20]{oberhettinger1990tables}.
\end{proof}

\begin{remark}
    The integral in~\eqref{ddot v definition} can further be developed as an infinite series, elucidating its exact asymptotics. Inserting the partial fraction decomposition
    \begin{equation}
        \label{partial fraction decomposition}
        \frac{1}{F^2 t \coth(t) - 1} = \frac{2}{F^2}\sum_{k=0}^\infty \frac{T_k}{t^2 + t_k^2}
    \end{equation}
    into~\eqref{ddot v definition}, and using~\cite[\nopp I.7.46]{oberhettinger1990tables} combined with~\cite[\nopp 1.91]{oberhettinger1973fourier} eventually yields
    \[
        \ddot{v}(\zeta) = \sum_{k=0}^\infty \frac{T_k}{\sin^2(t_k)}e^{-t_k \abs{\xi}} \sin(t_k \eta) - \frac{\sin(\eta \pi /2) \cosh(\xi \pi/2)}{\cosh(\pi \xi) - \cos(\pi \eta)}
    \]
    for all $\zeta \in \confD \setminus \brac{\xi = 0}$. Concretely, $\brac{t_k}$ is the sequence of positive solutions to $F^2 t\cot(t)=1$, with $t_k - k\pi \in (0,\pi/2)$, $t_k \nearrow \pi / 2$, and
    \[
        T_k \ceq \parn*{1 - \frac{1}{t_k^2}\frac{1}{F^2}\parn*{1-\frac{1}{F^2}}}^{-1}.
    \]
\end{remark}
\begin{remark}
    Using~\eqref{partial fraction decomposition} together with~\cite[\nopp 2.4.3.1, 2.4.3.13]{prudnikov1986integrals}, it is also possible to show from~\eqref{ddot v definition} that
    \[
        \partial_\eta^{2m+1} \ddot{v}(0) = \frac{4}{\pi F^2}(-1)^{m-1} \sum_{k=0}^\infty T_k t_k^{2m}\parn[\bigg]{\Psi\parn*{\frac{2 t_k}{\pi}} + \sum_{j=1}^{2m+1} \frac{2^j-1}{j} \parn*{\frac{2 t_k}{\pi}}^{-j} B_j}
    \]
    for all $m \geq 0$, where~\cite[\nopp 2.11.9]{luke1969special}
    \[
        \Psi(x) \ceq \frac{1}{2}\parn*{\psi\parn*{\frac{x}{2}+1}-\psi\parn*{\frac{x}{2}+\frac{1}{2}}} \sim - \sum_{j=1}^\infty \frac{2^j-1}{j} x^{-j} B_j
        \quad \text{ as } x \to \infty.
    \]
    Here $\psi$ is the usual digamma function, while $B_j$ is the $j$th Bernoulli number. These derivatives show up in the asymptotics of \cref{small-amplitude theorem}\ref{leading-order part}.
\end{remark}

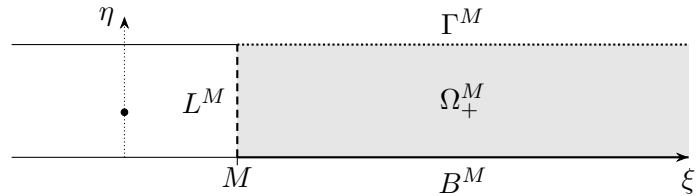
\begin{figure}[htb]
    \centering
    \tikzsetnextfilename{conformal_monotonicity}
    \begin{tikzpicture}[use Hobby shortcut,scale=1.5]
    \pgfmathsetmacro{\vpos}{0.4}
    \pgfmathsetmacro{\xmax}{5}
    \pgfmathsetmacro{\Mval}{1}

    \fill[black!10] ($(\Mval,0)$) rectangle ($(\xmax,1)$) node[midway,black] {$\Omega_+^M$};

    \draw[-Stealth] (-1,0) -- ($(\xmax,0)$) node[below] {$\xi$};

    \node [below] at ($(\Mval,0)$) {$M$};
    \draw ($(\Mval,-2pt)$) --($(\Mval,2pt)$);

    \draw[ densely dotted,-Stealth] (0,0) -- (0, 1.25) node[left] {$\eta$};

    \draw (-1,1) -- ($(\Mval,1)$);

    \draw[thick] ($(\Mval,0)$) -- ($(\xmax,0)$) node[midway,below] {$B^M$};

    \draw[thick, densely dashed] ($(\Mval,0)$) -- ($(\Mval,1)$) node[midway,left] {$L^M$};

    \draw[thick, densely dotted] ($(\Mval,1)$) -- ($(\xmax,1)$) node[midway,above] {$\Gamma^M$};

    \filldraw ($(0,\vpos)$) circle[shift only, radius=1.25pt];

\end{tikzpicture}
    \caption{The half-strip $\confD_+^M$ in~\eqref{definition confDplus} along with its boundary components in~\eqref{definition confDplus boundary}.}
    \label{conformal monotonicity figure}
\end{figure}

\subsection{Monotonicity}
\label{local monotonicity section}

The next two results prove that near the point of bifurcation, $w$ has a certain monotonicity property related to~\eqref{intro monotonicity} in the statement of \cref{intro point vortex theorem}. Before stating this property precisely, we introduce some additional notation, illustrated in \cref{conformal monotonicity figure}. For any $M \ge 0$, we define the half strip
\begin{equation}
    \label{definition confDplus}
    \confD_+^M \ceq (M,\infty) \times (0,1),
\end{equation}
and its boundary components
\begin{equation}
    \label{definition confDplus boundary}
    L^M \ceq \brac{M} \times [0,1], \quad \confS^M \ceq (M,\infty) \times \brac{1}, \quad B^M \ceq (M,\infty) \times \brac{0}.
\end{equation}
These will be useful in the proofs.
\begin{definition}
    \label{definition conformal monotonicity}
    We call $(w,\kappa,\beta) \in \Open$ \emph{strictly conformally monotone} provided $w$ satisfies
    \begin{equation}
        \label{conformal monotonicity inequality}
        w_\xi < 0 \qquad \text{on } \confD_+^0 \cup \confS^0.
    \end{equation}
\end{definition}
Clearly this can be viewed as a condition on the conformal mapping $f$. Among other things, this condition implies that $y$-coordinate along the surface $\fluidS$ is strictly increasing (as a function of arc length, say) for $x \leq 0$ and strictly decreasing for $x \geq 0$. Note that this can happen not only for surfaces that are graphs like the one depicted in \cref{point vortex domains figure}, but also overhanging surfaces such as the one shown in \cref{cyclopean wave figure}.

\begin{lemma}[Asymptotic conformal monotonicity]
    \label{asymptotic conformal monotonicity lemma}
    For any $F^2 > 1$ and $M >0$, there exists $\delta > 0$ such that, if $(u,\beta) \in \Open \setminus \brac{(0,0)}$ satisfies~\eqref{Bernoulli condition} on $\confS^M$, obeys the bound $\norm{ w }_{C^1(\confS^M)} < \delta$, and
    \begin{alignat*}{-1}
        w_\xi & \leq 0 & \qquad & \text{on } L^M,                      \\
        \intertext{then necessarily we have that}
        w_\xi & < 0    &        & \text{on } \confD_+^M \cup \confS^M.
    \end{alignat*}
\end{lemma}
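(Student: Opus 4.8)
The plan is to run a maximum principle for the harmonic function $g \ceq w_\xi$ on the half-strip $\confD_+^M$, with a comparison function chosen so that the supercriticality $F^2 > 1$ comes into play. First I would record the boundary behaviour of $g$: since $w \in \W$ is odd across $\brac{\eta = 0}$ it vanishes on $B^M$, so $g \equiv 0$ there; on $L^M$ the hypothesis gives $g \le 0$; and because $w \in C_0^{k+3+\alpha}(\ol\confD)$, both $g$ and $g_\eta$ tend to $0$ uniformly as $\xi \to \infty$. On $\confS^M$, differentiating the Bernoulli relation \eqref{Bernoulli condition} in $\xi$, writing $w_{\xi\xi} = g_\xi$ and $w_{\xi\eta} = g_\eta$, and using \eqref{Bernoulli condition} a second time to remove the Jacobian $w_\xi^2 + (1+w_\eta)^2 = a^2/(1 - 2w/F^2)$ from the denominator, one finds that $g$ satisfies the oblique boundary condition
\[
 (1 + w_\eta)\, g_\eta + w_\xi\, g_\xi - \frac{a^2}{F^2 (1 - 2 w / F^2)^2}\, g = \frac{a\, a_\xi}{1 - 2 w / F^2} \qquad \text{on } \confS^M .
\]
The crucial sign observation is that, differentiating the explicit formula \eqref{definition a},
\[
 a_\xi = \frac{\pi}{2}\, \gamma(\kappa,\beta)\, \sin(\pi\beta)\, \frac{\sinh(\pi\xi)}{(\cosh(\pi\xi)+\cos(\pi\beta))^2} \le 0 \qquad \text{on } \confS^M,
\]
since $\gamma(\kappa,\beta)\sin(\pi\beta) \le 0$ by \eqref{gamma inequality} and $M > 0$ forces $\sinh(\pi\xi) > 0$; as also $a \ge 1$ by \eqref{a inequality} and $1 - 2w/F^2 > 0$ once $\delta$ is small, the right-hand side of the oblique condition is $\le 0$.

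Next I would introduce the comparison function $\phi(\zeta) \ceq \eta + c_0$, where $c_0$ is any fixed constant with $0 < c_0 < F^2 - 1$; such a $c_0$ exists precisely because $F^2 > 1$, and this is the only point at which supercriticality is used --- it is the classical supersolution ``$y$'', shifted so as to be strictly positive on $\ol{\confD_+^M}$. Writing $g = \phi v$, harmonicity of both $g$ and $\phi$ gives $\Delta v + \frac{2}{\phi} v_\eta = 0$ in $\confD_+^M$, an equation with no zeroth-order term, so by the strong maximum principle $v$ cannot attain a positive maximum in the interior unless it is constant. On $B^M \cup L^M$ one has $v \le 0$, and $v \to 0$ at infinity, so a positive supremum of $v$ could only be attained at some $\zeta_0 \in \confS^M$ with $\xi_0 > M$ (the corner $(M,1)$ is excluded since $v \le 0$ there). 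At such a point $v(\zeta_0) > 0$ and $v_\xi(\zeta_0) = 0$, hence $g_\xi(\zeta_0) = 0$, while the Hopf lemma gives $v_\eta(\zeta_0) > 0$. Substituting $g = \phi v$ into the oblique boundary condition at $\zeta_0$, its left-hand side equals $(1+w_\eta)\phi\, v_\eta(\zeta_0) + \bigl[(1+w_\eta) - a^2\phi/(F^2(1-2w/F^2)^2)\bigr] v(\zeta_0)$, and, after using \eqref{Bernoulli condition} once more, the coefficient of $v(\zeta_0)$ becomes $(1+w_\eta) - (w_\xi^2 + (1+w_\eta)^2)(1+c_0)/(F^2(1-2w/F^2))$, which for $\delta$ sufficiently small (depending only on $F$) stays bounded below by a positive multiple of $1 - (1+c_0)/F^2 > 0$, while $(1+w_\eta)\phi > 0$. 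Hence the left-hand side at $\zeta_0$ is strictly positive, contradicting that the right-hand side is $\le 0$. Therefore $v \le 0$, that is $g = w_\xi \le 0$, on all of $\ol{\confD_+^M}$.

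To promote this to the strict inequality on $\confD_+^M \cup \confS^M$, note that if $v$ vanished at an interior point, or --- by the same Hopf-lemma computation at a boundary zero --- at a point of $\confS^M$, then the strong maximum principle would force $v \equiv 0$; then $w_\xi \equiv 0$, hence by the decay of $w$ also $w \equiv 0$ on $\confD_+^M$, and then the boundary condition forces $a_\xi \equiv 0$, so $\gamma(\kappa,\beta)\sin(\pi\beta) = 0$ and therefore $\beta = 0$ --- the trivial configuration, which does not occur at the nontrivial solutions to which this lemma is applied (where $\gamma \ne 0$). I expect the only delicate point to be the perturbation bookkeeping on $\confS^M$: one must check that, after substituting \eqref{Bernoulli condition}, the coefficients of $v_\eta$ and of $v$ in the transformed boundary condition retain their positive signs uniformly in $\xi > M$ once $\delta$ is chosen small; granting that, the rest is a routine application of the Hopf and strong maximum principles on the unbounded strip.
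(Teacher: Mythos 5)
Your argument is correct and is essentially the paper's own proof: your substitution $w_\xi = (\eta + c_0)v$ with $0 < c_0 < F^2-1$ is exactly the paper's comparison function $v = \tfrac{1+\varepsilon}{\eta+\varepsilon}\,w_\xi$, and the contradiction at a positive maximum on $\confS^M$ — obtained from the differentiated Bernoulli condition, the sign of $a_\xi$ coming from \eqref{gamma inequality} together with \eqref{a inequality}, and the Hopf boundary-point lemma — is the same as in the paper. The only difference is that you additionally spell out the promotion to the strict inequality (and the degenerate $v\equiv 0$ case), a step the paper leaves implicit.
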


\begin{proof}
    By definition of the space $\W$, we have that $w_\xi$ is harmonic and it vanishes identically on $B^M$, as well as in the limit $\xi \to +\infty$. Fix now $ 0 < \varepsilon < F^2 - 1$, and define $v$ through
    \begin{equation}
        \label{v definition}
        v \ceq \frac{1 + \varepsilon}{\eta + \varepsilon} w_\xi,
    \end{equation}
    so that
    \[
        \Delta v + \frac{2}{\eta + \varepsilon} v_\eta = 0 \quad \text{in } \confD_+^M, \quad \text{with} \quad
        \begin{alignedat}{-1}
            v & = 0    &  & \text{on } B^M             \\
            v & \leq 0 &  & \text{on } L^M             \\
            v & \to 0  &  & \text{as } \xi \to \infty,
        \end{alignedat}
    \]
    and for the sake of contradiction, suppose that $v$ attains a positive maximum. By the maximum principle, this will happen at some point $\zeta_* \in \confS^M$, as $v \leq 0$ on $L^M \cup B^M$. There,
    \[
        v_\xi(\zeta_*) = 0 \quad \text{and} \quad v_{\eta}(\zeta_*) > 0,
    \]
    where the strict inequality follows from the Hopf boundary-point lemma~\cref{max principle}\ref{hopf lemma}.

    On the other hand, differentiating the Bernoulli condition~\eqref{Bernoulli condition}, we find that
    \[
        \brak*{\frac{1}{1+\varepsilon} A^2(1+w_\eta)-\frac{1}{F^2}}v + A^2 (1+w_\eta)v_\eta = A a_\xi < 0, \quad \text{where } A \ceq \frac{a}{v^2 + (1+w_\eta)^2},
    \]
    at $\zeta = \zeta^*$. In view of~\eqref{a inequality}, and our choice of $\varepsilon$, we see that the left-hand side is positive when $\delta$ is sufficiently small. This is a contradiction.
\end{proof}

\begin{lemma}[Local conformal monotonicity]
    \label{local conformal monotonicity lemma}
    By possibly shrinking $\beta_0$, we may ensure that all solutions on $(u^\beta,\beta) \in \Curve_\loc \setminus\brac{(0,0)}$ are strictly conformally monotone in the sense of \cref{definition conformal monotonicity}
\end{lemma}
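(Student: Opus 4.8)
The plan is to adapt the maximum-principle argument in the proof of \cref{asymptotic conformal monotonicity lemma}, running it at $M=0$ and using the smallness of $\beta$ in place of the smallness hypothesis there.

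First I would reduce to $\beta \in (0,\beta_0)$ via the symmetry $w^{-\beta}=w^\beta$ coming from~\eqref{small amplitude symmetry}, and note that $\beta \ne 0$ along $\Curve_\loc \setminus \brac{(0,0)}$. Since $\beta \mapsto w^\beta$ is continuous into $\W \hookrightarrow C^1(\ol\confD)$ with $w^0 = 0$, I would then shrink $\beta_0$ so that $\norm{w^\beta}_{C^1(\confS)}$ and $\abs{a(\placeholder;\kappa^\beta,\beta)-1}$ are as small as needed, uniformly for $\abs\beta<\beta_0$.

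For the heart of the proof, fix $\varepsilon \in (0,F^2-1)$ and put $v \ceq \tfrac{1+\varepsilon}{\eta+\varepsilon} w^\beta_\xi$ on $\ol{\confD_+^0}$, which --- exactly as in \cref{asymptotic conformal monotonicity lemma} --- satisfies $\Delta v + \tfrac{2}{\eta+\varepsilon} v_\eta = 0$ in $\confD_+^0$ and $v \to 0$ as $\xi \to +\infty$. Three facts make the choice $M=0$ legitimate on $\Curve_\loc$: (i) since $w^\beta \in \W$ is even in $\xi$, we have $w^\beta_\xi \equiv 0$ on $L^0 = \brac{0} \times [0,1]$, so $v$ vanishes there and the left-edge sign condition holds with equality; (ii) $v$ vanishes on $B^0$ by the oddness of $w^\beta$ in $\eta$; and (iii) because $\beta \ne 0$, the explicit formula~\eqref{definition a} combined with~\eqref{gamma inequality} yields $a_\xi(\xi;\kappa^\beta,\beta) < 0$ for all $\xi > 0$. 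Were $v$ to attain a positive maximum, the strong maximum principle --- together with the decay and the vanishing of $v$ on $L^0 \cup B^0$ --- would locate it at some $\zeta_* = (\xi_*,1) \in \confS^0$ with $\xi_* > 0$, where $v_\xi(\zeta_*) = 0$ and $v_\eta(\zeta_*) > 0$ by \cref{max principle}\ref{hopf lemma}. Differentiating the Bernoulli condition~\eqref{Bernoulli condition} along $\confS$, as in \cref{asymptotic conformal monotonicity lemma}, gives at $\zeta_*$
\[
    \brak*{\tfrac{1}{1+\varepsilon} A^2 (1+w^\beta_\eta) - \tfrac{1}{F^2}} v + A^2(1+w^\beta_\eta)\, v_\eta = A\, a_\xi, \qquad A \ceq \frac{a}{v^2 + (1+w^\beta_\eta)^2};
\]
by~\eqref{a inequality} and the smallness arranged above the bracket is positive, so the left-hand side is strictly positive while the right-hand side is negative (since $\xi_* > 0$) --- a contradiction. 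Hence $v \le 0$ on $\confD_+^0 \cup \confS^0$. To upgrade this to $v < 0$, suppose $v(\zeta_0) = 0$ for some $\zeta_0 \in \confD_+^0 \cup \confS^0$. If $\zeta_0 \in \confD_+^0$, the strong maximum principle forces $v \equiv 0$, hence $w^\beta_\xi \equiv 0$, so $w^\beta$ depends only on $\eta$; being harmonic and decaying as $\xi \to \pm\infty$ it vanishes identically, whereupon~\eqref{Bernoulli condition} reduces to $a(\placeholder;\kappa^\beta,\beta) \equiv 1$ on $\confS$, forcing $\gamma(\kappa^\beta,\beta)\sin(\pi\beta) = 0$ and thus $\beta = 0$ by~\eqref{gamma inequality}, a contradiction. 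If instead $\zeta_0 \in \confS^0$ and $v \not\equiv 0$, then $\zeta_0$ is a boundary maximum, so $v_\eta(\zeta_0) > 0$ by the Hopf lemma, and the displayed identity with $v(\zeta_0) = 0$ gives $0 < A^2(1+w^\beta_\eta)\, v_\eta = A\, a_\xi(\zeta_0) < 0$, again impossible. Therefore $w^\beta_\xi < 0$ on $\confD_+^0 \cup \confS^0$, i.e. $(u^\beta,\beta)$ is strictly conformally monotone in the sense of \cref{definition conformal monotonicity}.

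I expect the only genuinely delicate point to be confirming that the $M=0$ instance of the boundary-point analysis in \cref{asymptotic conformal monotonicity lemma} really goes through unchanged --- in particular, that a positive maximum of $v$ along $\confS^0$ is attained at an interior point $\xi_* > 0$ rather than at the corner $(0,1)$, since it is precisely there that we invoke $a_\xi(\xi_*;\kappa^\beta,\beta) < 0$, a sign that degenerates to $0$ as $\xi \to 0$.
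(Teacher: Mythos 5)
Your argument is correct, but it takes a genuinely different route from the paper. The paper never runs the nonlinear maximum-principle argument at $M=0$: instead it (a) proves strict conformal monotonicity of the leading-order profile $\ddot w$ directly from the linear problem~\eqref{ddot w characterization}, (b) invokes \cref{asymptotic conformal monotonicity lemma} at $M=1$ for the tail region, and (c) uses the expansion $w^\beta = \ddot w\beta^2 + O(\beta^4)$ from~\eqref{small amplitude asymptotics} both to verify the hypothesis $w^\beta_\xi \le 0$ on $L^1$ and to get strict negativity on the bounded piece $(\confD_+^0\cup\confS^0)\setminus(\confD_+^2\cup\confS^2)$, then patches the two regions together. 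You instead run a single global argument on the whole half-strip, exploiting that evenness forces $w^\beta_\xi \equiv 0$ on $L^0$ (so the corner $(0,1)$, which you rightly flagged as the delicate point, is harmless: a positive maximum cannot sit where $v$ vanishes) and that $\gamma(\kappa^\beta,\beta)\sin(\pi\beta)<0$ for $\beta\ne 0$ makes $a_\xi<0$ strictly for $\xi>0$, which you then also use for the strictness upgrade via the strong maximum principle and Hopf lemma. What your route buys is that it bypasses the leading-order asymptotics entirely, needing only $w^\beta\to 0$ in $C^1(\confS)$ from continuity of the local curve, and it yields strictness intrinsically rather than by inheriting it from $\ddot w$; what the paper's route buys is that it reuses \cref{asymptotic conformal monotonicity lemma} verbatim (the same lemma that later drives the open-property argument along the global curve) and keeps the nonlinear boundary analysis confined to the far region where $M>0$. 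One small bookkeeping point: the displayed identity you quote from the proof of \cref{asymptotic conformal monotonicity lemma} omits a term proportional to $v\,v_\xi$ that arises when differentiating~\eqref{Bernoulli condition}, but you only ever evaluate it at points where either $v_\xi=0$ (an interior maximum along $\confS^0$) or $v=0$, so the omission is immaterial; your other uses of symmetry,~\eqref{a inequality},~\eqref{gamma inequality}, and \cref{max principle} check out.
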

\begin{proof}
    First, we claim that the leading-order part of the waves on $\Curve_\loc$ is strictly conformally monotone: Differentiating~\eqref{ddot w characterization} with respect to $\xi$, and defining $\ddot{v}$ and $\varepsilon$ like in~\eqref{v definition}, we find that
    \[
        \Delta \ddot{v} + \frac{2}{\eta + \varepsilon} \ddot{v}_\eta = 0 \quad \text{in } \confD_+^0, \quad \text{with} \quad
        \begin{alignedat}{-1}
            \brak*{\tfrac{1}{1+\varepsilon} - \tfrac{1}{F^2}}\ddot{v} + \ddot{v}_\eta & < 0   &  & \text{on } \confS^0        \\
            \ddot{v}                                                                  & = 0   &  & \text{on } B^0 \cup L^0    \\
            \ddot{v}                                                                  & \to 0 &  & \text{as } \xi \to \infty,
        \end{alignedat}
    \]
    Again seeking a contradiction, we suppose that $v$ attains a positive maximum, which must occur at some point $\zeta_* \in \confS^0$. Since $v_\eta(\zeta_*) > 0$ by the Hopf boundary-point lemma, we obtain our contradiction in the boundary condition on $\confS^0$.

    From~\cref{asymptotic conformal monotonicity lemma}, we know there is a $\delta > 0$ such that if $\norm{w^\beta}_{C^1(\confS^1)} < \delta$ and $w_\xi^\beta \leq 0$ on $L^1$, then $w_\xi^\beta < 0$ on $\confD_+^1 \cup \confS^1$. Possibly shrinking $\beta_0$, we may arrange both that $\norm{w^\beta}_{C^1(\confS^1)} < \delta$ and that
    \[
        w_\xi^\beta < 0 \qquad \text{on } (\confD_+^0 \cup \confS^0) \setminus (\confD_+^2 \cup \confS^2)
    \]
    for all $0 <\abs{\beta} < \beta_0$. The latter due to the asymptotics in~\eqref{small amplitude asymptotics}. Thus we have strict conformal monotonicity for all $0 < \abs{\beta} < \beta_0$.
\end{proof}

\section{Large solitary wave-borne point vortices}
\label{global point vortex section}

In this section, we extend the local solution curve $\Curve_\loc$ from \cref{small-amplitude theorem} to the large-amplitude regime using a \emph{global} implicit function theorem argument; ultimately leading us to~\cref{intro point vortex theorem}. As a first step in this direction, we have the following unrefined continuation result, which is the immediate product of applying the general theory from~\cref{homoclinic global ift}.

\begin{theorem}[Global bifurcation]
    \label{global ift}
    There exist a curve $\Curve \supset \Curve_\loc$ that admits the global $C^0$ parameterization
    \[
        \Curve \ceq \brac*{ (u(s), \beta(s)) : s \in \R } \subset \F^{-1}(0) \cap \Open
    \]
    with $(u(0),\beta(0)) = (0,0)$, and which satisfies the following:
    \begin{enumerate}[label=\textup{(\alph*)}]
        \item \label{symmetry} The curve enjoys the symmetry properties
              \[
                  w(-s) = w(s), \quad \kappa(-s) = -\kappa(s), \quad \text{and} \quad \beta(-s) =-\beta(s)
              \]
              for all $s \in \R$.
        \item \label{well behaved} The linearized operator $\F_u(u, \beta) \colon \X \to \Y$ is Fredholm of index $0$ for all $(u,\beta) \in \Curve$.
        \item \label{alternatives} One of the following limiting alternatives hold.
              \begin{enumerate}[label=\textup{(A\arabic*)}]
                  \item \label{blowup alternative}
                        \textup{(Blowup)} The quantity
                        \begin{equation}
                            \label{blowup}
                            N(s)\ceq \norm{w(s)}_{\W} + \frac{1}{\dist((u(s),\beta(s)), \partial \Open)}
                        \end{equation}
                        is unbounded as $s \to \infty$.
                  \item \label{loss of compactness alternative} \textup{(Loss of compactness)} There exists a sequence $s_n \to \infty$ with $\sup_n{N(s_n)} < \infty$, but for which $\{( u(s_n), \beta(s_n) )\}$ has no convergent subsequence in $\X \times \R$.
                  \item \label{loss of fredholmness alternative} \textup{(Loss of Fredholmness)} There exists a sequence $s_n \to \infty$ with $\sup_{n}{N(s_n)} < \infty$, and for which $\brac{(u(s_n), \beta(s_n))}$ converges to some $(u_*, \beta_*)$ in $\X \times \R$, but the operator $\F_u(u_*, \beta_*)$ is not Fredholm of index $0$.
                  \item \label{loop alternative} \textup{(Closed loop)} There exists $T > 0$ such that $(u(s+T), \beta(s+T)) = (u(s), \beta(s))$ for all $s \in \R$.
              \end{enumerate}
        \item \label{global reparam} Near each point $(u(s_0),\beta(s_0)) \in \Curve$, we can locally reparameterize $\Curve$ so that $s\mapsto (u(s),\beta(s))$ is real analytic.
        \item \label{maximal part} The curve $\Curve$ is maximal, in the sense that if $\Kurve \subset \F^{-1}(0) \cap \Open$ is a locally real-analytic curve containing $(u(0),\beta(0))$, and along which $\F_u$ is Fredholm index $0$, then $\Kurve \subset \Curve$.
    \end{enumerate}
\end{theorem}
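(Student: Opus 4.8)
My plan is to derive \cref{global ift} from the abstract analytic global bifurcation theorem for solitary-wave problems recalled in \cref{homoclinic global ift} (a version of the Dancer--Buffoni--Toland machinery adapted to unbounded domains, from \cite{chen2018existence}). That theorem, applied to a real-analytic operator possessing a local solution curve along which the linearization is invertible and which is Fredholm of index zero on the zero set, outputs precisely a global $C^0$ curve with a local real-analytic reparameterization (\cref{global reparam}), a maximality property (\cref{maximal part}), and the four mutually exclusive limiting alternatives of \cref{alternatives}. So the first step is to verify its hypotheses for $\F$ from \eqref{definition F}. Real-analyticity of $\F$ was established in \cref{point vortex abstract operator section}, and \cref{small-amplitude theorem} supplies the required local curve $\Curve_\loc$ emanating from $(0,0)$, along which $\F_u$ is invertible because $\F_u(0,0) = \mc{L}$ is an isomorphism for $F^2 > 1$. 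What remains is to show that $\F_u(u,\beta)$ is Fredholm of index $0$ at each $(u,\beta) \in \F^{-1}(0) \cap \Open$, which also gives \cref{well behaved}.

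I would prove this Fredholm property by a localization/limiting-operator argument, as in \cref{quoted results appendix}. Since the conformal strip is unbounded, $\F_u(u,\beta)$ is \emph{not} a compact perturbation of $\mc{L}$, and Fredholmness has to be read off from the behaviour of the operator at spatial infinity. Linearizing \eqref{definition F}, one finds that $\F_u(u,\beta)$ acts on $w \in \W$ through a variable-coefficient Dirichlet--Neumann-type operator on $\confS$ together with a pointwise term evaluating derivatives at $i\beta$, with coefficients rational in $w_\xi$, $w_\eta$, and $a$. Because $w \in \W \subset C_{0,\even}^{k+3+\alpha}(\ol{\confD})$ decays with all its derivatives and $a(\placeholder;\kappa,\beta) \to 1$ as $\xi \to \pm\infty$ by \eqref{definition a}, the two limiting operators of $\F_u(u,\beta)$ agree with the constant-coefficient operator $\mc{L}$ --- concretely, with its Dirichlet--Neumann block $-\partial_\eta + F^{-2}$ on $\confS$ --- which is invertible exactly when $F^2 > 1$. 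The criterion then gives that $\F_u(u,\beta)$ is Fredholm of index $0$, the index being zero because the limiting operators are genuinely invertible rather than merely Fredholm. The same reasoning applies at every point of $\Open$, so $\F_u$ is Fredholm of index $0$ throughout.

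Granting all of this, \cref{homoclinic global ift} immediately furnishes $\Curve \supset \Curve_\loc$ with the stated $C^0$ parameterization and normalization $(u(0),\beta(0)) = (0,0)$, together with \cref{global reparam,maximal part,alternatives}. For the symmetry in \cref{symmetry} I would use the equivariance \eqref{symmetry F} of $\F$: the involution $\iota\colon (w,\kappa,\beta)\mapsto(w,-\kappa,-\beta)$ maps $\F^{-1}(0)\cap\Open$ into itself, fixes $(0,0)$, preserves the Fredholm-index-$0$ property of $\F_u$, and --- by \eqref{small amplitude symmetry} --- carries $\Curve_\loc$ onto itself. Hence $\iota(\Curve)$ is again a locally real-analytic solution curve through $(0,0)$ along which $\F_u$ is Fredholm of index $0$, so maximality (\cref{maximal part}) forces $\iota(\Curve)\subseteq\Curve$, and applying $\iota$ once more yields $\iota(\Curve)=\Curve$. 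I would then choose the global parameter $s$ so that $\iota(u(s),\beta(s)) = (u(-s),\beta(-s))$, which unwinds to $w(-s)=w(s)$, $\kappa(-s)=-\kappa(s)$, $\beta(-s)=-\beta(s)$, consistently with \eqref{small amplitude symmetry} near $s=0$; alternatively one could simply run the construction of \cref{homoclinic global ift} equivariantly from the start.

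I expect the Fredholm verification of the second paragraph to be the main obstacle: the unbounded domain rules out the quick compact-perturbation argument, and this is exactly the step where supercriticality $F^2 > 1$ is indispensable. It is also the reason one must appeal to the refined abstract theorem of \cite{chen2018existence}, with its additional ``loss of compactness'' alternative~\ref{loss of compactness alternative}, rather than to the classical global bifurcation theory on compact domains.
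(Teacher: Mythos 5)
Your proposal follows the same route as the paper, which in fact disposes of this theorem in two lines: $\F$ is real analytic, $\F_u(0,0)$ was shown to be an isomorphism in the proof of \cref{small-amplitude theorem}, so \cref{homoclinic global ift} applies directly, and the symmetry in part \ref{symmetry} follows from the equivariance \eqref{symmetry F} (your involution-plus-maximality argument is a perfectly good way to make that last point precise). The one substantive issue is your second paragraph: you have misread the hypotheses of \cref{homoclinic global ift}. That theorem does \emph{not} ask for Fredholmness of index $0$ on the zero set as an input --- it only requires the isomorphism at the base point; Fredholmness along the curve is part of its \emph{conclusion}, and its possible failure at a limit is exactly the alternative \ref{loss of fredholmness alternative}. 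So the step you single out as ``the main obstacle'' is not needed here at all. Moreover, your sketch of it overreaches: the limiting-operator argument (a variant of~\cite[Appendix~A.3]{wheeler2013solitary}, invoked in \cref{global fredholm lemma}) only yields that $\F_{1w}$ is \emph{semi}-Fredholm when the constant-coefficient problem at $\xi\to\pm\infty$ is invertible; the index is not read off from the invertibility of the limits alone, and the paper recovers index $0$ only by continuity of the index along the curve starting from the isomorphism at $(0,0)$, combined with the bordering \cref{bordering lemma} --- and only at limit points of $\Curve$, where it is used later to rule out \ref{loss of fredholmness alternative}. A claim that $\F_u$ is Fredholm of index $0$ at \emph{every} point of $\Open$ is neither established in the paper nor justified by your argument (for instance, obliqueness of the linearized Bernoulli condition is not guaranteed at arbitrary points of $\Open$). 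None of this breaks your proof, since the hypotheses you do verify (analyticity and the isomorphism at the trivial solution) are exactly what \cref{homoclinic global ift} requires, but the Fredholm discussion should either be deleted or deferred to the stage where the alternatives are refined.
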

\begin{proof}
    Since we have already verified that $\F_u(0,0)$ is an isomorphism $\X \to \Y$, this follows directly from the analytic implicit function theorem as stated in~\cref{homoclinic global ift}. The symmetry in \cref{symmetry} is a consequence of~\eqref{symmetry F}.
\end{proof}

The next several subsections address the realizability of each of the scenarios~\ref{blowup alternative}--\ref{loop alternative}, with the eventual outcome being the much improved set of alternatives claimed in~\cref{intro point vortex theorem}.

\subsection{Monotonicity}
\label{global monotonicity section}

In the previous section, we proved that each solution $(u,\beta)$ on the local curve $\Curve_\loc$ is strictly conformally monotone, which we recall means that~\eqref{conformal monotonicity inequality} holds. On the other hand, we wish to ultimately show that the vertical velocity $\mf{v}$ exhibits the sign~\eqref{intro monotonicity}, which also implies a sort of monotonicity. Indeed, if~\eqref{intro monotonicity} holds, then starting at the crest line $\brac{ x = 0 }$ in the $z$-plane and moving along any streamline above the bed, the vertical coordinate will be strictly decreasing. Let us now prove that these two notions of monotonicity are actually equivalent for solutions to~\eqref{abstract operator equation}: the vertical velocity obeys~\eqref{intro monotonicity} precisely when the corresponding $w$ is conformally monotone~\eqref{conformal monotonicity inequality}.

These types of arguments are commonplace in global bifurcation-theoretic studies of water waves. However, there is a novel element here, because the velocity field has a singularity at the point vortex, and thus special attention and some new ideas are needed to understand the behavior there. Rather than work with the vertical velocity $\mf{v}$ in the $z$-plane, it will be more convenient to consider the conformal vertical velocity $V = \mf{v} \circ f$. It can be found from $(u,\beta)$ via
\begin{equation}
    \label{formula V}
    V = -\im{\frac{W_\zeta}{f_\zeta}} = -\im{\frac{W_\zeta}{1+w_\eta+i w_\xi}}
\end{equation}
in $\confD \setminus \brac{\pm i\beta}$, and the sign condition~\eqref{intro monotonicity} is clearly equivalent to
\begin{equation}
    \label{definition streamline monotonicity}
    V < 0 \qquad \text{on } \confD_+^0 \cup \confS^0.
\end{equation}

As a preparatory lemma, let us first establish this sign near the point vortex.

\begin{lemma}[Sign near vortex]
    \label{sign V near vortex lemma}
    Let $(u,\beta) \in \Open \cap \F^{-1}(0)$ be given with $\beta \neq 0$. Then
    \begin{equation}
        \label{sign of V near vortex}
        V < 0 \qquad \text{on } \partial B_r(i\abs{\beta}) \cap \confD_+^0.
    \end{equation}
    for some $r = r(\beta,\kappa) > 0$.
\end{lemma}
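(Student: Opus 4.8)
The plan is to analyze the leading-order behavior of the conformal vertical velocity $V$ on a small circle around the point vortex location $i\abs\beta$, using the explicit structure of the complex potential $W$. Near $\zeta = i\abs\beta$, formula~\eqref{formula V} gives $V = -\im(W_\zeta/f_\zeta)$, and from~\eqref{solitary W_zeta expansion} we know $W_\zeta$ has a simple pole there with residue $\gamma/(2\pi i)$ (up to a sign depending on which of $\pm i\beta$ is the physical vortex; write $\gamma_* = \sgn(\beta)\gamma$ for that effective strength, so that the pole of $W_\zeta$ at $i\abs\beta$ has residue $\gamma_*/(2\pi i)$). Meanwhile $f_\zeta(i\abs\beta) = 1 + w_\eta + i w_\xi$ evaluated at the vortex is a fixed nonzero complex number, call it $c_0 = f_\zeta(i\abs\beta)$. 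Writing $\zeta - i\abs\beta = r e^{i\theta}$, we get
\[
    \frac{W_\zeta}{f_\zeta} = \frac{\gamma_*}{2\pi i\, c_0}\,\frac{1}{r e^{i\theta}} + O(1)
    \qquad\text{as } r \to 0,
\]
so $V = -\im(W_\zeta/f_\zeta) = -\frac{1}{r}\im\!\parn*{\frac{\gamma_*}{2\pi i\, c_0}e^{-i\theta}} + O(1)$, and the sign of $V$ for small $r$ is governed by the sign of $-\im\!\parn*{\frac{\gamma_*}{2\pi i\, c_0}e^{-i\theta}}$ on the relevant arc.

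The key point is to restrict $\theta$ to the arc $\partial B_r(i\abs\beta) \cap \confD_+^0$, which for small $r$ is essentially all of $\theta \in (0,\pi)$ (the part of the circle with $\xi > 0$, intersected with $\eta \in (0,1)$; since $0 < \abs\beta < 1$ the constraint $\eta < 1$ and $\eta > 0$ are not active for $r$ small, and $\xi > 0$ gives the half-circle). On this half-circle I need $-\im\!\parn*{\frac{\gamma_*}{2\pi i\, c_0}e^{-i\theta}} < 0$ uniformly for $\theta \in (0,\pi)$ — but this is false as stated for a generic complex $c_0$, since $-\im(\lambda e^{-i\theta})$ changes sign as $\theta$ runs over $(0,\pi)$ unless $\lambda$ is purely imaginary. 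The resolution must come from the symmetry/monotonicity already available: the velocity field is even across the imaginary axis with $\mf v$ odd, so along the crest line $\brac{\xi = 0}$ one has $w_\xi = 0$, hence $c_0 = f_\zeta(i\abs\beta) = 1 + w_\eta(i\abs\beta)$ is \emph{real and positive} (positivity because $(u,\beta) \in \Open$ forces $w_\xi^2 + (1+w_\eta)^2 > 0$ and, near the trivial solution's connected component, $1 + w_\eta > 0$). Then $\frac{\gamma_*}{2\pi i\, c_0} = \frac{\gamma_*}{2\pi c_0}\cdot\frac{1}{i}$ is purely imaginary, say $= -i\mu$ with $\mu = \gamma_*/(2\pi c_0)$ real, and $-\im(-i\mu e^{-i\theta}) = \mu\re(e^{-i\theta})\cdot(-1)\cdot(-1)$... more carefully, $-\im\!\parn*{\frac{-i\mu}{1}e^{-i\theta}} = -\im(-i\mu(\cos\theta - i\sin\theta)) = -\im(-i\mu\cos\theta - \mu\sin\theta) = \mu\cos\theta$. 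That still changes sign at $\theta = \pi/2$, so the naive leading-order argument at a fixed point $i\abs\beta$ genuinely does not suffice — the $O(1)$ correction terms matter near $\theta = \pi/2$, i.e. near the top and bottom of the small circle.

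So the real argument is more delicate, and here is how I would structure it. Rather than working on a circle centered exactly at $i\abs\beta$, I would exploit that the phantom vortex at $-i\abs\beta$ has the opposite effective strength, so $W_\zeta$ also has a pole there; combined with the known expansion~\eqref{solitary W_zeta expansion} and the value of $\cot(\pi\beta)$, one can get a more precise two-term description of $W_\zeta/f_\zeta$ near the vortex, and in particular identify the $O(1)$ term. The cleanest path is probably: (i) use the symmetry $V|_{\xi=0} $: on the segment $\brac{\xi = 0} \cap \confD_+$ near $i\abs\beta$, $V = -\im(W_\zeta/f_\zeta)$ with both $W_\zeta$ and $f_\zeta$ having definite real/imaginary parity, and show directly that $V$ has the correct sign just above and just below $i\abs\beta$ on the imaginary axis using the next-order term in~\eqref{solitary W_zeta expansion} (the coefficient $1 + \frac{\gamma}{4}\cot(\pi\beta)$ combined with the vortex-advection relation~\eqref{intro f vortex advection equation}); (ii) for the rest of the small circle (away from the imaginary axis, i.e. $\theta$ bounded away from $\pi/2$ and, by the $\xi>0$ restriction, away from $0$ and $\pi$... wait, $\xi>0$ keeps $\theta \in (0,\pi)$, and the problematic point is $\theta = \pi/2$ which is $\xi = 0$ — but that's excluded from $\confD_+^0$!).

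Rethinking: $\confD_+^0 = (0,\infty)\times(0,1)$, so on $\partial B_r(i\abs\beta)\cap\confD_+^0$ we have $\xi > 0$ strictly, meaning $\theta \in (-\pi/2, \pi/2)$ in my parameterization $\zeta - i\abs\beta = re^{i\theta}$ if I measure $\theta$ from the positive-$\xi$ direction — i.e. $\theta$ is bounded away from nothing and $\re(e^{-i\theta}) = \cos\theta$ can still be small near $\theta = \pm\pi/2$. The genuinely hard part — and the main obstacle — is controlling $V$ near the top and bottom of the circle (where the streamline through the vortex meets $\brac{\xi=0}$), because there the leading $1/r$ term vanishes and the sign is decided by the $O(1)$ term, which depends on $f_{\zeta\zeta}(i\abs\beta)$, $\kappa$, and the Bernoulli constraint. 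My plan is to handle those regions separately: parameterize by $\theta \in (-\pi/2,\pi/2)$, split into $\abs\theta \le \pi/2 - \epsilon$ (where the $1/r$ term dominates for $r$ small and has the right sign $\propto \mu\cos\theta$ with $\mu = \gamma_*/(2\pi c_0) < 0$, using $\gamma\sin(\pi\beta) \le 0$ from~\eqref{gamma inequality} so $\gamma_* = \sgn(\beta)\gamma$ has sign $\sgn(\gamma\,\mathrm{sgn}\,\beta)$ — need $\gamma_* < 0$, i.e. check the sign carefully) and $\abs\theta \in (\pi/2-\epsilon, \pi/2)$, where I instead use continuity from the imaginary-axis values established in step (i) together with a barrier/comparison argument inside the thin lens-shaped region between the circle and the segment $\brac{\xi = 0}$. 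Since this is a plan and not the full proof, I will assume the routine expansions of $W_\zeta/f_\zeta$ to two orders and the evenness/oddness bookkeeping go through, and flag that the crux is the sign bookkeeping in~\eqref{gamma inequality} plus the boundary behavior near $\theta = \pm\pi/2$.
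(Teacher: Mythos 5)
You have correctly identified the crux --- on the half-circle $\partial B_r(i\abs\beta)\cap\confD_+^0$ the pole term contributes $\sim \gamma\cos\theta/(2\pi f_\zeta(i\beta) r)$, which degenerates as $\theta\to\pm\pi/2$, so a naive ``leading term dominates $O(1)$'' argument fails near the imaginary axis --- but your proposed resolution does not work and misses the key idea. Step (i) of your plan asks to show $V$ has a strict sign on $\brac{\xi=0}$ just above and below $i\abs\beta$; this is impossible, since by symmetry ($W_\zeta$ and $f_\zeta$ are both real on the imaginary axis) $V$ vanishes identically on $\brac{\xi=0}\setminus\brac{\pm i\beta}$. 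Consequently there is no strict sign to propagate by ``continuity from the imaginary-axis values,'' and the barrier argument in the thin lens is left unspecified and would at best give a non-strict inequality on the problematic part of the arc. The missing observation, which is exactly how the paper closes this gap, is that the remainder
\[
    h(\zeta)\ceq \frac{W_\zeta}{f_\zeta}-\frac{\gamma}{2\pi i f_\zeta(i\beta)}\,\frac{1}{\zeta-i\beta}
\]
is holomorphic at $i\beta$ (cf.~\eqref{conformal helmholtz kirchhoff}) and is itself \emph{real on the imaginary axis}, so $\im h$ is a harmonic function vanishing on $\brac{\xi=0}$ near $i\beta$ and hence is $O(\xi)$, not merely $O(1)$. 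One can then factor $\xi$ out of the entire expression,
\[
    V=-\im\frac{W_\zeta}{f_\zeta}=\xi\parn*{\frac{\gamma}{2\pi f_\zeta(i\beta)}\,\frac{1}{\abs{\zeta-i\beta}^2}+O(1)},
\]
and for $r$ small the bracket is uniformly negative on the whole circle (using $\gamma<0$ from~\eqref{gamma inequality} after the harmless normalization $\beta>0$, and $f_\zeta(i\beta)>0$), so $V<0$ wherever $\xi>0$ --- with no degeneracy near $\theta=\pm\pi/2$ because the correction vanishes there at the same linear rate in $\xi$ as the main term. A secondary point: your justification of $f_\zeta(i\abs\beta)>0$ (``near the trivial solution's connected component'') is not valid for an arbitrary solution in $\Open\cap\F^{-1}(0)$; the correct argument is that $f_\zeta$ is real on both axes, non-vanishing by membership in $\Open$, and tends to $1$ at infinity, so by continuity it is positive on the axes.
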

\begin{proof}
    We assume without loss of generality that $\beta > 0$. As $(u,\beta) \in \Open$, the function $f_\zeta = 1+w_\eta+i w_\xi$ is real on both axes, converges to $1$ at infinity, and is non-vanishing. In fact, a continuity argument shows that $f_\zeta$ must be positive on both axes, and therefore $f_\zeta(i\beta) > 0$. Next we observe that
    \[
        \frac{W_\zeta}{f_\zeta} - \frac\gamma{2\pi i f_\zeta(i\beta)} \frac 1{\zeta - i\beta}
    \]
    is holomorphic at $i\beta$ due to Helmholtz--Kirchhoff~\eqref{conformal helmholtz kirchhoff}, and necessarily also real on imaginary. Thus taking imaginary parts yields
    \[
        V = -\im\frac{W_\zeta}{f_\zeta}
        =\xi\parn*{\frac{\gamma}{2\pi f_\zeta(i\beta)} \frac{1}{\abs{\zeta - i\beta}^2}
            + O(1)}
    \]
    as $\zeta \to i\beta$. From this,~\eqref{sign of V near vortex} follows since $\gamma < 0$ by~\eqref{gamma inequality}.
\end{proof}

\begin{lemma}[Monotonicity equivalence]
    \label{monotonicity equivalent lemma}
    A solution $(u,\beta) \in \Open \cap \F^{-1}(0)$ to the wave-borne point vortex problem is strictly conformally monotone in the sense of \Cref{definition conformal monotonicity} if and only if the corresponding conformal vertical velocity $V$ satisfies~\eqref{definition streamline monotonicity}
\end{lemma}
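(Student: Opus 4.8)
The plan is to turn the claimed equivalence into two maximum–principle arguments on the half-strip $\confD_+^0$, linked by an algebraic identity on $\confS^0$. First, the case $\beta=0$ is immediate: then $\gamma=0$ by~\eqref{gamma inequality}, so $W_\zeta\equiv 1$ and~\eqref{formula V} reduces to $V=w_\xi/\bigl(w_\xi^2+(1+w_\eta)^2\bigr)$ throughout $\confD$, whence $V$ and $w_\xi$ have the same strict sign at every point. So assume $\beta\neq 0$, and exactly as in the proof of~\cref{sign V near vortex lemma} take $\beta>0$ without loss of generality, so that the vortex sits at $i\beta\in L^0$.

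The key input is the behaviour on the free surface. Since $\im W$ is constant along $\confS$, there $W_\zeta$ equals the real quantity $a$ from~\eqref{definition a}, and substituting $f_\zeta=1+w_\eta+iw_\xi$ into~\eqref{formula V} gives
\[
    V=\frac{a\,w_\xi}{w_\xi^2+(1+w_\eta)^2}\qquad\text{on }\confS .
\]
Because $a\ge 1$ by~\eqref{a inequality} and the denominator is positive on $\Open$, $V$ and $w_\xi$ have the same strict sign at each point of $\confS^0$. I would then record the remaining boundary data: $w\in\W$ is odd in $\eta$ and even in $\xi$, so $w_\xi=0$ on $B^0\cup L^0$ and $w_\xi\to 0$ as $\xi\to+\infty$; and the symmetries of $W$ force $W_\zeta$ to be real on both coordinate axes, so~\eqref{formula V} also yields $V=0$ on $B^0\cup(L^0\setminus\{i\beta\})$ and $V\to 0$ as $\xi\to+\infty$.

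The converse implication is then short. If $V<0$ on $\confD_+^0\cup\confS^0$, then $w_\xi<0$ on $\confS^0$ by the surface identity, while $w_\xi$ is harmonic on all of $\confD_+^0$ --- the singularity lives in $W$, not in $w$ --- vanishes on $B^0\cup L^0$ and at infinity, and is $\le 0$ on the entire boundary; the maximum principle (using the decay at $\xi\to+\infty$ to prevent escape of the supremum) gives $w_\xi\le 0$ in $\confD_+^0$, and the strong maximum principle upgrades this to $w_\xi<0$ in the interior since $w_\xi\not\equiv 0$. For the forward implication, assume $w_\xi<0$ on $\confD_+^0\cup\confS^0$; then $V<0$ on $\confS^0$ by the identity, $V=0$ on $B^0\cup(L^0\setminus\{i\beta\})$, and $V\to 0$ at infinity, while~\cref{sign V near vortex lemma} provides $r>0$ with $V<0$ on $\partial B_r(i\beta)\cap\confD_+^0$, and (shrinking $r$) also $V<0$ on the half-disc $B_r(i\beta)\cap\confD_+^0$ by the leading-order expansion of $V$ near the vortex. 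Applying the maximum principle and then the strong maximum principle to $V$, which is harmonic with boundary values $\le 0$ on the domain $\confD_+^0\setminus\overline{B_r(i\beta)}$, yields $V<0$ there; together with the half-disc, the arc, and $\confS^0$ this gives $V<0$ on $\confD_+^0\cup\confS^0$.

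The one delicate point is that the vortex $i\beta$ lies on the boundary component $L^0$ of the half-strip, so the velocity field degenerates exactly on $\partial\confD_+^0$; this is precisely why~\cref{sign V near vortex lemma} is proved beforehand and why the forward argument excises a half-disc rather than working on $\confD_+^0$ directly. Everything else is a routine application of the (strong) maximum principle on an unbounded planar domain, using that $V$ and $w_\xi$ vanish at infinity.
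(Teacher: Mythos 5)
Your proposal is correct and follows essentially the same route as the paper: the surface identity $V = a\,w_\xi/(w_\xi^2+(1+w_\eta)^2)$ together with $a\ge 1$, the vanishing of $V$ and $w_\xi$ on the axes and at infinity, \cref{sign V near vortex lemma} to control the sign near the excised vortex, and the (strong) maximum principle on the half-strip in both directions. The only differences are cosmetic — you treat $\beta=0$ as a separate trivial case and cover the small half-disc near $i\beta$ directly from the expansion, where the paper instead lets the excision radius $r\to 0$ — and both devices are sound.
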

\begin{proof}
    From~\eqref{definition complex potential}, we can see that $W_\zeta$ is real on the axes, and thus by~\eqref{formula V} we have that $V$ vanishes identically there, except for at $\zeta = \pm i\beta$. Likewise, we have via an elementary computation that
    \begin{equation}
        \label{V on Gamma}
        V = \frac{ aw_\xi}{w_\xi^2 + (1+w_\eta)^2}
    \end{equation}
    on $\confS$. Thus by~\eqref{a inequality}, the signs of $V$ and $w_\xi$ coincide on $\confS$.

    Suppose now that $(u,\beta)$ is strictly conformally monotone. By the above paragraph and \cref{sign V near vortex lemma}, we know that for all $0 < r \ll 1$, $V$ is harmonic on $S_r \ceq \confD_+^0 \setminus B_r(i\abs{\beta})$, vanishes at infinity, and that $V \leq 0$ on $\partial S_r$. As $V < 0$ on $\confS^0$, the strong maximum principle therefore implies that $V$ satisfies~\eqref{definition streamline monotonicity}. Conversely, if $V$ satisfies~\eqref{definition streamline monotonicity}, then $w_\xi \leq 0$ on $\partial \confD_+^0$, vanishes at infinity, and is harmonic on $\confD_+^0$. Since $w_\xi < 0$ on $\confS^0$, the strong maximum principle shows that the solution is strictly conformally monotone.
\end{proof}

Since these various notions of monotonicity are all equivalent for solutions to~\eqref{abstract operator equation}, we may simply refer to solutions as strictly monotone without any ambiguity. Our next objective is to prove that strict monotonicity persists globally along $\Curve$, by showing that it is both an open and closed property in $\Open \cap \F^{-1}(0)$.

\begin{lemma}[Open property]
    \label{open property lemma}
    Let $(u_*,\beta_*) \in \Open \cap \F^{-1}(0)$ be given and suppose that it is strictly monotone. Then there exists $\varepsilon > 0$ such that if $(u,\beta) \in \Open \cap \F^{-1}(0)$ and
    \[
        \norm{ w - w_* }_{C^2(\confD_+^0)} + \abs{\kappa - \kappa_*} + \abs{\beta - \beta_*} < \varepsilon,
    \]
    then $(u,\beta)$ is also strictly monotone.
\end{lemma}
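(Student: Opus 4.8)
The plan is to show that the open condition $w_\xi < 0$ on $\confD_+^0 \cup \confS^0$ is preserved under $C^2$-small perturbations. First observe that, by the definition of $\W$ and the symmetries it encodes, for \emph{every} $(u,\beta) \in \Open$ the function $w_\xi$ is harmonic in $\confD_+^0$, vanishes identically on $L^0 \cup B^0$, and tends to $0$ as $\xi \to +\infty$. Hence it suffices to prove that $w_\xi < 0$ on $\confS^0$: combined with $w_\xi \le 0$ on the rest of $\partial\confD_+^0$ and the decay at infinity, the maximum principle gives $w_\xi \le 0$ in $\confD_+^0$, and since $w \not\equiv 0$ (being $C^2$-close to the nontrivial $w_*$), the strong maximum principle together with unique continuation upgrades this to $w_\xi < 0$ throughout $\confD_+^0$, i.e.\ strict conformal monotonicity. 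So the whole problem reduces to the strict sign on $\confS^0$.

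To obtain $w_\xi < 0$ on $\confS^0$, I would split this half-line into three pieces. For large $\xi$: since $w_* \to 0$ at infinity, choose $M$ so large that $\norm{w_*}_{C^1(\confS^M)}$ lies below the constant $\delta$ of \cref{asymptotic conformal monotonicity lemma} (which, on inspecting its proof, may be taken independently of $M$), so that $\norm{w}_{C^1(\confS^M)} < \delta$ once $\varepsilon$ is small. One also checks that $w_\xi \le 0$ on $L^M$: on the compact set $\brac{M} \times [\eta_0,1]$ the quantity $(w_*)_\xi$ is bounded above by a negative constant, so this is immediate from closeness, while near the bed one writes $w_\xi(M,\eta) = \eta\, w_{\xi\eta}(M,\theta\eta)$ and notes that $w_{\xi\eta}$ is $C^0$-close to $(w_*)_{\xi\eta}$, the latter being negative near $(M,0)$ by Hopf's lemma applied to the harmonic nonnegative function $-(w_*)_\xi$ on the flat boundary piece $B^0$. \Cref{asymptotic conformal monotonicity lemma} then delivers $w_\xi < 0$ on $\confD_+^M \cup \confS^M$. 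On the compact middle piece $[\xi_1, M] \times \brac{1}$, strict monotonicity of $w_*$ gives $(w_*)_\xi \le -c < 0$, so $w_\xi < 0$ there by $C^0$-closeness.

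The remaining piece is a neighborhood of the crest, $(0,\xi_1) \times \brac{1}$. Because the crest $(0,1)$ lies on $L^0$, we have $w_\xi(0,1) = 0$, so it is enough to know that $w_{\xi\xi} < 0$ on $[0,\xi_1] \times \brac{1}$; by the $C^2$-closeness hypothesis and continuity, this in turn follows once $(w_*)_{\xi\xi}(0,1) < 0$ --- that is, once one knows the monotone reference profile does not degenerate at its crest. I expect this last inequality to be the main obstacle. Near $(0,1)$ the function $(w_*)_\xi$ is harmonic in $\confD_+$, odd in $\xi$ (hence vanishing along the nodal line $\brac{\xi=0}$), strictly negative for $\xi > 0$, and on $\confS$ it satisfies the oblique relation obtained by differentiating the Bernoulli condition~\eqref{Bernoulli condition}; one shows it vanishes to exactly first order in $\xi$ at the corner $(0,1)$ by a Serrin-type edge-point argument --- or, more explicitly, by Taylor-expanding at $(0,1)$ and playing harmonicity and the differentiated Bernoulli condition against the strict sign of $(w_*)_\xi$. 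Assembling the three pieces gives $w_\xi < 0$ on all of $\confS^0$, and the reduction in the first paragraph then completes the proof.
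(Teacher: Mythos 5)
Your proposal is correct and is essentially the paper's own proof, which is disposed of in one line as continuity plus \cref{asymptotic conformal monotonicity lemma} plus ``a standard argument involving the Serrin corner-point lemma'' (citing the analogous stratified solitary-wave lemma): your far-field/compact/near-crest splitting of $\confS^0$, the reduction of the interior of $\confD_+^0$ to the maximum principle, and the Hopf argument giving $w_\xi\le 0$ on $L^M$ are exactly that standard argument, and the $C^3$ regularity it consumes is what the paper's subsequent remark alludes to. The one step you flag as the main obstacle, $(w_*)_{\xi\xi}(0,1)<0$, is precisely the Serrin edge-point step the paper invokes, and it goes through as you sketch: applying the corner lemma to $(w_*)_\xi$ at the right-angle corner (where $(w_*)_{\xi\eta}$, $(w_*)_{\xi\xi\xi}$ and hence, by harmonicity, $(w_*)_{\xi\eta\eta}$ vanish by parity) yields either $(w_*)_{\xi\xi}(0,1)<0$ or $(w_*)_{\xi\xi\eta}(0,1)>0$, while differentiating \eqref{Bernoulli condition} twice in $\xi$ at the corner (using $w_\xi=w_{\xi\eta}=a_\xi=0$ there, $1+w_{*\eta}(0,1)>0$, and $a_{\xi\xi}(0)\le 0$ from \eqref{gamma inequality}) shows that $(w_*)_{\xi\xi}(0,1)=0$ would force $(w_*)_{\xi\xi\eta}(0,1)\le 0$, ruling out the second branch.
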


\begin{proof}
    This follows from continuity and the asymptotic conformal monotonicity established in~\cref{asymptotic conformal monotonicity lemma}, by a standard argument involving the Serrin corner-point lemma. See, for example, the proof of~\cite[Lemma 4.21]{chen2018existence}.
\end{proof}

We remark that the proof of \cref{open property lemma} is in fact the only place where having $C^3$ regularity for the conformal mapping is actually required.

\begin{lemma}[Closed property]
    \label{closed property lemma}
    Let $\brac{ (u_n, \beta_n) } \subset \Open \cap \F^{-1}(0)$ be given and suppose that $(u_n, \beta_n) \to (u,\beta) \in \Open \cap \F^{-1}(0)$ in $\X \times \R$. If $F^2 > 1$ and each $(u_n,\beta_n)$ is strictly monotone, then the limit $(u,\beta)$ is either strictly monotone or the trivial solution $(0,0)$.
\end{lemma}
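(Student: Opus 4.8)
The plan is to rephrase everything in terms of the conformal vertical velocity $V = \mf{v}\circ f$ from~\eqref{formula V} and to show that, along the limit, $V$ obeys the strict inequality~\eqref{definition streamline monotonicity} unless the limit is trivial; by the monotonicity equivalence of \cref{monotonicity equivalent lemma} this is exactly the claimed dichotomy. Since $(u_n,\beta_n)\to(u,\beta)$ in $\X\times\R$, we have $w_n\to w$ in $C^{k+3+\alpha}(\ol{\confD})$, $\kappa_n\to\kappa$, $\beta_n\to\beta$, and hence $\gamma_n\ceq\gamma(\kappa_n,\beta_n)\to\gamma(\kappa,\beta)$. Because $\abs{\beta}<1$, the vortex point lies on the segment $L^0$ and stays a fixed positive distance from $\confD_+^0\cup\confS^0$; thus $V_n\to V$ pointwise there, and passing to the limit in the inequality $V_n<0$ — which holds for every $n$ since each $(u_n,\beta_n)$ is strictly monotone — yields $V\le 0$ on $\confD_+^0\cup\confS^0$.

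Next I would run a maximum-principle argument in the interior. As recorded in the proof of \cref{monotonicity equivalent lemma}, $V$ is harmonic on $\confD_+^0$ — its only singularities, at $\pm i\beta$, lie on $L^0$ — vanishes on $B^0$ and on $L^0\setminus\{\pm i\beta\}$, and tends to $0$ as $\xi\to\infty$. Running the strong maximum principle on $S_r\ceq\confD_+^0\setminus\ol{B_r(i\abs{\beta})}$, using \cref{sign V near vortex lemma} to sign $V$ on the arc $\partial B_r(i\abs{\beta})\cap\confD_+^0$ when $\beta\neq0$, and letting $r\searrow 0$, we are left with either $V<0$ throughout $\confD_+^0$ or $V\equiv 0$ there. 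The second alternative can occur only if $\beta=0$, since for $\beta\neq0$ the circulation $\gamma$ is nonzero and $V$ genuinely blows up at the vortex; and in that case~\eqref{V on Gamma} gives $w_\xi\equiv 0$ on $\confS^0$, whence — $w_\xi$ being harmonic and vanishing on all of $\partial\confD_+^0$ and at infinity — $w_\xi\equiv 0$ on $\confD_+^0$, so $w\equiv 0$ and $\kappa=0$; substituting into $\F_1(0,0,\beta)=0$ and applying~\eqref{gamma inequality} forces $\beta=0$, that is, $(u,\beta)=(0,0)$.

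It remains to promote $V<0$ from the open half-strip up to the top boundary $\confS^0$. Suppose instead that $V(\zeta_*)=0$ for some $\zeta_*\in\confS^0$. On one hand, $V$ is harmonic up to $\confS^0$ near $\zeta_*$ and strictly negative just below it, so the Hopf boundary-point lemma~\cref{max principle}\ref{hopf lemma} gives $\partial_\eta V(\zeta_*)>0$. On the other hand, $V(\zeta_*)=0$ forces $w_\xi(\zeta_*)=0$ via~\eqref{V on Gamma}, while the surface identity $U^2+V^2=a^2/(w_\xi^2+(1+w_\eta)^2)$ on $\confS$ — where $U\ceq\mf{u}\circ f$ and the right-hand side is the kinetic energy term of~\eqref{Bernoulli condition} — gives $U(\zeta_*)^2=a(\zeta_*)^2/(1+w_\eta(\zeta_*))^2>0$, since $a\ge 1$ and $\abs{f_\zeta}>0$; in particular $U(\zeta_*)\neq 0$. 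Differentiating the Bernoulli condition $\tfrac12(U^2+V^2)+\tfrac1{F^2}(1+w)=\tfrac12+\tfrac1{F^2}$ tangentially along $\confS$ and using the Cauchy--Riemann relation $\partial_\xi U=-\partial_\eta V$ for the holomorphic function $U-iV=W_\zeta/f_\zeta$, one obtains $-U\,\partial_\eta V+V\,\partial_\eta U+\tfrac1{F^2}w_\xi=0$ on $\confS^0$; evaluating at $\zeta_*$, where $V$ and $w_\xi$ both vanish, forces $\partial_\eta V(\zeta_*)=0$, contradicting Hopf. Hence $V<0$ on $\confD_+^0\cup\confS^0$, which is~\eqref{definition streamline monotonicity}, so $(u,\beta)$ is strictly monotone.

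The main obstacle, as foreshadowed in the discussion preceding \cref{sign V near vortex lemma}, is the singular behavior of the velocity at the point vortex: $V$ is unbounded near $i\beta$, so the interior maximum-principle step cannot be applied on $\confD_+^0$ naively, and one must excise a neighborhood of the vortex and feed in the one-sided sign information from \cref{sign V near vortex lemma}. The boundary step conceals a second, subtler point: the naive route through $w_\xi$ and the differentiated Bernoulli relation produces the factor $1+w_\eta=\re f_\zeta$, whose sign is not controlled once overhanging profiles are admitted; working with $V$ instead, and exploiting $U(\zeta_*)\neq 0$, sidesteps this difficulty entirely.
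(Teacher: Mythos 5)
Your argument is correct, and for the main case $\beta \neq 0$ it is essentially the paper's proof: pass to the limit to get $V \le 0$, use \cref{sign V near vortex lemma} plus the strong maximum principle on $\confD_+^0$ with the vortex excised, and then rule out a zero of $V$ on $\confS^0$ by differentiating the Bernoulli condition, invoking the Hopf lemma, and observing that a surface stagnation point is impossible since $a \ge 1$ and $f_\zeta \neq 0$ (your $U(\zeta_*)^2 = a^2/(1+w_\eta)^2 > 0$ is the same point as the paper's ``contradicts membership in $\Open$''). The genuine difference is the irrotational case $\beta = 0$: the paper disposes of it in one line by citing the classical result of Craig--Sternberg that nontrivial supercritical irrotational solitary waves are strictly monotone, whereas you keep it inside the maximum-principle framework, splitting into $V \equiv 0$ (which you correctly show forces $w \equiv 0$, $\kappa = 0$, hence the trivial solution) and $V < 0$ in the interior, in which case your surface argument — which nowhere uses $\beta \neq 0$ — upgrades this to strict monotonicity. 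This buys a self-contained proof that does not lean on the irrotational literature (and, incidentally, does not use $F^2>1$ beyond the structure already encoded in $\Open$), at the cost of a slightly longer case analysis; the paper's route is shorter but external. Two cosmetic points: the vortex $i\abs{\beta}$ lies \emph{on} $\partial\confD_+^0$, so it is not ``a fixed positive distance from $\confD_+^0 \cup \confS^0$'' — what you actually need, and what is true, is that each fixed $\zeta$ with $\xi>0$ stays a positive distance from $\pm i\beta_n$, which suffices for the pointwise passage to the limit; and your closing step deducing $\beta=0$ from $\F_1(0,0,\beta)=0$ and~\eqref{gamma inequality} is valid but redundant, since in that branch you have already argued $\beta=0$ via \cref{sign V near vortex lemma}.
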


\begin{proof}
    Let the sequence $\brac{(u_n,\beta_n)}$ and its limit $(u,\beta)$ be as in the hypothesis. If $\beta = 0$, then the limiting wave is irrotational by~\eqref{definition gamma}. For supercritical Froude numbers, all nontrivial irrotational solitary waves are strictly monotone~\cite{craig1988symmetry}, so the result holds. Suppose therefore that $\beta \neq 0$, and denote the corresponding limiting conformal vertical velocity by $V$. Continuity and \cref{monotonicity equivalent lemma} ensure that
    \[
        V \leq 0 \qquad \textrm{on } \ol{\confD_+^0} \setminus \brac{i\abs{\beta}},
    \]
    and that $V$ vanishes at infinity. By \cref{sign V near vortex lemma}, we have $V < 0$ on $\partial B_r(i\abs{\beta}) \cap \confD_+^0$ for all $0 < r \ll 1$. The strong maximum principle shows that $V < 0$ on $\confD_+^0 \setminus B_r(i\abs{\beta})$ for all $0 < r \ll 1$, and therefore on $\confD_+^0$. It remains only to prove that $V < 0$ also on $\confS^0$.

    Differentiating the dynamic boundary condition~\eqref{Bernoulli condition} with respect to $\xi$ gives
    \begin{equation}
        \label{differentiated Bernoulli condition}
        V V_\xi-U V_\eta + \frac{1}{F^2} w_\xi = 0 \qquad \text{on } \confS^0,
    \end{equation}
    where $U = \re{(W_\zeta/f_\zeta)}$ is the conformal horizontal velocity, and we have used the Cauchy--Riemann equations. Seeking a contradiction, suppose that $V = 0$ at some $\zeta^* \in \confS^0$. Recalling~\eqref{V on Gamma}, we find that~\eqref{differentiated Bernoulli condition} reduces to
    \[
        U(\zeta^*)V_\eta(\zeta^*) = 0
    \]
    at this point, where $V_\eta(\zeta^*) > 0$ by the Hopf boundary-point lemma~\cref{max principle}~\ref{hopf lemma}. Therefore $\zeta^*$ is a stagnation point, where $U(\zeta^*) = V(\zeta^*) = 0$, which contradicts membership in $\Open$.
\end{proof}

\begin{corollary}[Monotonicity]
    \label{nodal property corollary}
    If $\mc{K}$ is any connected subset of $\Open \cap \F^{-1}(0)$ that contains $\Curve_\loc$, then every $(u,\beta) \in \mc{K} \setminus \brac{(0,0)}$ is strictly monotone.
\end{corollary}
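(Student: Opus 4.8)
The plan is a clopen (connectedness) argument. Set
\[
    \mc{S} \ceq \brac*{(0,0)} \cup \brac*{ (u,\beta) \in \Open \cap \F^{-1}(0) : (u,\beta) \text{ is strictly monotone} },
\]
and the goal is to show that $\mc{S}$ is both relatively open and relatively closed in $\Open \cap \F^{-1}(0)$. Granting this, the corollary follows at once: since $(0,0) \in \Curve_\loc \subset \mc{K}$, the intersection $\mc{K} \cap \mc{S}$ is a nonempty, relatively clopen subset of the connected set $\mc{K}$, hence equals $\mc{K}$; and because the trivial solution has $w \equiv 0$ (so $w_\xi \equiv 0$ and it is manifestly not strictly monotone in the sense of \cref{definition conformal monotonicity}), every $(u,\beta) \in \mc{K} \setminus \brac*{(0,0)}$ must lie in the second set defining $\mc{S}$, i.e.\ must be strictly monotone. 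Here I am also using \cref{monotonicity equivalent lemma} to speak of ``strictly monotone'' unambiguously.

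For openness of $\mc{S}$ away from the trivial solution I would simply quote \cref{open property lemma}: convergence in $\X \times \R$ controls the $C^2(\confD_+^0)$-norm of $w$ together with $\kappa$ and $\beta$ (as $k \ge 0$, so $\W \subset C^{k+3+\alpha}(\ol{\confD})$ embeds into $C^2$), and therefore around each strictly monotone $(u_*,\beta_*)$ there is a neighborhood in $\X \times \R$ whose intersection with $\Open \cap \F^{-1}(0)$ consists of strictly monotone solutions, hence lies in $\mc{S}$. Openness of $\mc{S}$ at the trivial solution is the one place that needs extra care, precisely because $(0,0)$ itself is not strictly monotone. There I would invoke the local uniqueness in \cref{small-amplitude theorem}\ref{local uniqueness part} to obtain a neighborhood $U$ of $(0,0)$ in $\X \times \R$ with $\F^{-1}(0) \cap U = \Curve_\loc \cap U$, and then apply \cref{local conformal monotonicity lemma}, which says that every point of $\Curve_\loc \setminus \brac*{(0,0)}$ is strictly monotone. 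Together these give $(\Open \cap \F^{-1}(0)) \cap U \subset \Curve_\loc \cap U \subset \mc{S}$, so $\mc{S}$ is open.

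For closedness of $\mc{S}$ I would take a sequence $\brac*{(u_n,\beta_n)} \subset \mc{S}$ converging in $\Open \cap \F^{-1}(0)$ to some $(u,\beta)$. If infinitely many $(u_n,\beta_n)$ equal $(0,0)$, then $(u,\beta) = (0,0) \in \mc{S}$. Otherwise, passing to a subsequence along which every $(u_n,\beta_n)$ is strictly monotone, \cref{closed property lemma} --- and this is where the hypothesis $F^2 > 1$ enters, via the known monotonicity of irrotational solitary waves used in its proof --- guarantees that $(u,\beta)$ is either strictly monotone or equal to $(0,0)$; in both cases $(u,\beta) \in \mc{S}$. Hence $\mc{S}$ is closed, completing the argument. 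I expect the only genuinely delicate points to be the two already flagged: ensuring openness of $\mc{S}$ across the trivial solution, which rests on local uniqueness of the zero set together with \cref{local conformal monotonicity lemma}; and ensuring in the closedness step that a limit of strictly monotone solutions cannot degenerate to a nontrivial non-monotone wave, which is exactly the content of \cref{closed property lemma}. Everything else is routine point-set topology.
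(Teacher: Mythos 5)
Your proposal is correct and follows essentially the same route as the paper: the paper also shows that the set of solutions in $\mc{K}$ that are either trivial or strictly monotone is open and closed, using \cref{local conformal monotonicity lemma,monotonicity equivalent lemma} together with the local uniqueness of \cref{small-amplitude theorem}\ref{local uniqueness part} to handle a neighborhood of $(0,0)$, and \cref{open property lemma,closed property lemma} for the clopen argument. Your write-up just makes the openness-at-the-trivial-solution and subsequence details more explicit than the paper does.
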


\begin{proof}
    We know that the solutions on $\Curve_\loc \setminus \brac{(0,0)}$ are strictly monotone by \cref{local conformal monotonicity lemma,monotonicity equivalent lemma}, and \cref{small-amplitude theorem}\ref{local uniqueness part} tells us that these are the only solutions near $(0,0)$. Combined with \cref{open property lemma,closed property lemma}, we conclude that the set of $(u,\beta) \in \mc{K}$ that are either strictly monotone or the trivial solution $(0,0)$ is both open and closed in $\mc{K}$. Since $\mc{K}$ is connected, the result follows.
\end{proof}

\subsection{Precompactness and Fredholmness}

The next lemma adapts the ideas in~\cite[Lemma~6.3]{chen2018existence} to the present setting. In particular, it will allow us to rule out the loss of compactness alternative~\ref{loss of compactness alternative} in~\cref{global ift}.

\begin{lemma}[Compactness dichotomy]
    \label{compactness or front lemma}
    Suppose that $\brac{ (u_n,\beta_n) } \subset \F^{-1}(0) \cap \Open$ satisfies
    \[
        \sup_{n\geq 1}\parn*{ \norm{w_n}_{\W} + \frac{1}{\dist((u_n,\beta_n), \partial \Open)} } < \infty,
    \]
    and that each $(u_n, \beta_n)$ is strictly monotone. Then, either
    \begin{enumerate}[label=\textup{(\roman*)}]
        \item \label{compactness alternative} \textup{(Compactness)} $\brac{ (u_n,\beta_n)}$ has a convergent subsequence in $\X \times \R$; or
        \item \label{front alternative} \textup{(Irrotational front)} there exists a sequence of translations $\brac{\xi_n} \subset \R$, with $\xi_n \to \infty$, such that
              \[
                  w_n(\placeholder+\xi_n) \rightarrow \tilde w \in C^{k+3+\alpha}(\ol{\confD}) \qquad \text{in } C_\loc^{k+3}(\confD)
              \]
              after possibly moving to a subsequence. The limit is a nontrivial \emph{monotone irrotational front}, in that $\tilde w_\xi \leq 0$ in $\confD_+ \cup \confS$, $\tilde w$ is odd in $\eta$, and solves
              \begin{equation}
                  \label{irrotational problem}
                  \lbrac*{
                      \begin{aligned}
                          \Delta \tilde w                                                                     & = 0           & \qquad & \text{in } \confD  \\
                          \frac{1}{2} \frac{1}{\tilde w_\xi^2 + (1+\tilde w_\eta)^2} + \frac{1}{F^2} \tilde w & = \frac{1}{2} &        & \text{on } \confS.
                      \end{aligned}
                  }
              \end{equation}
    \end{enumerate}
\end{lemma}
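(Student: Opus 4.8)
\emph{Proof sketch.} The plan is to pass to a subsequence converging locally in $\confD$, and then dichotomize according to whether this convergence upgrades to the strong topology of $\X \times \R$ or whether some of the ``mass'' of $w_n$ escapes to $\xi = +\infty$; in the latter case the escaping profile will be the asserted front. To begin, the uniform bound on $\norm{w_n}_{\W}$ and the compact embedding $C^{k+3+\alpha}(\ol{\confD}) \hookrightarrow C^{k+3}_\loc(\ol{\confD})$ furnish, after passing to a subsequence, a limit $w$ with $w_n \to w$ in $C^{k+3}_\loc(\ol{\confD})$; standard interior and boundary Schauder estimates for the harmonic functions $w_n$ (using the oblique boundary condition obtained by differentiating \eqref{Bernoulli condition}) upgrade this to $C^{k+3+\alpha}$ convergence on compact subsets of $\ol{\confD}$. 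Since $(u_n,\beta_n)$ stays a fixed distance from $\partial\Open$, there is $\delta > 0$ with $\abs{\beta_n} \le 1 - \delta$ and $w_{n,\xi}^2 + (1+w_{n,\eta})^2 \ge c(\delta) > 0$ on $\ol{\confD}$. Evaluating $\F_2(u_n,\beta_n) = 0$ at $\zeta = i\beta_n$, where $w_{n,\xi}$ vanishes by evenness, expresses $\pi\kappa_n = w_{n,\xi\xi}(i\beta_n)/(1 + w_{n,\eta}(i\beta_n))$ in terms of quantities bounded by $\norm{w_n}_\W$ and $\sqrt{c(\delta)}$, so $\kappa_n$ is bounded; passing to a further subsequence, $\beta_n \to \beta \in (-1,1)$, $\kappa_n \to \kappa$, and hence $\gamma_n \ceq \gamma(\kappa_n,\beta_n) \to \gamma(\kappa,\beta)$.

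Now suppose alternative~\ref{compactness alternative} fails, so that no subsequence of $\brac{w_n}$ converges in $\W$. Given the uniform $\W$-bound and the local convergence just established, the only possible obstruction is that the tails of $w_n$ do not decay uniformly in $n$. A routine covering and pigeonhole argument then produces $\varepsilon > 0$ and, along a subsequence, points $\xi_n$ with $\abs{\xi_n} \to \infty$ and $\norm{w_n}_{C^{k+3+\alpha}(\confD \cap \brac{\abs{\xi - \xi_n} < 1})} \ge \varepsilon$: indeed, if the local $C^{k+3+\alpha}$ norm of $w_n$ near every point far from the origin were uniformly small, then $\norm{w_n}_{C^{k+3+\alpha}(\confD \cap \brac{\abs{\xi} > R})}$ would be small uniformly in $n$, and combined with the local convergence this would force $w_n \to w$ in $\W$. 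By the evenness of $w_n$ in $\xi$ we may assume $\xi_n \to +\infty$.

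Finally, set $\tilde w_n \ceq w_n(\placeholder + \xi_n)$. These functions are harmonic, odd in $\eta$, and uniformly bounded in $C^{k+3+\alpha}(\ol{\confD})$, so after a further subsequence $\tilde w_n \to \tilde w$ in $C^{k+3}_\loc(\confD)$, in fact in $C^{k+3+\alpha}$ on compacts of $\ol{\confD}$ by Schauder; the limit is odd in $\eta$, satisfies $\tilde w \in C^{k+3+\alpha}(\ol{\confD})$ by lower semicontinuity of the norm, and is nontrivial since the lower bound of the previous step survives the limit. The ellipticity bound $w_{n,\xi}^2 + (1+w_{n,\eta})^2 \ge c(\delta)$ also passes to the limit, so the translated Bernoulli condition on $\confS$ remains nondegenerate; since the coefficient $a$ from \eqref{definition a} satisfies $a(\xi + \xi_n;\kappa_n,\beta_n) = 1 - \tfrac{\gamma_n}{2}\sin(\pi\beta_n)/(\cosh(\pi(\xi+\xi_n)) + \cos(\pi\beta_n)) \to 1$ locally uniformly as $\xi_n \to \infty$, passing to the limit in \eqref{Bernoulli condition} shows that $\tilde w$ solves the irrotational problem~\eqref{irrotational problem}. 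No analogue of the vortex equation~\eqref{vortex equation} survives, because the point vortex at $i\beta_n$ is translated to $-\xi_n + i\beta_n$, which leaves every fixed region of $\confD$. Lastly, strict monotonicity of $w_n$ gives $\tilde w_{n,\xi}(\zeta) = w_{n,\xi}(\zeta + \xi_n) < 0$ whenever $\xi > -\xi_n$, and since $\xi_n \to \infty$ this yields $\tilde w_\xi \le 0$ throughout $\confD_+ \cup \confS$ in the limit. This is exactly alternative~\ref{front alternative}.

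I expect the main obstacle to be the bookkeeping in the second step: converting the failure of strong convergence into a translation sequence $\xi_n \to +\infty$ along which $w_n$ stays quantitatively nontrivial, and, relatedly, ensuring that the Arzel\`a--Ascoli limit really is upgraded from $C^{k+3}_\loc$ to $C^{k+3+\alpha}$ on compacts (so that $C^{k+3+\alpha}$-norm lower bounds pass to the limit) and that $\kappa_n$ is a priori bounded via $\F_2 = 0$ together with the uniform ellipticity furnished by the distance-to-$\partial\Open$ hypothesis. Everything else is a routine passage to the limit in the equations and boundary conditions.
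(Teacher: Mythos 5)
Your proposal is correct and takes essentially the same route as the paper: a dichotomy between uniform decay of the tails (which yields compactness in $\W$) and its failure, followed by translation, extraction of a limit, and passage to the limit in the Bernoulli condition to obtain a nontrivial monotone irrotational front, with the vortex term disappearing because $a(\cdot+\xi_n)\to 1$. The only substantive difference is bookkeeping: the paper measures (non)equidecay in the weaker $C^2$ norm, so nontriviality of the translated limit follows from $C^{k+3}_\loc$ convergence alone, whereas your $C^{k+3+\alpha}$ bookkeeping forces the Schauder-type upgrade of the local convergence that you flag yourself --- a fixable but avoidable complication (your filling in of the boundedness of $\kappa_n$ via $\F_2=0$ and the distance-to-$\partial\Open$ bound is a detail the paper glosses over, and is done correctly).
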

\begin{proof}
    Let $\brac{(u_n,\beta_n)}$ be as in the hypothesis of the lemma. Possibly passing to a subsequence, we may arrange that $(\kappa_n, \beta_n)$ converges to some $(\kappa,\beta)$ with $\cos(\pi\beta) > \kappa \sin(\pi\beta)$ and $\abs{\beta} < 1$. If we first suppose that $\brac{w_n}$ is equidecaying, in the sense that
    \[
        \adjustlimits\lim_{M \to \infty}\sup_{n \geq 1}{\norm{ w_n }_{C^2(\confD_+^M)}} =0,
    \]
    then a standard argument confirms that $\brac{w_n}$ has a convergent subsequence in $\W$. Thus, in this case, the compactness alternative holds.

    Assume instead then that $\brac{w_n}$ is \emph{not} equidecaying, which we will demonstrate leads to the front alternative. Possibly moving to a subsequence again, there must then exist $\varepsilon > 0$ and a sequence $\brac{\xi_n}$ with $\xi_n \to \infty$ such that
    \begin{equation}
        \label{tilde w lower bound}
        \norm{\tilde{w}_n}_{C^2(\confD_+^0)} > \varepsilon \qquad \text{for all } n \geq 1,
    \end{equation}
    for the translates $\tilde{w}_n \ceq w_n(\placeholder + \xi_n)$. Physically, these correspond to waves with vortices at $-\xi_n \pm i\beta_n$ in the conformal domain. As $\brac{\tilde w_n}$ is bounded in $C^{k+3+\alpha}(\ol{\confD})$, we can extract a convergent subsequence $\tilde w_n \to \tilde w \in C^{k+3+\alpha}(\ol{\confD})$ in $C^{k+3}_\loc(\confD)$, and this limit is nontrivial due to~\eqref{tilde w lower bound}. Note that because each $\tilde w_n$ is odd in $\eta$, so too is $\tilde w$. Local convergence is enough to guarantee both that $\tilde w$ solves the irrotational problem~\eqref{irrotational problem} and that $\tilde w_\xi \leq 0$ on $\confD_+ \cup \confS$ due to the assumed monotonicity.
\end{proof}

As nontrivial irrotational fronts do not exist for gravity waves beneath air~\cite{rayleigh1914theory,lamb1993hydrodynamics}, we therefore have the following immediate corollary.

\begin{corollary}[Precompactness]
    \label{precompactness corollary}
    The loss of compactness alternative~\ref{loss of compactness alternative} cannot occur.
\end{corollary}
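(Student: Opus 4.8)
\emph{Proof strategy.} This is an immediate consequence of \cref{compactness or front lemma}, so the plan is simply a contradiction argument connecting that dichotomy to the classical nonexistence of gravity-wave fronts. Suppose alternative~\ref{loss of compactness alternative} holds, so that there is a sequence $s_n \to \infty$ with $\sup_n N(s_n) < \infty$ --- equivalently, by~\eqref{blowup},
\[
    \sup_{n \geq 1}\parn*{ \norm{w(s_n)}_{\W} + \frac{1}{\dist((u(s_n),\beta(s_n)), \partial \Open)} } < \infty
\]
--- while $\brac{(u(s_n),\beta(s_n))}$ admits no convergent subsequence in $\X \times \R$. Writing $(u_n,\beta_n) \ceq (u(s_n),\beta(s_n)) \in \F^{-1}(0) \cap \Open$, I would first observe that a constant subsequence would converge, so at most finitely many of the $(u_n,\beta_n)$ can equal the trivial solution $(0,0)$; after discarding those, we may assume $(u_n,\beta_n) \in \Curve \setminus \brac{(0,0)}$ for every $n$. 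Since $\Curve$ is connected --- being the continuous image of $\R$ under the parameterization of \cref{global ift} --- and contains $\Curve_\loc$, \cref{nodal property corollary} guarantees that each $(u_n,\beta_n)$ is strictly monotone. This places us exactly in the hypotheses of \cref{compactness or front lemma}.

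Applying that lemma leaves two possibilities. Alternative~\ref{compactness alternative} supplies a convergent subsequence of $\brac{(u_n,\beta_n)}$, contradicting our assumption outright. Hence alternative~\ref{front alternative} must occur: there exist translations $\xi_n \to \infty$ such that the translates $w_n(\placeholder + \xi_n)$ converge in $C^{k+3}_\loc(\confD)$ to a \emph{nontrivial} $\tilde w \in C^{k+3+\alpha}(\ol{\confD})$ that is odd in $\eta$, satisfies $\tilde w_\xi \leq 0$ on $\confD_+ \cup \confS$, and solves the irrotational gravity water wave problem~\eqref{irrotational problem}. But such a monotone, nontrivial front --- a smooth steady bore for pure-gravity water waves beneath air --- is classically known not to exist~\cite{rayleigh1914theory,lamb1993hydrodynamics}. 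This contradicts the nontriviality of $\tilde w$, so alternative~\ref{loss of compactness alternative} cannot occur.

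\emph{Expected difficulty.} There is essentially no obstacle: the substantive work has been done in \cref{compactness or front lemma}, and the nonexistence of gravity-wave bores is a quoted classical fact. The only points needing a little care are the bookkeeping that matches the quantitative hypothesis of alternative~\ref{loss of compactness alternative} to that of \cref{compactness or front lemma}, the verification that strict monotonicity is available along $\Curve$ (furnished by \cref{nodal property corollary}), and the observation that the limiting object produced in alternative~\ref{front alternative} is genuinely of the type excluded by the cited references rather than, say, a recentered solitary wave --- which is precisely why \cref{compactness or front lemma} was phrased to deliver a \emph{front} rather than merely a bounded nontrivial solution.
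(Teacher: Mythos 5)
Your proposal is correct and follows exactly the paper's own argument: invoke \cref{nodal property corollary} for strict monotonicity along $\Curve$, apply the dichotomy of \cref{compactness or front lemma}, and rule out the front alternative using the classical nonexistence of irrotational gravity-wave fronts~\cite{rayleigh1914theory,lamb1993hydrodynamics}. The extra bookkeeping you include (discarding finitely many trivial solutions) is harmless but not needed beyond what the paper records.
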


\begin{proof}
    Since $\Curve_\loc \subset \Curve$,~\cref{nodal property corollary} ensures that all the waves on $\Curve \setminus \brac{(0,0)}$ are strictly monotone. If the loss of compactness alternative~\ref{loss of compactness alternative} did occur, it would give rise to a nontrivial irrotational front by~\cref{compactness or front lemma}\ref{front alternative}.
\end{proof}

We next show that also loss of Fredholmness can be eliminated from \cref{global ift}.

\begin{lemma}[Global Fredholmness]
    \label{global fredholm lemma}
    The loss of Fredholmness alternative~\ref{loss of fredholmness alternative} cannot occur.
\end{lemma}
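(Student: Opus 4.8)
The plan is to show that for any $(u_*,\beta_*)$ arising as a limit along $\Curve$ of solutions with bounded $N$, the linearized operator $\F_u(u_*,\beta_*)$ is automatically Fredholm of index $0$, so that alternative \ref{loss of fredholmness alternative} is vacuous. The strategy has two parts: first identify the precise structure of $\F_u(u,\beta)$ at an arbitrary solution, and then argue that Fredholmness at index $0$ holds whenever $(u,\beta) \in \Open$ — i.e. whenever the formulation is nondegenerate in the sense encoded by $\Open$. Since by \cref{precompactness corollary} the loss-of-compactness alternative is already excluded, any sequence as in \ref{loss of fredholmness alternative} has $(u(s_n),\beta(s_n)) \to (u_*,\beta_*)$ with $(u_*,\beta_*) \in \Open \cap \F^{-1}(0)$; so it suffices to prove that $\F_u$ is Fredholm index $0$ everywhere on $\Open \cap \F^{-1}(0)$, which contradicts the very conclusion of \ref{loss of fredholmness alternative}.

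First I would compute $\F_u(u,\beta) = (D_w\F_1, D_\kappa\F_1; D_w\F_2, D_\kappa\F_2)$ explicitly. The second component $\F_2$ is a finite-rank perturbation: $D_\kappa \F_2 = -1$ and $D_w\F_2$ is a bounded functional on $\W$ (evaluation of a ratio of derivatives of $w$ at the interior point $i\beta$), so modulo finite rank the operator is block lower-triangular with the scalar $-1$ in the corner and $D_w\F_1(\placeholder,\beta)\colon \W \to \Y_1$ in the other diagonal slot. Hence $\F_u(u,\beta)$ is Fredholm of index $0$ on $\X \to \Y$ if and only if $D_w\F_1(w,\kappa,\beta)\colon \W \to C^{k+2+\alpha}_{0,\even}(\confS)$ is. Linearizing the Bernoulli condition \eqref{Bernoulli condition} in $w$ gives an operator of the form $\dot w \mapsto \rest{(-b_1 \partial_\eta \dot w - b_2 \partial_\xi \dot w + b_0 \dot w)}_\confS$ for coefficients $b_j \in C^{k+2+\alpha}(\confS)$ built from $w_\xi, w_\eta, a$, with $b_1 = a^2(1+w_\eta)/(w_\xi^2+(1+w_\eta)^2)^2 > 0$ strictly positive and bounded away from $0$ precisely because $(u,\beta) \in \Open$ and \eqref{a inequality} holds, while $b_0, b_1, b_2 \to$ the constant-coefficient limit $(\tfrac{1}{F^2}, 1, 0)$ as $\abs\xi \to \infty$ by the solitary-wave decay of $w$ and the decay of $a$ from \eqref{definition a}. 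Composing with the Dirichlet-to-Neumann map for the strip (which sends a boundary trace of a harmonic, decaying, $\eta$-odd function to its normal derivative), the linearized Bernoulli operator becomes $\dot w \mapsto -b_1 \mc{N}\dot w - b_2 \partial_\xi \dot w + b_0 \dot w$ acting on traces, a first-order pseudodifferential operator on $\confS \cong \R$ whose symbol is $b_1(\xi)\abs\xi\coth\abs\xi + ib_2(\xi)\xi + b_0(\xi)$ (I would take care with the exact form of $\mc{N}$ on the strip of half-width $1$, but the essential point is ellipticity: the leading symbol $b_1(\xi)|k|$ is real, nonvanishing for $k \neq 0$).

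The decisive step is then a standard but slightly delicate argument: an elliptic pseudodifferential operator on $\R$ with coefficients converging at $\pm\infty$ to a constant-coefficient operator is Fredholm between the appropriate (here, little-Hölder / $C_0$) spaces exactly when the limiting constant-coefficient operator is invertible — its symbol $\tfrac{1}{F^2} - b_\infty(k)$ must not vanish, where $b_\infty(k) = |k|\coth|k|$. Since $|k|\coth|k| \geq 1 > \tfrac{1}{F^2}$ for $F^2 > 1$, with strict inequality for all real $k$ (including $k = 0$), the limiting operator is invertible; hence $D_w\F_1$ is Fredholm, and its index is $0$ because one can deform $(b_0,b_1,b_2)$ through the class of admissible coefficients (staying elliptic with invertible limits — possible because $b_1 > 0$ throughout) to the constant-coefficient isomorphism $\mc{L}$'s first block from \cref{small-amplitude theorem}, and Fredholm index is invariant along such homotopies. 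The technical heart — and the step I expect to be most fiddly — is justifying the Fredholm criterion on the unbounded domain in the Hölder-space (rather than $L^2$-Sobolev) setting with the correct limiting-symbol condition; I would either invoke a general theorem on asymptotically constant-coefficient elliptic operators on cylinders (e.g. of the type used in \cite{chen2018existence,varholm2016solitary}), or build the parametrix by hand by freezing coefficients and patching. The rest — the reduction from $\F_u$ to $D_w\F_1$, the positivity $b_1 \geq c > 0$ from membership in $\Open$, and the symbol computation — is routine. Once $\F_u(u_*,\beta_*)$ is known to be Fredholm index $0$, alternative \ref{loss of fredholmness alternative}, which requires it to fail that property, cannot occur.
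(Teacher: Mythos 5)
You follow the same skeleton as the paper's proof: reduce to the block $\F_{1w}$ by a finite-rank argument (the paper's \cref{bordering lemma}), deduce a Fredholm-type property from invertibility of the limiting constant-coefficient problem as $\xi\to\pm\infty$ (which, since $w\in\W$ decays, coincides with the trivial-solution linearization and is invertible for $F^2>1$), and then pin down the index. The paper does the first two steps by citing a variant of~\cite[Appendix A.3]{wheeler2013solitary} for semi-Fredholmness, and gets index $0$ by continuity of the index along the connected set $\Curve\cup\brac{(u_*,\beta_*)}$, anchored at the isomorphism $\F_u(0,0)$; your finite-rank reduction and limiting-symbol criterion are in the same spirit.

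The genuine gap is in your index argument. You assert that $b_1 = a^2(1+w_\eta)/\abs{f_\zeta}^4$ is positive and bounded away from zero ``precisely because $(u,\beta)\in\Open$'', but \eqref{definition of O nbhd} only gives $w_\xi^2+(1+w_\eta)^2>0$, i.e.\ $f_\zeta\neq 0$; it says nothing about the sign of $1+w_\eta=\re f_\zeta$ on $\confS$. Along the surface $\re f_\zeta$ is $x_\xi$, which vanishes and changes sign exactly when the wave overturns --- the case this formulation is expressly designed to allow and which the limit $(u_*,\beta_*)$ may realize. Two consequences. First, your principal symbol is not quite right: both the Dirichlet--Neumann term and $b_2\partial_\xi$ are first order, so the symbol is $b_1\abs{k}+ib_2 k$, nonvanishing for $k\neq0$ if and only if $(b_1,b_2)\neq(0,0)$, i.e.\ if and only if $f_\zeta\neq0$ (note $b_1+ib_2=a^2 f_\zeta/\abs{f_\zeta}^4$); ellipticity survives, but under a weaker hypothesis than the one you invoke. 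Second, and decisively, your homotopy of $(b_0,b_1,b_2)$ to the flat coefficients can leave the elliptic class: at a point of $\confS$ where $w_\xi=0$ and $1+w_\eta<0$ (a horizontal tangent on the overturned part of the profile, e.g.\ near the neck of a strongly overhanging wave), the segment joining $(b_1,b_2)$ to $(1,0)$ passes through the origin, the boundary symbol degenerates, and homotopy invariance of the index cannot be applied. This is not a technicality: for two-dimensional oblique/Riemann--Hilbert-type boundary problems the index is governed by the winding of the coefficient field, here of $f_\zeta$ along $\confS$, so ``Fredholm of index $0$ at every point of $\Open\cap\F^{-1}(0)$'' --- the statement you reduce to --- is exactly what requires proof. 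To repair your route you would either need to show the winding of $f_\zeta$ along $\confS$ vanishes (plausible, using injectivity of $f$ on $\ol{\confD}$ and $f_\zeta\to1$, but an additional argument), or do what the paper does: propagate the index from the trivial solution along $\Curve$ itself using local constancy of the (semi-)Fredholm index, which needs no pointwise sign information on the coefficients. A minor remark: the convergence of $(u(s_n),\beta(s_n))$ is already part of the hypothesis of~\ref{loss of fredholmness alternative}, so \cref{precompactness corollary} is not needed for it; what the bound on $N(s_n)$ buys is that the limit remains in $\Open$.
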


\begin{proof}
    Suppose that we have a sequence $\brac{s_n}$ such that $s_n \to \infty$, while the quantity~\eqref{blowup} satisfies $\sup_n N(s_n) < \infty$, and $(u(s_n), \beta(s_n)) \to (u_*,\beta_*)$ in $\X \times \R$. We claim that $\F_u(u_*,\beta_*)$ is necessarily Fredholm of index $0$. A trivial variant of the argument in~\cite[Appendix A.3]{wheeler2013solitary} shows that $\F_{1w}(u_*,\beta_*)$ is semi-Fredholm provided that the kernel of the associated constant-coefficient elliptic boundary-value problem in the limit $\xi \to \infty$ is trivial. Because $w$ vanishes as $\xi\to \infty$, this problem is in fact the same as for the operator $\F_{1w}(0,0) \colon \W \to \Y_1$, which we recall from the proof of \cref{small-amplitude theorem} is invertible. Thus $\F_{1w}(u,\beta)$ is semi-Fredholm along $\Curve$, and at $(u_*,\beta_*)$. By the Fredholm bordering result \cref{bordering lemma} and the continuity of the index, we conclude that the full operator $\F_u(u_*,\beta_*)$ is Fredholm of index $0$ as a map $\X \to \Y$.
\end{proof}

\subsection{On the closed loop alternative}

Lastly, we consider the possibility that the global curve $\Curve$ is a closed loop. Since $s \beta(s) > 0$ for $0 < s\ll 1$ by \cref{small-amplitude theorem} and \cref{global ift}\ref{symmetry}, this can only occur if $\beta$ changes sign. Hence there must be a nontrivial wave $(w_0,\kappa_0,0) \in \Curve$, for which the symmetry properties of $w$ imply that~\eqref{vortex equation} forces $\kappa_0 = 0$ as well.

To emphasize that a closed loop is in principle a very real possibility, we show that a point vortex can be injected into any non-degenerate irrotational solitary wave. By non-degenerate, we here mean the essentially generic condition that $\F_{1w}$ is invertible at the solitary wave. This holds for example along any distinguished arc of a global bifurcation curve of irrotational solitary waves.

\begin{theorem}[Vortex injection]
    \label{vortex injection theorem}
    Fix a supercritical Froude number $F^2 > 1$ and suppose that
    \[
        (u_0,0) = (w_0,0,0) \in \F^{-1}(0) \cap \Open
    \]
    is an irrotational solitary wave at which $\F_{1w}(u_0,0)$ is an isomorphism $\W \to \Y_1$. Then there exists a global $C^0$ curve
    \[
        \Curve_{w_0} \ceq \brac*{ (u(s), \beta(s)) : s \in \R } \subset \F^{-1}(0) \cap \Open
    \]
    with $(u(0),\beta(0)) = (u_0,0)$, satisfying the following:
    \begin{enumerate}[label=\textup{(\alph*)}]
        \item \textup{(Symmetry)} The curve enjoys the symmetries of~\cref{global ift}\ref{symmetry}
              \label{vi symmetry part}
        \item \textup{(Monotonicity)}
              Each wave on $\Curve_{w_0} \setminus \{(0,0)\}$ is strictly monotone.
              \label{vi monotonicity part}
        \item \textup{(Loop or blowup)} Either the blowup~\ref{blowup alternative} or closed loop alternative~\ref{loop alternative} must occur.
              \label{vi alternatives part}
        \item \textup{(Loop criterion)}
              If $\Curve_{w_0}$ is a closed loop, then it must contain an irrotational solitary wave distinct from $(u_0,0)$. The converse is true if this wave is non-degenerate.
              \label{vi closed loop part}
        \item \textup{(Analyticity and maximality)}
              The curve $\Curve_{w_0}$ is locally real analytic, and maximal in the sense of~\cref{global ift}\ref{maximal part}.
              \label{vi well-behaved part}
    \end{enumerate}
\end{theorem}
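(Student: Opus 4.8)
The argument runs parallel to the proofs of \cref{global ift} and \cref{intro point vortex theorem}, now bifurcating from the irrotational solitary wave $(u_0,0)$ rather than the uniform flow. The preliminary observation is that $\F_u(u_0,0)\colon \X \to \Y$ is an isomorphism: since $\beta = 0$ at the base point, the function $a$ of~\eqref{definition a} equals $1$ identically for every $\kappa$, so $\F_{1\kappa}(u_0,0) = 0$; as $\F_{2\kappa} \equiv -1$, the operator $\F_u(u_0,0)$ is lower block-triangular with diagonal entries $\F_{1w}(u_0,0)$ and $-1$, the first of which is an isomorphism $\W \to \Y_1$ by hypothesis. (The requirement $(u_0,0) \in \F^{-1}(0)$ is in fact automatic for irrotational solitary waves, because $w_0$ vanishes on the bed $\brac{\eta = 0}$, whence $w_{0,\xi\xi}(0) = 0 = \F_2(u_0,0)$.) Applying~\cref{homoclinic global ift} then produces a global $C^0$ curve $\Curve_{w_0} \subset \F^{-1}(0) \cap \Open$ through $(u_0,0)$ that is real analytic and maximal as claimed in~\cref{vi well-behaved part}, along which $\F_u$ is Fredholm of index zero, and which obeys the limiting dichotomy~\ref{blowup alternative}--\ref{loop alternative}. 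The symmetries in~\cref{vi symmetry part} follow exactly as for \cref{global ift}, from local uniqueness together with the invariance~\eqref{symmetry F} of $\F$ under $(w,\kappa,\beta) \mapsto (w,-\kappa,-\beta)$, which fixes the base point $(w_0,0,0)$.

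The only new input needed for the monotonicity~\cref{vi monotonicity part} is that the base wave $(u_0,0)$ is itself strictly monotone: this holds because every nontrivial irrotational solitary wave with $F^2 > 1$ is strictly monotone~\cite{craig1988symmetry} (as invoked in the proof of~\cref{closed property lemma}), or, in the degenerate case $w_0 = 0$, because some nearby wave on $\Curve_\loc$ is strictly monotone by~\cref{local conformal monotonicity lemma,monotonicity equivalent lemma}. Arguing as in~\cref{nodal property corollary}, the subset of $\Curve_{w_0}$ consisting of strictly monotone waves together with the trivial flow is nonempty, open by~\cref{open property lemma}, and closed by~\cref{closed property lemma}, hence all of the connected curve $\Curve_{w_0}$. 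With monotonicity along $\Curve_{w_0}$ secured, the loss-of-compactness alternative~\ref{loss of compactness alternative} is excluded as in~\cref{precompactness corollary}, since via~\cref{compactness or front lemma} it would yield a monotone irrotational front, which does not exist; and the loss-of-Fredholmness alternative~\ref{loss of fredholmness alternative} is excluded verbatim as in~\cref{global fredholm lemma}, whose proof uses only the decay of $w$ at infinity and the invertibility of $\F_{1w}(0,0)$. This establishes~\cref{vi alternatives part}: either blowup~\ref{blowup alternative} or the closed loop~\ref{loop alternative}.

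For the loop criterion~\cref{vi closed loop part}, suppose first $\Curve_{w_0}$ is a closed loop of minimal period $T$, so that its parameterization is injective on $[0,T)$. Near $s = 0$ the isomorphism $\F_u(u_0,0)$ lets us reparameterize $\Curve_{w_0}$ locally as a graph over $\beta$, so $\beta(s)$ assumes both signs near $s = 0$; by continuity of $\beta$ on $\R/T\Z$ and the intermediate value theorem, there is $s_1 \in (0,T)$ with $\beta(s_1) = 0$. On $\brac{\beta = 0}$ the vortex equation~\eqref{vortex equation} and the vanishing of $w$ on the bed force $\kappa = 0$, hence $\gamma = 0$ by~\eqref{definition gamma}, so $(u(s_1),\beta(s_1))$ is an irrotational solitary wave, distinct from $(u_0,0)$ by injectivity on $[0,T)$. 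For the converse, let $(u_1,0) \neq (u_0,0)$ be a non-degenerate irrotational solitary wave lying on $\Curve_{w_0}$. Then $\F_{1w}(u_1,0)$, and hence by the same triangular argument $\F_u(u_1,0)$, is an isomorphism, so repeating the construction of the first paragraph at $(u_1,0)$ yields a maximal, locally analytic, Fredholm-index-zero curve $\Curve_{w_1}$ through $(u_1,0)$ that enjoys the symmetry of~\cref{vi symmetry part} about its base point. Since $(u_1,0) \in \Curve_{w_0}$, maximality of $\Curve_{w_1}$ gives $\Curve_{w_0} \subseteq \Curve_{w_1}$, and then $(u_0,0) \in \Curve_{w_1}$ together with maximality of $\Curve_{w_0}$ gives the reverse inclusion, so $\Curve_{w_1} = \Curve_{w_0}$. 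If $\Curve_{w_0}$ were not a closed loop, its parameterization would be injective on $\R$; comparing how the reflection $(w,\kappa,\beta) \mapsto (w,-\kappa,-\beta)$ acts in the two symmetric parameterizations --- the one of $\Curve_{w_1}$ centered at $(u_1,0)$ and the one of $\Curve_{w_0}$ centered at $(u_0,0)$ --- the change-of-parameter between them must be an odd function of its argument, which forces $(u_1,0) = (u_0,0)$, a contradiction. Hence $\Curve_{w_0}$ is a closed loop.

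Almost every step here simply re-runs a lemma already established for the bifurcation from the uniform flow, so the proof is mostly bookkeeping. The one genuinely delicate point is the converse half of the loop criterion: one must know that a non-loop curve carries an injective parameterization, so that the reflection $(w,\kappa,\beta) \mapsto (w,-\kappa,-\beta)$ --- which fixes both irrotational waves $(u_0,0)$ and $(u_1,0)$ --- has a unique fixed point there. It is precisely to obtain a well-defined symmetric maximal curve $\Curve_{w_1}$ at the second irrotational wave, and thereby to run this fixed-point argument, that the non-degeneracy hypothesis is needed.
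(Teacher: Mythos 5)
Your handling of the isomorphism at the base point, the application of \cref{homoclinic global ift}, the symmetry, the monotonicity via \textcite{craig1988symmetry} together with \cref{open property lemma,closed property lemma}, and the exclusion of the loss-of-compactness and loss-of-Fredholmness alternatives via \cref{precompactness corollary,global fredholm lemma} all coincide with the paper's proof, as does (up to comparable terseness about distinctness) the forward half of the loop criterion. The divergence, and the genuine gap, is in the converse half of \cref{vortex injection theorem}\ref{vi closed loop part}. Your argument hinges on the claim that if $\Curve_{w_0}$ is not a closed loop then its parameterization is injective on $\R$ (and likewise for the second curve $\Curve_{w_1}$), so that a change-of-parameter map $\phi$ with $P_0(s)=P_1(\phi(s))$ is well defined and forced to be odd. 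But global injectivity is not among the conclusions of \cref{homoclinic global ift}, which only provides local analytic reparameterization, Fredholmness of index $0$, the limiting alternatives, and maximality. Local uniqueness rules out self-intersections only at points where $\F_u$ is an isomorphism; at parameter values where $\F_u$ has a nontrivial kernel the curve may in principle cross itself without being periodic. Without injectivity of $P_1$, the step $P_1(\phi(-s))=P_1(-\phi(s))\Rightarrow\phi(-s)=-\phi(s)$ (and already the definition of $\phi$) fails, and with it the contradiction $(u_1,0)=(u_0,0)$.

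The paper's converse avoids the second curve and the injectivity issue entirely, and you could repair your proof the same way: since $\beta(s_1)=0$ forces $\kappa(s_1)=0$, the point $(u_1,0)$ is a \emph{fixed point} of the reflection $(w,\kappa,\beta)\mapsto(w,-\kappa,-\beta)$, so the symmetry of $\Curve_{w_0}$ about $s=0$ gives $(u(-s_1),\beta(-s_1))=(w(s_1),-\kappa(s_1),-\beta(s_1))=(u(s_1),\beta(s_1))$. Hence the arc of $\Curve_{w_0}$ corresponding to $s\in[-s_1,s_1]$ has coinciding endpoints and is a closed curve. Non-degeneracy of $(u_1,0)$ is then used exactly where you would need injectivity: by the local implicit function theorem, $\Open\cap\F^{-1}(0)$ is a single real-analytic arc near $(u_1,0)$, so the two ends of the arc glue analytically there, making it a closed, locally real-analytic curve inside $\Curve_{w_0}$, which must therefore be all of $\Curve_{w_0}$; in particular $\Curve_{w_0}$ is a closed loop. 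Your construction of $\Curve_{w_1}$ and the identification $\Curve_{w_1}=\Curve_{w_0}$ by maximality are correct but ultimately unnecessary once this fixed-point observation is made.
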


\begin{proof}
    As in the proof of~\cref{small-amplitude theorem}, we compute that
    \[
        \F_u(u_0,0) =
        \begin{pmatrix}
            \F_{1w}(u_0,0) & 0  \\
            \F_{2w}(u_0,0) & -1
        \end{pmatrix},
    \]
    where the upper left entry is an isomorphism $\W \to \Y_1$ by hypothesis. Since the lower right entry is nonzero, and therefore invertible, the full operator $\F_u(u_0,0)$ is an isomorphism $\X \to \Y$. We need only apply the global implicit function theorem~\cite[Theorem B.1]{chen2023global} to see that a global curve $\Curve_{w_0}$ exists satisfying~\crefrange{symmetry}{maximal part} of~\cref{global ift}. In particular, this immediately gives~\cref{vi symmetry part,vi well-behaved part} of the present theorem.

    If $(u_0,0)$ is the trivial solution, then all the waves on $\Curve_{w_0} \setminus \{(0,0)\}$ are strictly monotone by \cref{local conformal monotonicity lemma}. If instead $(u_0,0)$ is a nontrivial irrotational solitary wave, then necessarily it is strictly monotone by the classical work of~\textcite{craig1988symmetry}, and the same sequence of results ensures that this monotonicity persists globally. This proves~\cref{vi monotonicity part}. Likewise, we may apply~\cref{precompactness corollary,global fredholm lemma} to exclude the potential loss of compactness and Fredholmness. Thus, as claimed in~\cref{vi alternatives part}, we are left only with the possibility of blowup or a closed loop.

    It remains to prove~\cref{vi closed loop part} of the theorem. The forward implication is true for the same reasons discussed at the beginning of this subsection, so we focus on the converse. Suppose that $(u(s_1),\beta(s_1)) = (u_1,0)$ is a non-degenerate irrotational wave distinct from $(u_0,0)$, and denote by $\msc{A}$ the part of $\Curve_{w_0}$ corresponding to $s \in [-s_1,s_1]$. Due to the symmetry in \cref{vi symmetry part}, $\msc{A}$ is a closed curve. Moreover, $\Open \cap \F^{-1}(0)$ is a real-analytic curve in a neighborhood of $(u_1,0)$ by non-degeneracy and the local implicit function theorem. Thus $\msc{A} \subset \Curve_{w_0}$ is a closed locally real-analytic curve, and therefore equal to $\Curve_{w_0}$.
\end{proof}

Clearly,~\cref{small-amplitude theorem} could have been phrased as a special case of the above result. Also, note that the hypothesis that $F^2 > 1$ is superfluous whenever $(u_0,0)$ is \emph{nontrivial}, by the sharp lower bound on the Froude number in~\cite{kozlov2021subcritical}. On the other hand, any nontrivial irrotational solitary wave satisfies the upper bound $F^2 < 2$; see~\cite{starr1947momentum,keady1974bounds}. As an immediate corollary of~\cref{vortex injection theorem}, we can therefore rule out the closed loop alternative for the curve $\Curve$ when the Froude number is larger than this.

\begin{corollary}[Blowup]
    \label{no closed loop corollary}
    Suppose that $F^2 \geq 2$. Then the global curve $\Curve$ furnished by~\cref{global ift} is not a closed loop, and the blowup alternative~\ref{blowup alternative} must occur.
\end{corollary}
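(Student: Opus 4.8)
The plan is to read off the conclusion from the list of alternatives in~\cref{global ift} once the undesirable ones have been eliminated. By~\cref{global ift}\ref{alternatives}, exactly one of the four scenarios~\ref{blowup alternative}--\ref{loop alternative} holds along $\Curve$. The loss of compactness alternative~\ref{loss of compactness alternative} was already excluded in~\cref{precompactness corollary}, and the loss of Fredholmness alternative~\ref{loss of fredholmness alternative} in~\cref{global fredholm lemma}. Hence it suffices to show that, when $F^2 \geq 2$, the closed loop alternative~\ref{loop alternative} is impossible; the blowup alternative~\ref{blowup alternative} is then forced.

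To rule out a closed loop, I would apply~\cref{vortex injection theorem} at the \emph{trivial} solution $(u_0,0) = (0,0)$. This is permissible because, as computed in the proof of~\cref{small-amplitude theorem}, $\F_{1w}(0,0) = \rest*{\parn[\Big]{-\partial_\eta + \tfrac{1}{F^2}}}_{\confS}$ is an isomorphism $\W \to \Y_1$ for every $F^2 > 1$; that is, the trivial solution is non-degenerate in the sense required there. Since the resulting curve $\Curve_{w_0}$ is maximal in the sense of~\cref{global ift}\ref{maximal part} and passes through $(0,0)$, it coincides with $\Curve$. Now invoke the loop criterion~\cref{vortex injection theorem}\ref{vi closed loop part}: if $\Curve$ were a closed loop, it would have to contain an irrotational solitary wave distinct from $(0,0)$, i.e.\ a \emph{nontrivial} irrotational solitary gravity wave. (The same conclusion can be reached directly: by~\cref{global ift}\ref{symmetry} the map $s \mapsto \beta(s)$ is odd while $s\beta(s) > 0$ for $0 < s \ll 1$ by~\cref{small-amplitude theorem}, so along a closed loop $\beta$ must vanish at a nontrivial parameter value, and there the vortex equation~\eqref{vortex equation} together with the symmetry of $w$ forces $\kappa = 0$ as well.) However, every nontrivial irrotational solitary gravity wave in water of finite depth satisfies the strict upper bound $F^2 < 2$~\cite{starr1947momentum,keady1974bounds}, contradicting the hypothesis $F^2 \geq 2$. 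Therefore $\Curve$ is not a closed loop, and the blowup alternative~\ref{blowup alternative} must occur.

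There is no serious technical obstacle here: the corollary is a bookkeeping step collecting the work of the preceding subsections, with the only external ingredient being the classical Froude-number bound $F^2 < 2$ for irrotational solitary waves, which we quote rather than reprove. The one point deserving a moment's care is the legitimacy of applying~\cref{vortex injection theorem} at the trivial solution --- equivalently, the non-degeneracy of $(0,0)$ for $F^2 > 1$ --- and the ensuing identification $\Curve_{w_0} = \Curve$ by maximality; both are immediate from what has already been established.
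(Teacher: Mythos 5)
Your proposal is correct and follows essentially the same route as the paper: apply~\cref{vortex injection theorem} at the trivial solution (legitimate since $\F_{1w}(0,0)$ is an isomorphism for $F^2>1$), identify the resulting curve with $\Curve$ by maximality, and use the loop criterion together with the bound $F^2<2$ for nontrivial irrotational solitary waves to exclude a closed loop, leaving blowup. Your extra step of separately re-excluding the loss-of-compactness and loss-of-Fredholmness alternatives is harmless but redundant, since~\cref{vortex injection theorem}\ref{vi alternatives part} already encodes those exclusions.
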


\begin{proof}
    By uniqueness and maximality, the same curve $\Curve$ can be obtained by applying~\cref{vortex injection theorem} to the trivial solution $(0,0)$ for fixed $F^2 \geq 2$. Because there exist no nontrivial solitary waves with this Froude number,~\cref{vi closed loop part} of the same theorem ensures that $\Curve$ cannot be a closed loop, and hence blowup must occur according to~\cref{vi alternatives part}.
\end{proof}

\subsection{Uniform regularity}

Next, we study the blowup alternative~\ref{blowup alternative} in more detail, in the hopes of better classifying the types of singularities that can occur in the limit along $\Curve$. The main result is as follows.

\begin{lemma}[Uniform regularity]
    \label{uniform regularity lemma}
    For all $K > 0$ and $F^2 > 1$, there exists a constant $C = C(K,F^2) > 0$ such that, if $(u,\beta) \in \F^{-1}(0) \cap \Open$ obeys the bound
    \begin{equation}
        \label{C1 bound assumption}
        \sup_{\confS}\parn[\bigg]{ \abs{\nabla w } + \frac{1}{w_\xi^2 + (1+w_\eta)^2} } + \abs{\gamma(\kappa,\beta)} < K,
    \end{equation}
    the corresponding quantity $N$ given by~\eqref{blowup} satisfies $N < C$.
\end{lemma}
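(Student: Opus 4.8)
The plan is to derive from the hypothesis~\eqref{C1 bound assumption} a priori bounds on $\norm{w}_\W$ and on $\dist((u,\beta),\partial\Open)^{-1}$. The latter splits into the three tasks dictated by the defining inequalities of $\Open$ in~\eqref{definition of O nbhd}: keeping $w_\xi^2+(1+w_\eta)^2$ away from $0$ on all of $\ol\confD$; keeping $\abs\beta$ away from $1$; and keeping $\cos(\pi\beta)-\kappa\sin(\pi\beta)$ away from $0$. Throughout I would use that an element of $\W$ is determined by its trace $h \ceq w|_\confS$ through the bounded harmonic extension vanishing on $\brac{\eta=0}$, and that the associated Dirichlet--Neumann map $h\mapsto w_\eta|_\confS$ is the fixed Fourier multiplier $\Lambda$ of order $1$ with symbol $t\coth t$.

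First I would establish the ``wave'' bounds. Since $\wh h(t)=\tfrac{\tanh t}{t}\,\wh{(w_\eta|_\confS)}(t)$ and $\tau\mapsto(\tanh\tau)/\tau$ is a smooth symbol of order $-1$ whose inverse Fourier transform lies in $L^1(\R)$, convolution gives $\norm h_{L^\infty(\confS)}\lesssim\norm{w_\eta}_{L^\infty(\confS)}<K$, and hence $\norm w_{L^\infty(\confD_+)}\le CK$ by the maximum principle. Next, $f_\zeta=1+w_\eta+iw_\xi$ is holomorphic on $\confD$, and the reflection symmetry~\eqref{symmetry f} forces $\abs{f_\zeta}$ on $\brac{\eta=-1}$ to match $\abs{f_\zeta}$ on $\confS$; a Phragm\'en--Lindel\"of (three-lines) argument then promotes the bounds $1/K<\abs{f_\zeta}^2<2K^2+2$ from $\confS$ to all of $\ol\confD$. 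This already controls the first distance requirement, and --- since $\abs{f_\zeta-1}=\abs{\nabla w}<K$ keeps $\arg f_\zeta$ bounded on $\confS$, hence (being harmonic) on $\confD$ --- it also yields $\norm{\log f_\zeta}_{L^\infty(\confD)}\le C(K)$. Finally, recasting~\eqref{Bernoulli condition} as $a^2=(w_\xi^2+(1+w_\eta)^2)(1-2w/F^2)$ and inserting the previous bounds gives $\abs a\le C(K,F^2)$ on $\confS$; in particular, by~\eqref{definition a} and~\eqref{definition gamma},
\[
    a(0)=1+\tfrac12\,\abs\gamma\,\tan\bigl(\tfrac{\pi\abs\beta}{2}\bigr)\le C(K,F^2).
\]

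The hard part is to keep $\abs\beta$ bounded away from $1$ (the bound away from $-1$ is identical), i.e.\ to show $\abs\beta\le 1-c$ for some $c=c(K,F^2)>0$. When $\abs\gamma$ has a positive lower bound this is immediate from the displayed bound on $a(0)$, since $\tan(\pi\abs\beta/2)$ is then controlled. The remaining, delicate scenario is $\gamma\to 0$ with $\abs\beta\to 1$ --- along which $\abs\kappa\to\infty$, forced by the membership condition $\cos\pi\beta>\kappa\sin\pi\beta$. I would exclude it by a blow-up argument at the vortex: assuming for contradiction that $(u_n,\beta_n)$ satisfies~\eqref{C1 bound assumption} uniformly with $\beta_n\to 1$, rescale lengths by $1-\beta_n$ about the point $i$ and pass to a limit. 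The velocity field $\partial_\zeta W/\partial_\zeta f$, whose only singularity is the simple pole at $i\beta_n$, converges to that of a single planar point vortex at unit distance from a free-streamline boundary of finite Bernoulli constant (the finiteness being exactly the content of the $a(0)$ bound), while the Helmholtz--Kirchhoff equilibrium condition~\eqref{intro f vortex advection equation} passes to the limit --- and such a vortex, having a nontrivial image, cannot be in equilibrium, analogously to the exclusion of irrotational fronts in~\cref{compactness or front lemma}. An alternative route to the same contradiction: the explicit formula shows $a-1\to 0$ in every $L^p(\confS)$, $p<\infty$, as $\gamma\to 0$; feeding this through the Bernoulli relation and the invertibility of $\F_{1w}(0,0)$ for $F^2>1$ would force $w$, hence (via interior elliptic estimates for $\log f_\zeta$ and the vortex equation~\eqref{intro f vortex advection equation}) also $\kappa$, to remain bounded, contradicting $\abs\kappa\to\infty$. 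I expect this step --- forbidding the wave-borne vortex from colliding with the free surface --- to be the principal obstacle.

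Once $\abs\beta$ is bounded away from $1$, the rest is routine. The coefficient $a(\,\cdot\,;\kappa,\beta)$ in~\eqref{definition a} then has $C^{k+3+\alpha}(\confS)$ norm bounded by $C(K,F^2)$, so the Bernoulli condition~\eqref{Bernoulli condition} becomes a fully nonlinear, uniformly elliptic boundary condition for the harmonic function $w$ (uniform ellipticity coming from the two-sided bound on $w_\xi^2+(1+w_\eta)^2$); Schauder theory for oblique-derivative problems, iterated on the order of regularity, gives $\norm w_{C^{k+3+\alpha}(\ol\confD)}=\norm w_\W\le C(K,F^2)$. Consequently $\abs{w_{\xi\xi}(i\beta)}\le\norm w_\W$, and the vortex equation~\eqref{vortex equation} forces $\abs\kappa\le C(K,F^2)$. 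Finally, on the bounded region $\brac{\abs\kappa\le C(K,F^2),\ \abs\beta\le 1-c}$ the map $(\kappa,\beta)\mapsto\abs{\gamma(\kappa,\beta)}$ is continuous and blows up along the face $\brac{\cos\pi\beta=\kappa\sin\pi\beta}$ of $\partial\Open$, so the constraint $\abs\gamma<K$ keeps $(\kappa,\beta)$ at distance $\ge c(K,F^2)>0$ from that face. Assembling the three distance bounds with the bound on $\norm w_\W$ yields $N<C(K,F^2)$.
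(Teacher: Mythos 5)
Your preparatory estimates match the paper's: the sup bound on $w$ from $|w_\eta|\le K$, the two\nobreakdash-sided bound on $|f_\zeta|=|1+w_\eta+iw_\xi|$ on all of $\ol\confD$ via harmonicity of $\log|f_\zeta|$, the bound $\sup_\confS a\le C$ from~\eqref{Bernoulli condition}, and the final step controlling $1/(\cos\pi\beta-\kappa\sin\pi\beta)$ from $|\gamma|<K$ are all there. The genuine gap is exactly the step you yourself flag as the principal obstacle: excluding $|\beta|\to 1$. Neither of your sketches is a proof. The blow-up argument rests on a Liouville-type claim --- that a point vortex in equilibrium at unit distance beneath a constant-Bernoulli free streamline cannot exist --- which is unproven and in fact suspect: under your rescaling the gravity term scales away, and the zero-gravity free-surface problem \emph{does} admit point-vortex equilibria (cf.\ \cref{exact zero gravity wave proposition} and the Crowdy-type solutions discussed in the introduction), so any nonexistence statement would need much more specific information about the limit. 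The alternative route fails where it matters: in the regime $\gamma\to0$, $\beta\to1$ the vortex $i\beta$ approaches the boundary point $i$, so interior estimates for $\log f_\zeta$ never reach it, while the coefficient $a$ in~\eqref{definition a} loses $C^1$ control near $\xi=0$ precisely in this limit ($\sup_\confS|a_\xi|\sim |\gamma|/(1+\cos\pi\beta)$ can blow up even though $\sup_\confS a$ and $|\gamma|$ stay bounded), so smallness of $a-1$ in $L^p$ plus invertibility of $\F_{1w}(0,0)$ does not yield the pointwise second-derivative control at $i\beta$ needed to bound $\kappa$. Your ordering compounds this: you apply Schauder only \emph{after} the $\beta$ bound, yet your routes to the $\beta$ bound quietly require exactly that regularity near $\xi=0$.

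The paper sidesteps the difficulty by reversing the order. The nonlinear boundary estimate of~\cite{lieberman1987nonlinear} is invoked with constants depending only on $K$, the lower bound~\eqref{a inequality}, and $\sup_\confS a$, giving $\|w\|_{C^2(\confD)}\le C(1+K)$ with no information on $\beta$; combined with the global bound $\sup_\confD (w_\xi^2+(1+w_\eta)^2)^{-1}\le K$ (which you also have) and the identity $\kappa^2=\pi^{-2}\,w_{\xi\xi}^2/(w_\xi^2+(1+w_\eta)^2)$ evaluated at $i\beta$ (using $w_\xi(i\beta)=0$ by evenness), this bounds $|\kappa|\le C$. Membership in $\Open$ then forces $0<\cos\pi\beta-\kappa\sin\pi\beta\le\cos\pi\beta+C|\sin\pi\beta|$, which is impossible for $|\beta|$ near $1$; hence $|\beta|\le 1-c(C)$ with no blow-up or Liouville argument at all. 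After that, the remaining steps (Hölder bounds on $a$, upgrading to $\|w\|_\W$, and keeping away from the face $\cos\pi\beta=\kappa\sin\pi\beta$ using $|\gamma|<K$) coincide with the tail of your proposal. To repair your plan, import precisely this observation: bound $\kappa$ \emph{before} $\beta$, via an elliptic estimate whose constants do not see $\beta$.
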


\begin{proof}
    Throughout the proof, we use $C$ as a generic positive constant depending only on $K$ and $F^2$. First, observe that $w$, $w_\xi$, and $w_\eta$ are all harmonic in $\confD$ and decay at infinity. By the maximum principle and symmetry, they are all maximized along $\confS$. Since $w$ vanishes along $\{ \eta = 0 \}$, moreover,
    \[
        \sup_{\confD}{\abs{w}} \leq \sup_{\confD}{\abs{w_\eta}} \leq K.
    \]
    The Bernoulli condition~\eqref{Bernoulli condition} then allows us to control
    \[
        \sup_{\confS}{a^2} = \sup_{\confS}{\parn*{ \parn*{ \frac{1}{2} - \frac{1}{F^2}w }\parn*{ w_\xi^2+(1+w_\eta)^2 } }} \leq C.
    \]
    On the other hand, from this, the bound~\eqref{C1 bound assumption}, the lower bound on $a$~\eqref{a inequality}, and the nonlinear estimate~\cite[Theorem 3]{lieberman1987nonlinear}, we find that
    \[
        \norm{ w }_{C^2(\confD)} \leq C (1+ \norm{ \nabla w}_{C^0(\confD)}) \leq C (1+K).
    \]
    Standard Schauder estimates for $w_\xi$ and $w_\eta$ allow us to upgrade this to control of $w$ in $C^{k+3+\alpha}(\confD)$, so that we have control of $\norm{w}_{\W} < C$.

    Now, because $w+\eta$ is harmonic in $\confD$ and $\nabla (w+\eta)$ is non-vanishing, we have that $\log{\abs{\nabla(w+\eta)}}$ is harmonic in $\confD$. Hence, by the maximum principle
    \[
        \sup_{\confD}{\frac{1}{w_\xi^2 +(1+w_\eta)^2}} = \sup_{\confS}{\frac{1}{w_\xi^2 +(1+w_\eta)^2}} < K.
    \]
    This gives uniform control of the conformality in terms of $K$. It also furnishes a bound on $\kappa$: recall from~\eqref{definition F} that
    \[
        \kappa^2 = \frac{1}{\pi^2} \rest[\bigg]{\frac{w_{\xi\xi}^2}{(1+w_\eta)^2}}_{\zeta = i\beta} = \frac{1}{\pi^2} \rest[\bigg]{\frac{w_{\xi\xi}^2}{w_\xi^2+ (1+w_\eta)^2}}_{\zeta = i\beta} \leq C.
    \]
    Then, since $(u,\eta) \in \Open$, we have
    \[
        \cos{(\pi \beta)} + C \abs{ \sin{(\pi\beta)} } \geq \cos{(\pi\beta)} - \kappa \sin{(\pi\beta)} > 0,
    \]
    and thus $\abs{\beta}$ is bounded away from $1$ in terms of $C$. Likewise, the boundedness of $\kappa$ implies there exists $\varepsilon = \varepsilon(C) > 0$ such that $\cos{(\pi\beta)} - \kappa \sin{(\pi\beta)} > 1/2$ for $\abs{\beta} < \varepsilon$. On the other hand, from the definition of $\gamma$ in~\eqref{definition gamma} we have
    \[
        \frac{1}{\cos{(\pi\beta)} - \kappa \sin{(\pi\beta)}} = -\frac{\gamma}{4\sin{(\pi\beta)}} < C \qquad \textrm{for } \varepsilon \leq \abs{\beta} < 1.
    \]
    Thus, recalling the definition of $\Open$ in~\eqref{definition of O nbhd},
    \[
        \frac{1}{\dist((u,\beta), \partial\Open)} \lesssim \sup_{\confD}{\frac{1}{w_\xi^2 +(1+w_\eta)^2}} + \frac{1}{1-\abs{\beta}} + \frac{1}{\cos{(\pi\beta)}-\kappa\sin{(\pi\beta)}} < C.
    \]

    Combining all of the above estimates, we conclude that
    \[
        N = \norm{ w }_{\W} + \abs{\kappa} + \frac{1}{\dist((u,\beta), \partial\Open)} < C. \qedhere
    \]
\end{proof}

\subsection{Proof of \texorpdfstring{\cref{intro point vortex theorem}}{Theorem~\ref{intro point vortex theorem}}}

Assembling the results of the previous sections, we can now prove the main theorem on the existence of wave-borne point vortices.

\begin{proof}[Proof of~\cref{intro point vortex theorem}]
    Let $F^2 > 1$ be given. Then by~\cref{small-amplitude theorem,global ift}, there exists a curve $\Curve$ of solitary gravity wave with submerged point vortices that bifurcates from the trivial (irrotational) flow. Moreover, $\Curve$ admits a global $C^0$ parameterization that is locally real analytic. Each solution on $\Curve$ exhibits the claimed symmetry properties by the choice of function spaces, and their monotonicity was proved in~\cref{nodal property corollary}.

    We must also verify that each of the solutions on $\Curve$ is physical in that the corresponding mapping $f(s)$ is injective on $\ol{\confD}$ and $\inf_{\confD} \abs{\partial_\zeta f(s)} > 0$. But a positive lower bound on $\abs{\partial_\zeta f(s)}$ follows by construction, as $\Curve \subset \Open$. To prove injectivity, by the Darboux--Picard theorem~\cite[Corollary 9.16]{burckel1979introduction}, it suffices to show that $\rest{f(s)}_{\partial\confD}$ is injective. In fact, this is a consequence of the monotonicity established in~\cref{nodal property corollary}; see the proof of~\cite[Theorem 1.1]{haziot2023large}.

    Consider next the limiting behavior at the extreme of the curve. By~\cref{precompactness corollary} and~\cref{global fredholm lemma}, neither the loss of compactness~\ref{loss of compactness alternative} nor loss of Fredholmness~\ref{loss of fredholmness alternative} alternatives are possible, so~\cref{global ift} tells us that either we have blowup~\ref{blowup alternative} or $\Curve$ is a closed loop. The uniform bounds furnished by~\cref{uniform regularity lemma} allow us to conclude that, were blowup~\ref{blowup alternative} to happen, so must~\eqref{intro theorem blowup alternative}. If instead the closed loop alternative occurs, then by~\cref{vortex injection theorem}\ref{vi closed loop part}, we know $\Curve$ must contain an irrotational solitary wave distinct from the starting trivial one. Finally, we saw in~\cref{no closed loop corollary} that it is impossible for $\Curve$ to be a closed loop if $F^2 > 2$. This completes the proof.
\end{proof}

\section{Periodic wave-borne point vortices}
\label{numerics section}

The main purpose of this section is to numerically investigate the periodic analogue of the solitary wave-borne point vortex problem~\eqref{intro Euler-Kirchhoff-Helmholtz}. Physically, this corresponds to a traveling periodic wave carrying a single point vortex per period. The conformal description~\eqref{conformal domain definition} and~\eqref{symmetry f} stays the same, while $2\lambda$-periodicity of $w$ in $\xi$ and the normalizing condition
\[
    w(\lambda + i) = 0
\]
replace the asymptotic condition~\eqref{asymptotics f}, for some $\lambda > 0$. The preimages of the physical point vortices are now the equally spaced points
\[
    \brac[\big]{ 2 n \lambda + i \beta : n \in \Z} \subset \confD_+.
\]
As long as $\lambda$ is sufficiently large, we expect the periodic solutions to be good approximations of the solitary ones.

Recasting the governing equations in terms of the unknowns $(w,\kappa,\beta)$ requires replacing $W$ in~\eqref{definition complex potential} with the complex potential for the $2\lambda$-periodic strip. Towards that end, it is convenient to introduce two special functions, with which we may give an explicit representation formula for $W$.

\begin{definition}
    Given a lattice $\Lambda \subset \C$, the entire function
    \[
        \wsigma(z) = \wsigma(z;\Lambda) \ceq z \prod_{\ell \in \Lambda \setminus \brac{0}} \parn*{1 - \frac{z}{\ell}}\exp\parn*{\frac{z^2}{2\ell^2} + \frac{z}{\ell}}
    \]
    and its logarithmic derivative $\wzeta \ceq \wsigma_z / \wsigma$, are known as the \emph{Weierstrass sigma} and \emph{zeta functions}, respectively.
\end{definition}
\begin{proposition}[Periodic complex potential]
    For each half-period $\lambda > 0$, let
    \begin{equation}
        \label{periodic complex potential}
        W= W(\zeta; \gamma, \beta, \lambda) \ceq \frac{\gamma}{2\pi i}\parn*{
            \log\parn*{
                \frac{\wsigma(\zeta - i \beta)}{\wsigma(\zeta + i \beta)}} + 2\beta \wzeta(i)\zeta
        } + \zeta,
    \end{equation}
    with $\wsigma = \wsigma(\placeholder; \Lambda)$ for the lattice $\Lambda \ceq 2\lambda \Z \times 2 \Z$. Then $W$ satisfies the following.
    \begin{enumerate}[label=\textup{(\roman*)}]
        \item $W_\zeta$ is meromorphic, with simple poles at $\pm i \beta + 2\lambda \Z$ in $\confD$ when $\beta \neq 0$, having residues $\pm\gamma$;
        \item \label{periodic flux} $\im{W}=1$ on $\confS$ and $\im{W}=0$ on $\R$;
        \item $W_\zeta$ is $2\lambda$-periodic in $\xi$, and real on both axes.
    \end{enumerate}
\end{proposition}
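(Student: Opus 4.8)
The plan is to read off all three properties from standard facts about the Weierstrass functions $\wsigma(\placeholder;\Lambda)$, $\wzeta(\placeholder;\Lambda)$ for the rectangular lattice $\Lambda = 2\lambda\Z \times 2\Z$, whose half-periods I take to be $\omega_1 = \lambda$, $\omega_2 = i$, with quasi-periods $\eta_j \ceq \wzeta(\omega_j)$. First I would record the identities to be used: $\wsigma$ is entire and odd with simple zeros exactly on $\Lambda$; $\wzeta$ (the logarithmic derivative of $\wsigma$) is odd, meromorphic, has a simple pole of residue $1$ at each lattice point, and $\wzeta(z) = z^{-1} + O(z^3)$ near $0$; the quasi-periodicity relations $\wzeta(z + 2\omega_j) = \wzeta(z) + 2\eta_j$ and $\wsigma(z + 2\omega_j) = -e^{2\eta_j(z+\omega_j)}\wsigma(z)$; and, since $\Lambda = \ol{\Lambda}$, the conjugation symmetry $\ol{\wzeta(z)} = \wzeta(\bar z)$, which together with oddness forces $\eta_2 = \wzeta(i)$ to be purely imaginary (while $\eta_1 = \wzeta(\lambda)$ is real). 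Differentiating the formula for $W$ gives the single-valued meromorphic function $W_\zeta = \frac{\gamma}{2\pi i}(\wzeta(\zeta - i\beta) - \wzeta(\zeta + i\beta) + 2\beta\wzeta(i)) + 1$, and part (i) is then immediate: its poles lie at $\zeta \in \pm i\beta + \Lambda$, of which only $\pm i\beta + 2\lambda\Z$ meet the strip $\confD$ when $\abs\beta < 1$, and the residues come straight from the expansion of $\wzeta$ at $0$.

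For (iii), the point is that in the \emph{difference} $\wzeta(\zeta - i\beta) - \wzeta(\zeta + i\beta)$ the quasi-period increments $2\eta_j$ cancel, so $W_\zeta$ is in fact $\Lambda$-elliptic; in particular it is $2\lambda$-periodic in $\xi$ and also $2i$-periodic. For the reality statement, the conjugation symmetry of $\wzeta$ combined with $\wzeta(i) \in i\R$ yields the Schwarz-type identity $\ol{W_\zeta(\bar\zeta)} = W_\zeta(\zeta)$; on the real axis this says $W_\zeta$ is real, and on $\confS = \brac{\eta = 1}$ one writes $\bar\zeta = \zeta - 2i$ and uses the $2i$-periodicity just obtained to get $\ol{W_\zeta(\zeta)} = W_\zeta(\zeta - 2i) = W_\zeta(\zeta)$, so $W_\zeta$ is real there as well.

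For (ii), since $W_\zeta$ is real on $\R$ and on $\confS$, the map $\xi \mapsto \im W(\xi + ic)$ has vanishing derivative for $c \in \brac{0,1}$, and the only period of the (otherwise multivalued) $W$ visible along these lines — namely $\int_0^{2\lambda} W_\zeta(\xi + ic)\dee\xi$ — is real, so $\im W$ is a genuine real constant on each of $\R$ and $\confS$ and it suffices to evaluate once on each. At $\zeta = 0$, oddness of $\wsigma$ gives $\wsigma(-i\beta)/\wsigma(i\beta) = -1$, hence $W(0) = \frac{\gamma}{2\pi i}\log(-1) \in \R$ and $\im W \equiv 0$ on $\R$. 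At $\zeta = i$, writing $i + i\beta = (i\beta - i) + 2\omega_2$ and applying the $\wsigma$ quasi-periodicity with $\omega_2 = i$ gives $\wsigma(i + i\beta) = e^{2i\beta\wzeta(i)}\wsigma(i - i\beta)$, so $\log(\wsigma(i - i\beta)/\wsigma(i + i\beta)) = -2i\beta\wzeta(i) + 2\pi i m$ for some $m \in \Z$; the linear correction term $2\beta\wzeta(i)\zeta$ evaluated at $\zeta = i$ cancels the $-2i\beta\wzeta(i)$, leaving $W(i) = \gamma m + i$ and therefore $\im W \equiv 1$ on $\confS$. (The degenerate case $\beta = 0$, for which $W = \zeta$, is trivial.)

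I expect the only genuinely delicate point to be that last cancellation on $\confS$: the term $2\beta\wzeta(i)\zeta$ is inserted precisely to absorb the quasi-period $2\eta_2 = 2\wzeta(i)$ that $\wsigma(\zeta - i\beta)/\wsigma(\zeta + i\beta)$ picks up as $\zeta$ crosses the strip, so that $\im W$ jumps by exactly $1$ rather than by $1$ plus a $\beta$-dependent term; one must track the quasi-periodicity exponents carefully and keep in mind the harmless additive $2\pi i\Z$ ambiguity of the logarithm, which shifts $W$ only by integer multiples of the real number $\gamma$ and so affects neither $\im W$ nor $W_\zeta$. The Legendre relation $i\eta_1 - \lambda\eta_2 = \frac{\pi i}{2}$ is not needed for (i)--(iii), but it is what makes the horizontal increment $W(\zeta + 2\lambda) - W(\zeta)$ come out to the expected real value $2\lambda$ modulo $\gamma\Z$.
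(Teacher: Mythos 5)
Your verification is correct in substance, but it follows a genuinely different route from the paper. The paper does not check the listed properties directly from the stated formula: it \emph{derives} the potential by mapping one period rectangle to the annulus $\brac{e^{-\pi/\lambda}<\abs{\zeta}<1}$ via $\zeta\mapsto e^{i\pi\zeta/\lambda}$, reading off Crowdy's modified Green's function, adjusting the additive constant so that $\im W=1$ on $\confS$, and then converting to $\wsigma$ using Lawden's identities. You instead verify the proposition as stated, using only the classical apparatus of $\wsigma,\wzeta$ for the conjugation-symmetric lattice $2\lambda\Z\times 2\Z$: the quasi-period increments cancel in $\wzeta(\zeta-i\beta)-\wzeta(\zeta+i\beta)$, so $W_\zeta$ is elliptic; the Schwarz identity $\ol{W_\zeta(\ol\zeta)}=W_\zeta(\zeta)$ (using $\wzeta(i)\in i\R$) together with the $2i$-period gives reality on $\R$ and on $\confS$; and the quasi-periodicity of $\wsigma$ across the strip is exactly absorbed by the linear term $2\beta\wzeta(i)\zeta$, so single evaluations at $\zeta=0$ and $\zeta=i$ fix the constants $0$ and $1$. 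Your argument is self-contained and makes transparent why the correction term $2\beta\wzeta(i)\zeta$ is there; the paper's route instead explains where the formula comes from and ties it to the Green's-function/prime-function framework, at the cost of outsourcing the computation to cited formulas.

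Two small points. First, ``real on both axes'' in (iii) means, as in the solitary-wave case, the real \emph{and the imaginary} axis (this is the symmetry used later to make the vertical velocity vanish on the crest line); you proved reality on $\R$ and on $\confS$ instead. The missing piece is a one-liner from facts you already recorded: $\wzeta(it)\in i\R$ for real $t$ by conjugation symmetry and oddness, so the bracket in $W_\zeta(i\eta)$ is purely imaginary and $W_\zeta(i\eta)\in\R$. Second, your closing aside is slightly off: by the Legendre relation the horizontal increment is $W(\zeta+2\lambda)-W(\zeta)=2\lambda-\gamma\beta$ modulo $\gamma\Z$, not $2\lambda$ --- consistent with the mean of the paper's Fourier series for $a$ being $1-\gamma\beta/(2\lambda)$. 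Since the increment is in any case real, this does not affect your proof of (ii).
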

\begin{proof}
    Without loss of generality, we assume that $\beta \in (0,1)$. We first map the period rectangle $\brac{\zeta \in \C : -\lambda < \xi \leq \lambda, 0 < \eta < 1 }$ to the annulus $\brac{\zeta \in \C : \exp(-\pi / \lambda) < \abs{\zeta} < 1}$ using $\zeta \mapsto \exp(i \pi \zeta / \lambda)$, taking the point vortex to $\exp(-\pi \beta / \lambda)$. On this annulus, we can use the tools of~\cite{crowdy2020solving}. Specifically, the so-called modified Green's function can be computed by combining~\cite[(14.82) and (14.92)]{crowdy2020solving}. This is almost the desired result, except that the constant value of $\im{W}$ on $\confS$ is not $1$; see~\cite[(5.20)]{crowdy2020solving}. By compensating for this, and using~\cite[(6.2.8) and (6.2.9)]{lawden1989elliptic} to express the result in terms of $\wsigma$, we arrive at~\eqref{periodic complex potential}.
\end{proof}

As before, the velocity field computed from $W$ above will automatically satisfy the (periodic) Euler equations as well as the kinematic conditions on the bed and surface. The vortex advection equation~\eqref{intro f vortex advection equation} simply becomes
\begin{subequations}
    \label{periodic wave borne point vortex problem}
    \begin{equation}
        \label{periodic vortex advection}
        \frac{w_{\xi\xi}}{1+w_\eta} = \pi i \kappa \qquad \textrm{at } \zeta = i\beta + 2\lambda \Z,
    \end{equation}
    but need only be imposed at a single vortex by periodicity. However the relationship between $\gamma$, $\kappa$, and $\beta$ will be slightly different from what we had in~\eqref{definition gamma}. Around $\zeta= i \beta$, we find
    \[
        W_\zeta(\zeta) = \frac{\gamma}{2\pi i} \frac{1}{\zeta - i \beta} + 1 + \gamma \frac{2\beta \wzeta(i) - \wzeta(2 \beta i)}{2\pi i} + O(\zeta - i \beta),
    \]
    where the first term in
    \[
        \frac{2\beta \wzeta(i) - \wzeta(2\beta i)}{2\pi i} = \frac{1}{4}\cot(\pi \beta) + \sum_{k=1}^\infty \frac{\sin(2 k \pi \beta)}{e^{2k\pi\lambda}-1}
    \]
    is the same as for the solitary case~\eqref{solitary W_zeta expansion}, while the series is a correction term for finite $\lambda$. This formula follows from the Fourier expansion of $\wzeta$, which can be found, for example, in~\cite[(8.6.19)]{lawden1989elliptic}. Performing an analogous expansion to~\eqref{solitary laurent expansion} reveals that for the periodic case, $\gamma$ must be given by
    \begin{equation}
        \label{definition gamma periodic}
        \gamma = \gamma(\kappa,\beta,\lambda) \ceq 4\parn*{
            \kappa - 4 \frac{2\beta \wzeta(i) - \wzeta(2\beta i)}{2\pi i}
        }^{-1},
    \end{equation}
    interpreted as vanishing at $\beta = 0$.

    Finally, the Bernoulli condition on $\confS$ must hold. The trace of $W_\zeta$ on $\confS$, written as a Fourier series, is
    \[
        a(\xi; \kappa, \beta) \ceq W_\zeta(\xi + i;\kappa, \beta, \lambda) = 1 - \frac{\gamma}{2\lambda}\parn*{\beta + 2 \sum_{k=1}^\infty \frac{\sinh(k \pi \beta / \lambda)}{\sinh(k \pi / \lambda)}\cos(k \pi \xi/\lambda)}
    \]
    by~\cite[\nopp 3.I.1.89]{oberhettinger1973fourier} and~\cite[(8.6.19)]{lawden1989elliptic}; compare to~\eqref{definition a}. Using this definition of $a$, the Bernoulli condition takes the form
    \begin{equation}
        \label{Bernoulli condition periodic}
        \frac{1}{2} \frac{a^2}{w_\xi^2 + (1+w_\eta)^2} + \frac{1}{F^2} w = \frac{1}{2} + Q \qquad \text{on } \confS,
    \end{equation}
\end{subequations}
where we, unlike~\eqref{Bernoulli condition}, must allow a nonzero $Q \in \R$. This Bernoulli constant will be part of the solution, but is expected to be small when $\lambda$ is sufficiently large.

\begin{figure}[htb]
    \centering
    \tikzsetnextfilename{pseudo_arc_length_continuation}
    \begin{tikzpicture}[use Hobby shortcut]
    \pgfmathsetmacro{\psize}{0.05}

    \coordinate (A) at (0,0);
    \coordinate (Atl) at (-0.943,-0.332);
    \coordinate (Atr) at (0.943,0.332);
    \coordinate (B) at (2.42672,0.71252);
    \coordinate (Btl) at (1.45,0.497);
    \coordinate (Btr) at (3.403,0.928);
    \coordinate (C) at (5,1);
    \coordinate (Ctl) at (4,1.014);
    \coordinate (Ctr) at (6,0.986);
    \coordinate (Dp) at (7.659,0.791);
    \coordinate (Dptl) at (6.674,0.964);
    \coordinate (Dptr) at (8.644,0.618);
    \coordinate (D) at (7.615,0.54);

    \draw[thick] (A) .. (B) .. (C) .. (D);
    \draw[thick, densely dashed, shorten >= -.75cm, shorten <= -.75cm] (Dp) -- (D);

    \node[below] at (B) {$\Theta_{k-2}^M$};
    \node[below] at (C) {$\Theta_{k-1}^M$};

    \draw[thick,black!50] (Atl) -- (Atr);
    \draw[thick,black!50] (Btl) -- (Btr);
    \draw[thick,black!50] (Ctl) -- (Ctr);
    \draw[thick,-Latex] (Dp) -- (Dptr) node[midway,above] {$\tilde{n}_k^M$};

    \fill (A) circle (\psize);
    \fill (B) circle (\psize);
    \fill (C) circle (\psize);
    \draw (Dp) circle (\psize) node [above left] {$\tilde{\Theta}_k^M$};
    \fill (D) circle (\psize);

\end{tikzpicture}
    \caption{We use pseudo arc-length continuation to numerically continue the bifurcation curve. At each step, a number of previously computed solutions, and their corresponding tangents, are used to generate an initial approximate solution $\tilde{\Theta}_k^M \in \R^{M+4}$ and approximate tangent $\tilde{n}_k^{M}$. A final approximate solution $\Theta_k^M$ is then sought in the affine hyperplane $\tilde{\Theta}_k^{M} + \lspan\brac{\tilde{n}_k^{M}}^\perp$.}
    \label{pseudo arc length continuation figure}
\end{figure}
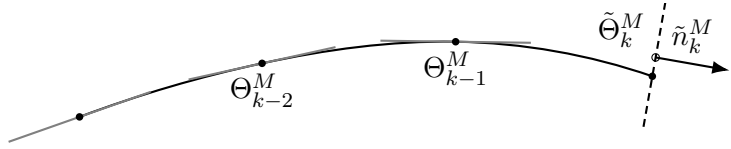

At this point, we could pursue writing down a full periodic analog of \Cref{intro point vortex theorem}, but choose not to do so. Instead, we let $M \in \N$ and look for \emph{approximate} solutions $\Theta^M=(\hat{w}^M,Q^M,\kappa^M,\beta^M)$ to~\eqref{periodic wave borne point vortex problem} with
\[
    w^M(\xi + i) = \sum_{m = 0}^M \hat{w}_m^M \cos\parn*{m \pi \xi / \lambda}
\]
by using a pseudo-spectral method and numerical pseudo arc-length continuation; see \Cref{pseudo arc length continuation figure}.

\begin{figure}[htb]
    \begin{subfigure}[t]{.65\linewidth}
        \centering
        \tikzsetnextfilename{F2_highest.tikz}
        \begin{tikzpicture}[trim axis left, trim axis right]
    \tracinglostchars=0\relax 
    \begin{axis}[
            axis on top,
            width=\textwidth,
            axis lines=left,
            axis line style={-Latex},
            x label style={at={(axis description cs:1,0)},anchor= north},
            y label style={at={(axis description cs:0,1)},anchor= east,rotate=-90},
            xlabel={$x$},
            xtick = {-5,0,5},
            ylabel={$y$},
            ytick={1,2},
            xmin=-6,
            xmax=6,
            ymin=0,
            ymax=3,
            enlarge y limits=false,
            axis equal image,
            colormap/blackwhite,]
        \addplot[contour prepared, contour prepared format=standard, contour/labels=false, contour/draw color=black!50] table {figures/l_5_F_2_b_286_N_2_10_contours.txt};
        \addplot[thick,mark=none] table{figures/l_5_F_2_b_286_N_2_10_interface.txt};
        \filldraw (0,0.6063649674380741) circle (0.03cm);
        \coordinate (top) at (axis cs: 0,3.012);
    \end{axis}
    \begin{scope}[on background layer]
        \draw[black!50] ($(top) + (-30:0.5)$) arc (-30:0:0.5);
        \node[black!50] at ($(top) + (-12:0.85)$) {\footnotesize$\SI{30}{\degree}$};
        \draw[thick,black!50] ($(top)+(-150:{sqrt(2)})$) -- (top) -- ($(top)+(-30:{sqrt(2)})$);
        \draw[black!50] (top) -- ($(top)+(0:{sqrt(3)/sqrt(2)})$);
    \end{scope}
\end{tikzpicture}
        \caption{The surface profile. We conjecture that the curve approaches a limiting highest wave with an interior angle of exactly \SI{120}{\degree}, like in Stokes' conjecture.}
        \label{F2 highest figure}
    \end{subfigure}%
    \begin{subfigure}[t]{.35\linewidth}
        \centering
        \tikzsetnextfilename{F2_beta_kappa_curve.tikz}
        \begin{tikzpicture}[trim left=(full.south west),trim right=(full.south east)]
    \tracinglostchars=0\relax 
    \begin{axis}[
            axis on top,
            unit vector ratio = 1 1.75,
            width=\textwidth,
            axis lines=center,
            axis line style={-Latex},
            y label style={at={(axis description cs:0,1)},anchor= east,},
            yticklabel style={anchor = west},
            xlabel={$\beta$},
            ylabel={$\kappa$},
            xmin=0,
            xmax=0.3,
            xtick={0.3},
            ytick={-0.1},
            ymin=-0.1,
            ymax=0.01,
            enlarge y limits=false,
            colormap/blackwhite,
            name=full]
        \addplot[thick,mark=none] table{figures/l_5_F_2_b_286_N_2_10_beta_kappa_curve.txt} coordinate[pos=1] (end);
        \draw (end) circle (0.05cm);
        \coordinate (spy_end_ur) at (axis cs:0.2957,-0.09314);
        \coordinate (spy_end_ll) at (axis cs:0.2807,-0.09914);
        \coordinate (spy_end_ul) at (axis cs:0.2807,-0.09314);
    \end{axis}
    \draw[black!50,thick] (spy_end_ll) rectangle (spy_end_ur);
    \begin{scope}[xshift=0.5cm,yshift=.5cm]
        \begin{axis}[
                unit vector ratio = 1 1.75,
                width=3.5cm,
                ticks=none,
                xmin=0.2807,
                xmax=0.2957,
                ymin=-0.09914,
                ymax=-0.09314,
                colormap/blackwhite,
                name=magnified
            ]
            \addplot[thick,mark=none] table{figures/l_5_F_2_b_286_N_2_10_beta_kappa_curve.txt} coordinate[pos=1] (end_magnified);
            \draw (end_magnified) circle (0.05cm);
            \coordinate (magnifying_glass_ll) at (axis cs:0.2957,-0.09914);
        \end{axis}
    \end{scope}
    \draw[densely dotted] (magnifying_glass_ll) -- (spy_end_ul);
\end{tikzpicture}
        \caption{The projection of the bifurcation curve into the $(\beta,\kappa)$-plane. We see two turning points.}
        \label{F2 beta kappa curve figure}
    \end{subfigure}%
    \caption{The furthest we are able to resolve numerically when $F=2$, using $\lambda = 5$ and $M = 2^{10}$. The solution on the left corresponds to the small circle on the right.}
    \label{F2 figure}
\end{figure}
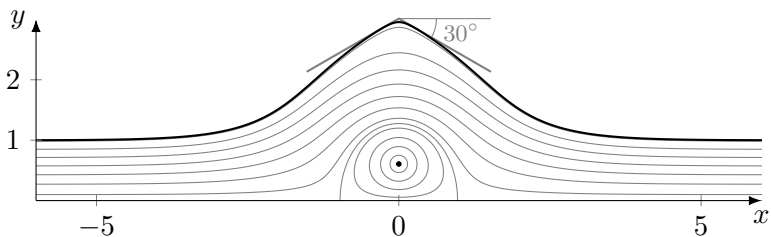
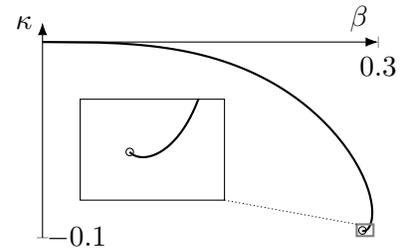

\Cref{F2 figure} shows a typical large solution when the Froude number is small enough for gravity to play a significant role. The surface appears to have an incipient corner at the crest, as is seen in the irrotational case. As the Froude number increases, however, we see from \Cref{F5 F8 figure} that the waves start overturning along the bifurcation curve. In all figures, we include the streamlines passing through equally spaced points between $i \beta$ and $i$ on $\confD_+$, along with the streamline intersecting the bottom that separates the closed streamlines from those that are not. Similar closed streamlines have been observed, for instance, for capillary-gravity waves with point vortices~\cite{varholm2016solitary} and gravity waves with constant vorticity~\cite{wahlen2009critical,kozlov2020stagnation}.

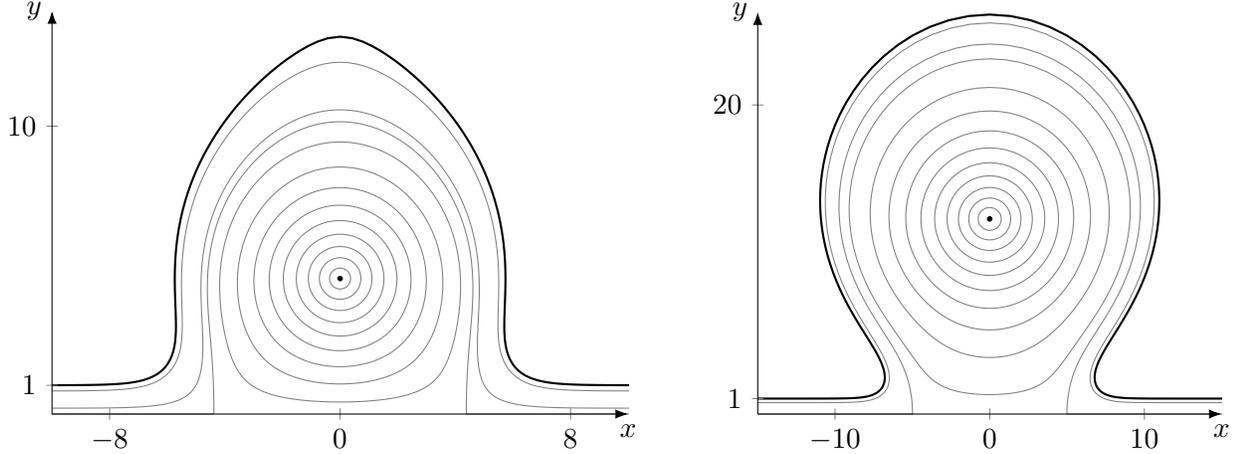
\begin{figure}[htb]
    \centering
    \begin{subfigure}[t]{.535\linewidth}
        \centering
        \tikzsetnextfilename{F5_highest.tikz}
        \begin{tikzpicture}[trim axis left,trim axis right]
    \tracinglostchars=0\relax 
    \begin{axis}[
            axis on top,
            width=\textwidth,
            axis lines=left,
            axis line style={-Latex},
            x label style={at={(axis description cs:1,0)},anchor= north},
            y label style={at={(axis description cs:0,1)},anchor= east,rotate=-90},
            xlabel={$x$},
            xtick = {-8,0,8},
            ylabel={$y$},
            ytick={1,10},
            xmin=-10,
            xmax=10,
            ymin=0,
            ymax=14,
            enlarge y limits=false,
            axis equal image,
            colormap/blackwhite,]
        \addplot[contour prepared, contour prepared format=standard, contour/labels=false, contour/draw color=black!50] table {figures/l_8_F_5_b_656_N_2_11_contours.txt};
        \addplot[thick,mark=none] table{figures/l_8_F_5_b_656_N_2_11_interface.txt};
        \filldraw (0,4.708318699952704) circle (0.03cm);
    \end{axis}
\end{tikzpicture}
        \caption{When $F = 5$, gravity has decreased sufficiently to allow the wave to barely overturn. Like in \Cref{F2 highest figure}, we display the ``highest'' resolvable solution ($\beta \approx 0.66$). Unlike \Cref{F2 beta kappa curve figure}, we detect no turning point in $(\beta,\kappa)$.}
        \label{F5 highest figure}
    \end{subfigure}%
    \begin{subfigure}[t]{.465\linewidth}
        \centering
        \tikzsetnextfilename{F8.tikz}
        \begin{tikzpicture}[trim axis left, trim axis right]
    \tracinglostchars=0\relax 
    \begin{axis}[
            axis on top,
            width=\textwidth,
            axis lines=left,
            axis line style={-Latex},
            x label style={at={(axis description cs:1,0)},anchor= north},
            y label style={at={(axis description cs:0,1)},anchor= east,rotate=-90},
            xlabel={$x$},
            xtick = {-10,0,10},
            ylabel={$y$},
            ytick={1,20},
            xmin=-15,
            xmax=15,
            ymin=0,
            ymax=26,
            enlarge y limits=false,
            axis equal image,
            colormap/blackwhite,]
        \addplot[contour prepared, contour prepared format=standard, contour/labels=false, contour/draw color=black!50] table {figures/l_10_F_8_b_884_N_2_11_contours.txt};
        \addplot[thick,mark=none] table{figures/l_10_F_8_b_884_N_2_11_interface.txt};
        \filldraw (0,12.62477469960786) circle (0.03cm);
    \end{axis}
\end{tikzpicture}
        \caption{Moving to $F = 8$ permits waves with appreciable overhang. For this solution, $\beta \approx 0.88$. Going further requires even more points for adequate resolution at the top of the ``bulb''.}
        \label{F8 figure}
    \end{subfigure}%
    \caption{Two different numerical solutions with $M = 2^{11}$; and $\lambda = 8$ and $\lambda = 10$, respectively. Note the differing length scales.}
    \label{F5 F8 figure}
\end{figure}

\subsection{The zero-gravity limit}

In the zero-gravity limit $F = \infty$, an \emph{exact} explicit solitary\footnote{The exact zero-gravity periodic waves are also given in the same article, but the expressions for them are much more involved.} wave is available, due to~\cite{crowdy2023exact}. This is particularly useful as a sanity-check of the numerical method, at least in this, admittedly special, case. The below result translates the solution to our variables. We omit the proof, which is relatively straightforward.

\begin{proposition}[Exact zero-gravity solitary wave]
    \label{exact zero gravity wave proposition}
    When $F = \infty$, the global curve from \Cref{intro point vortex theorem} can be parameterized as $\beta \mapsto (f(\placeholder;\beta),\kappa(\beta),\beta)$ with
    \[
        f(\zeta;\beta) = \zeta + \frac{8}{\pi}\tan^2\parn*{\frac{\pi}{2}\beta} \frac{\sinh\parn*{\frac{\pi}{2}\zeta}}{\cos\parn*{\frac{\pi}{2}\beta}+\cosh\parn*{\frac{\pi}{2}\zeta}} \quad \text{and} \quad \kappa(\beta) = -\frac{1}{2}\sin^3\parn*{\frac{\pi}{2}\beta}\sec\parn*{\frac{\pi}{2}\beta}
    \]
    for $\beta \in (-1,1)$. Their corresponding point vortex strength and physical altitude are
    \[
        \gamma(\beta) = -8\sin\parn*{\frac{\pi}{2}\beta}\sec^3\parn*{\frac{\pi}{2}\beta} \quad \text{and} \quad b(\beta) = \beta + \frac{4}{\pi} \tan^3\parn*{\frac{\pi}{2}\beta},
    \]
    respectively. Overturning along the curve happens exactly at $\beta = \arccos(1-\sqrt{2})/\pi$

\end{proposition}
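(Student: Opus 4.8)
The plan is to verify directly that the explicit triple $(f(\placeholder;\beta),\kappa(\beta),\beta)$ solves the wave-borne point vortex system at $F=\infty$ for every $\beta\in(-1,1)$, to identify the resulting curve with $\Curve$ using the uniqueness and maximality already built into \cref{small-amplitude theorem,global ift}, and then to read off the overturning threshold from the explicit conformal map. At $F=\infty$ the term $w/F^2$ disappears from the Bernoulli condition~\eqref{Bernoulli condition}, which therefore reduces to $\abs{\partial_\zeta f}^2=a(\placeholder;\kappa,\beta)^2$ on $\confS$, with $a$ as in~\eqref{definition a}. First I would record the structural facts: $w\ceq\im(f(\placeholder;\beta)-\placeholder)$ lies in $\W$ (harmonicity and decay are immediate, and the reflection symmetries~\eqref{symmetry f} follow from the parity of $\sinh$ and $\cosh$), and $\partial_\zeta f$ is zero-free on $\ol\confD$. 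The last point is the one genuinely clean computation: writing $C\ceq\cosh(\tfrac\pi2\zeta)$ and $k\ceq\cos(\tfrac\pi2\beta)$ one finds $\partial_\zeta f=1+4\tan^2(\tfrac\pi2\beta)\,(kC+1)/(k+C)^2$, and the quadratic $(k+C)^2+4\tan^2(\tfrac\pi2\beta)(kC+1)=0$ has vanishing discriminant, with its unique (double) root $C=-\tfrac12 k\bigl(2+4\tan^2(\tfrac\pi2\beta)\bigr)$ lying in the open left half-plane --- hence not attained by $\cosh(\tfrac\pi2\zeta)$ for $\zeta\in\ol\confD$, where $\re\cosh(\tfrac\pi2\zeta)\ge0$. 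Membership in $\Open$ is completed by the identity $\cos(\pi\beta)-\kappa(\beta)\sin(\pi\beta)=\cos^4(\tfrac\pi2\beta)>0$, which simultaneously yields $\gamma(\kappa(\beta),\beta)=-8\sin(\tfrac\pi2\beta)\sec^3(\tfrac\pi2\beta)$ from~\eqref{definition gamma}.

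The two equations are then direct verifications. Substituting the explicit $\partial_\zeta f$ and $\partial_\zeta^2 f$ into the vortex advection equation~\eqref{intro f vortex advection equation} at $\zeta=i\beta$ reduces it to an identity, confirming the stated $\kappa(\beta)$. For the Bernoulli condition, the crucial simplification is $\cosh(\tfrac\pi2(\xi+i))=i\sinh(\tfrac\pi2\xi)$: with $t\ceq\sinh(\tfrac\pi2\xi)$ and $\mu\ceq4\tan^2(\tfrac\pi2\beta)=4(1-k^2)/k^2$ one computes $\abs{\partial_\zeta f(\xi+i)}^2=1+\bigl(2\mu(k^2+t^2(2k^2-1))+\mu^2(1+k^2t^2)\bigr)/(k^2+t^2)^2$, while reducing~\eqref{definition a} (using $\cosh(\pi\xi)+\cos(\pi\beta)=2(t^2+k^2)$) gives $a(\xi;\kappa(\beta),\beta)=1+\mu/(t^2+k^2)$; the difference of the two squares collapses to $\mu t^2\bigl(4(1-k^2)-\mu k^2\bigr)/(k^2+t^2)^2=0$. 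Finally $b(\beta)=\im f(i\beta;\beta)=\beta+\tfrac4\pi\tan^3(\tfrac\pi2\beta)$ by a one-line substitution. Having shown the explicit curve lies in $\Open\cap\F^{-1}(0)$, is real analytic in $\beta$, and passes through $(\id,0,0)$ at $\beta=0$, and noting that $\F_u$ is Fredholm of index $0$ along it --- the semi-Fredholm-plus-bordering argument of \cref{global fredholm lemma} applies verbatim at $F=\infty$, since $\F_{1w}(0,0)=-\rest*{\partial_\eta}_\confS$ is still an isomorphism $\W\to\Y_1$ (its Fourier symbol $\omega\coth\omega$ is bounded below by $1$) --- the maximality of $\Curve$ (\cref{global ift}\ref{maximal part}) together with local uniqueness near the bifurcation point (\cref{small-amplitude theorem}\ref{local uniqueness part}) identifies the explicit curve with $\Curve$; since $\beta$ is one of the coordinates, it then serves as a global parameter.

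It remains to locate the onset of overturning. The free surface $\fluidS=f(\confS)$ overhangs exactly when $\xi\mapsto\re f(\xi+i;\beta)$ ceases to be monotone, i.e.\ when $\re\partial_\zeta f(\xi+i;\beta)$ changes sign; the threshold is the first $\beta$ with $\min_\xi\re\partial_\zeta f(\xi+i;\beta)=0$. Writing $u\ceq t^2\ge0$, this minimum equals $1+\mu\,g(u_*)$, where $g(u)\ceq(k^2+u(2k^2-1))/(k^2+u)^2$ and $u_*=k^2(2k^2-3)/(2k^2-1)$ is its minimizer, which lies in $(0,\infty)$ precisely when $k^2<\tfrac12$ (for $k^2\ge\tfrac12$ one has $\re\partial_\zeta f>1>0$ identically). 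Evaluating gives $\mu\,g(u_*)=-(2k^2-1)^2/(2k^4)$, so that $1+\mu g(u_*)=0$ becomes $(2k^2-1)^2=2k^4$, whose admissible solution is $k^2=1-1/\sqrt2$, i.e.\ $\cos(\pi\beta)=2k^2-1=1-\sqrt2$, i.e.\ $\beta=\arccos(1-\sqrt2)/\pi$; below this value of $\beta$ the surface is a graph, and above it the surface overhangs.

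The main obstacle here is bookkeeping rather than any real depth --- as the authors note, the argument is ``relatively straightforward'' verification --- but two points deserve genuine care: (i) confirming that the global construction of \cref{local bifurcation section,global point vortex section} really does carry over to $F=\infty$, so that ``the global curve from \cref{intro point vortex theorem}'' is well defined, which comes down to the invertibility of $-\rest*{\partial_\eta}_\confS$ on $\W$ just noted; and (ii) in the overturning analysis, checking that $g$ is genuinely globally minimized at $u_*$ on $[0,\infty)$ and that $1+\mu g(u_*)$ is strictly monotone in $\beta$ through the threshold, so that this $\beta$ is the true onset of overhang rather than merely a value at which $\re\partial_\zeta f$ happens to vanish somewhere. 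Both are elementary calculus on the explicit expressions.
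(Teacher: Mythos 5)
The paper does not actually supply a proof of this proposition (it is explicitly omitted as a "relatively straightforward" translation of the exact zero-gravity solution of Crowdy et al.\ into the paper's conformal variables), and your proposal carries out precisely that omitted verification, correctly: I checked your formula $\partial_\zeta f=1+4\tan^2(\tfrac\pi2\beta)(kC+1)/(k+C)^2$, the vanishing discriminant and the negative double root (so $\partial_\zeta f\neq0$ on $\ol\confD$ since $\re\cosh(\tfrac\pi2\zeta)\ge0$ there), the identity $\cos(\pi\beta)-\kappa(\beta)\sin(\pi\beta)=\cos^4(\tfrac\pi2\beta)$ giving the stated $\gamma(\beta)$ via~\eqref{definition gamma}, the advection condition (indeed $f_\zeta(i\beta)=\sec^4(\tfrac\pi2\beta)$ and $f_{\zeta\zeta}(i\beta)/f_\zeta(i\beta)=\pi i\kappa(\beta)$), the collapse of $a^2-\abs{f_\zeta}^2$ on $\confS$ to $\mu t^2(4(1-k^2)-\mu k^2)/(k^2+t^2)^2=0$, the value of $b(\beta)$, and the overturning computation $2k^4-4k^2+1=0$, $k^2=1-1/\sqrt2$, $\cos(\pi\beta)=1-\sqrt2$, including that $u_*$ is the global minimizer and that the minimum of $\re\partial_\zeta f$ on $\confS$ is strictly monotone in $\beta$ through the threshold. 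Your remarks on why the construction of $\Curve$ persists at $F=\infty$ (the supersolution $\dot w=y$ still makes $\F_{1w}(0,0)$ invertible, and the Fredholm argument of \cref{global fredholm lemma} is unaffected) are also sound.

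The one step that is thinner than it should be is the identification of the explicit family $\Kurve$ with \emph{all} of $\Curve$. The maximality property \cref{global ift}\ref{maximal part} only yields the inclusion $\Kurve\subset\Curve$, and local uniqueness near $(0,0)$ only gives agreement near the bifurcation point; a priori $\Curve$ could strictly contain $\Kurve$ by departing from it at a point where $\F_u$ is Fredholm of index $0$ but not invertible (a secondary bifurcation). To close this you should either verify that $\F_u$ is actually invertible at every point of the explicit family (so the local implicit function theorem forbids any departure), or appeal to the uniqueness of the analytic continuation through possible degenerate points in the underlying global-bifurcation framework, noting that the ends of $\Kurve$ already realize the blowup alternative as $\beta\to\pm1$. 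This is a gap in the justification of the phrase "the global curve can be parameterized as," not in any of the explicit computations, which are exactly what the authors' omitted proof would consist of.
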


It is easy to check that the leading order terms in \cref{leading-order part} of \cref{small-amplitude theorem}, in the limit $F \to \infty$, are consistent with the expressions from \cref{exact zero gravity wave proposition}.

From \Cref{numerical convergence figure}, we see that numerical method finds approximate solutions that are in agreement with the explicit solutions from \Cref{exact zero gravity wave proposition}. It is able to capture the surface profile very well, even for quite large waves. Another natural question that comes to mind, and which is hinted at in~\cite{crowdy2023exact}, is: Is the overhanging part of the zero-gravity surface profile asymptotically circular? We are able to answer this in the affirmative.

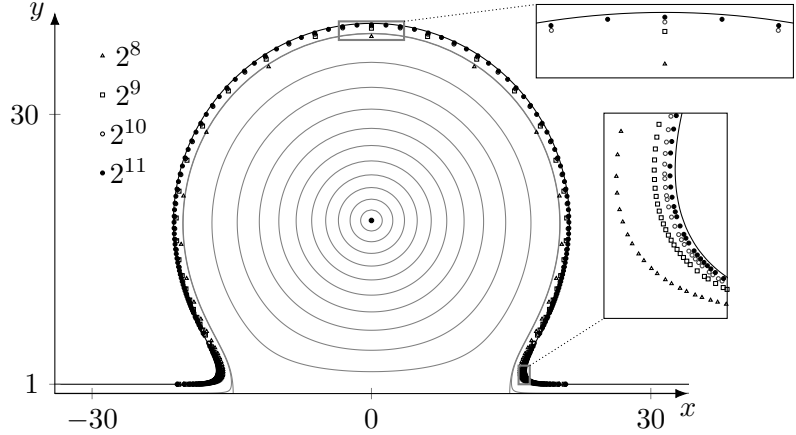
\begin{figure}[htb]
    \centering
    \tikzsetnextfilename{numerical_convergence}
    \begin{tikzpicture}
    \tracinglostchars=0\relax 
    \begin{axis}[
            axis on top,
            width=10cm,
            axis lines=left,
            axis line style={-Latex},
            x label style={at={(axis description cs:1,0)},anchor= north},
            y label style={at={(axis description cs:0,1)},anchor= east,rotate=-90},
            xlabel={$x$},
            xtick = {-30,0,30},
            ylabel={$y$},
            ytick={1,30},
            xmin=-34,
            xmax=34,
            ymin==0,
            ymax=41,
            enlarge y limits=false,
            axis equal image,
            colormap/blackwhite,
            legend entries={$2^8$,$2^9$,$2^{10}$,$2^{11}$},
            legend style={at={(axis cs:-30,30)},anchor=west,draw=none,fill=none}]
        \addplot[forget plot,contour prepared, contour prepared format=standard, contour/labels=false, contour/draw color=black!50] table {figures/l_20_F_inf_b_75_N_2_11_contours.txt};
        \addplot[mark=triangle,mark size=.75pt,only marks] table{figures/l_20_F_inf_b_75_N_2_8_interface.txt};
        \addplot[mark=square,mark size=.75pt,only marks] table{figures/l_20_F_inf_b_75_N_2_9_interface.txt};
        \addplot[mark=o,mark size=.75pt,only marks] table{figures/l_20_F_inf_b_75_N_2_10_interface.txt};
        \addplot[mark=*,mark size=.75pt,only marks] table{figures/l_20_F_inf_b_75_N_2_11_interface.txt};
        \addplot[mark=none] table{figures/F_inf_b_75_N_2_11_exact_interface.txt};
        \filldraw (0,18.603683128409692) circle (0.03cm); 
        \coordinate (spy_overturn_ll) at (axis cs:15.8,1);
        \coordinate (spy_overturn_ur) at (axis cs:17,3);
        \coordinate (spy_top_ll) at (axis cs:-3.5,38);
        \coordinate (spy_top_ur) at (axis cs:3.5,40);
    \end{axis}
    \draw[black!50,thick] (spy_overturn_ll) rectangle (spy_overturn_ur);
    \begin{scope}[xshift=7.3cm,yshift=1cm]
        \begin{axis}[
                width=5cm,
                ticks=none,
                xmin=15.8,
                xmax=17,
                ymin=1,
                ymax=3,
                axis equal image,
                colormap/blackwhite]
            \addplot[mark=triangle,mark size=.75pt,draw=none] table{figures/l_20_F_inf_b_75_N_2_8_interface.txt};
            \addplot[mark=square,mark size=.75pt,draw=none] table{figures/l_20_F_inf_b_75_N_2_9_interface.txt};
            \addplot[mark=o,black!75,mark size=.75pt,draw=none] table{figures/l_20_F_inf_b_75_N_2_10_interface.txt};
            \addplot[mark=*,mark size=.75pt,draw=none] table{figures/l_20_F_inf_b_75_N_2_11_interface.txt};
            \addplot[mark=none,smooth] table{figures/F_inf_b_75_N_2_11_exact_interface.txt};
            \coordinate (magnifying_glass_overturn_ll) at (axis cs:15.8,1);
        \end{axis}
    \end{scope}
    \draw[densely dotted] (spy_overturn_ur) -- (magnifying_glass_overturn_ll);

    \draw[black!50,thick] (spy_top_ll) rectangle (spy_top_ur);
    \begin{scope}[xshift=6.4cm,yshift=4.2cm]
        \begin{axis}[
                width=5cm,
                ticks=none,
                xmin=-3.5,
                xmax=3.5,
                ymin=38,
                ymax=40,
                axis equal image,
                colormap/blackwhite]
            \addplot[mark=triangle,mark size=.75pt,draw=none] table{figures/l_20_F_inf_b_75_N_2_8_interface.txt};
            \addplot[mark=square,mark size=.75pt,draw=none] table{figures/l_20_F_inf_b_75_N_2_9_interface.txt};
            \addplot[mark=o,black!75,mark size=.75pt,draw=none] table{figures/l_20_F_inf_b_75_N_2_10_interface.txt};
            \addplot[mark=*,mark size=.75pt,draw=none] table{figures/l_20_F_inf_b_75_N_2_11_interface.txt};
            \addplot[mark=none,smooth] table{figures/F_inf_b_75_N_2_11_exact_interface.txt};
            \coordinate (magnifying_glass_top_ul) at (axis cs:-3.5,40);
        \end{axis}
    \end{scope}
    \draw[densely dotted] (spy_top_ur) -- (magnifying_glass_top_ul);
\end{tikzpicture}
    \caption{Comparison between numerical periodic surface profiles for different values of $M$, and the corresponding \emph{exact} solitary solution (solid line), when $F=\infty$, $\beta=0.75$ and $\lambda = 20$. To reduce crowding, only a subset of computed points are included (\cite{douglas1973algorithms} with $\varepsilon=10^{-3}$). The numerical streamlines and vortex position are displayed as computed for $M=2^{11}$, and the physical period is approximately $ 2 \cdot 35$.}
    \label{numerical convergence figure}
\end{figure}

\begin{theorem}[Zero-gravity asymptotics]
    If $f(i)/i \eqc 1 + 2h(\beta)$, the surface profile from \Cref{exact zero gravity wave proposition} normalized as $\tilde{\fluidS}\ceq h(\beta)^{-1}\fluidS$ approaches a circle resting on a line as $\beta \to 1$. Specifically,

    \[
        \dist\parn[\big]{\tilde{\fluidS},\partial B_1(i) \cup \R} \to 0
    \]
    as $\beta \to 1$.
\end{theorem}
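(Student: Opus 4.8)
The plan is to argue directly from the explicit formula in \cref{exact zero gravity wave proposition}. First I would restrict $f(\placeholder;\beta)$ to $\confS=\brac{\eta=1}$: setting $\zeta=\xi+i$ and using $\sinh(\tfrac\pi2(\xi+i))=i\cosh(\tfrac\pi2\xi)$, $\cosh(\tfrac\pi2(\xi+i))=i\sinh(\tfrac\pi2\xi)$, the surface is parameterized by
\[
    z(\xi)\ceq f(\xi+i;\beta)=\xi+i+\frac{8}{\pi}\tan^2\!\parn*{\tfrac\pi2\beta}\,\frac{i\cosh(\tfrac\pi2\xi)}{\cos(\tfrac\pi2\beta)+i\sinh(\tfrac\pi2\xi)}.
\]
Evaluating at $\xi=0$ gives $f(i;\beta)=i\bigl(1+\tfrac{8}{\pi}\tan^2(\tfrac\pi2\beta)\sec(\tfrac\pi2\beta)\bigr)$, so the normalization $f(i)/i\eqc 1+2h(\beta)$ forces $h(\beta)=\tfrac4\pi\tan^2(\tfrac\pi2\beta)\sec(\tfrac\pi2\beta)$. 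Writing $c\ceq\cos(\tfrac\pi2\beta)$, so that $c\searrow0$ as $\beta\nearrow1$, together with $S\ceq\sinh(\tfrac\pi2\xi)$ and $C\ceq\cosh(\tfrac\pi2\xi)$, one finds $h(\beta)=4(1-c^2)/(\pi c^3)\to\infty$ and $\tfrac8\pi\tan^2(\tfrac\pi2\beta)/h(\beta)=2c$, so the rescaled surface $\tilde{\fluidS}=h(\beta)^{-1}\fluidS$ is traced by
\[
    \tilde z(\xi)=\frac{\xi+i}{h(\beta)}+C\,w_0(\xi),\qquad w_0(\xi)\ceq\frac{2ic}{c+iS}.
\]

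The key observation is the \emph{exact} identity $\abs{w_0(\xi)-i}=1$ for every $\xi\in\R$ and every $c\in(0,1)$: indeed $w_0-i=(S+ic)/(c+iS)$, whose numerator and denominator have the same modulus. Hence $w_0(\xi)$ lies precisely on $\partial B_1(i)$, and $\tilde z(\xi)$ is obtained from a point of $\partial B_1(i)$ by the dilation factor $C=\cosh(\tfrac\pi2\xi)\ge1$ and the translation $(\xi+i)/h(\beta)$, which vanishes as $\beta\to1$ for bounded $\xi$. It remains only to control, uniformly in $\xi$, how far $\tilde z(\xi)$ can stray from $\partial B_1(i)\cup\R$.

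To that end I would split the range of $\xi$ at the threshold $\abs S=c^{1/2}$. On the \emph{bulb} $\abs S\le c^{1/2}$ one has $\abs\xi\le\tfrac2\pi c^{1/2}$ (since $\abs S\ge\abs{\tfrac\pi2\xi}$) and therefore $0\le C-1=\cosh(\tfrac\pi2\xi)-1\le c$ once $c$ is small; using the crude bound $\abs{w_0}\le\abs{w_0-i}+1=2$ this gives
\[
    \dist\bigl(\tilde z(\xi),\,\partial B_1(i)\bigr)\le(C-1)\,\abs{w_0(\xi)}+\frac{\abs\xi+1}{h(\beta)}\le 2c+\frac{\tfrac2\pi c^{1/2}+1}{h(\beta)}.
\]
On the \emph{tail} $\abs S>c^{1/2}$ I would instead bound the height directly: since $\im w_0=2c^2/(c^2+S^2)$ and $C=\sqrt{1+S^2}$, the elementary inequality $C/(c^2+S^2)\le 2/c$, valid whenever $S^2>c$, yields
\[
    \dist\bigl(\tilde z(\xi),\,\R\bigr)\le\im\tilde z(\xi)=\frac{2c^2C}{c^2+S^2}+\frac1{h(\beta)}\le 4c+\frac1{h(\beta)}.
\]
Taking the larger of the two bounds, $\sup_{\xi\in\R}\dist(\tilde z(\xi),\,\partial B_1(i)\cup\R)\to0$ as $\beta\to1$, which is exactly the stated convergence, interpreted as the one-sided excess of $\tilde{\fluidS}$ over $\partial B_1(i)\cup\R$ (the reverse inclusion holds only pointwise, as the nearly horizontal tails reach out to infinity slowly).

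The one genuine subtlety — and the reason a single change of variables will not do — is that the convergence is intrinsically non-uniform: the circular ``eye'' is traced on the scale $\abs S\sim c$, whereas for $\abs S\gg c$ the profile is dominated by its nearly flat tails, so these two regimes must be separated. Choosing the cut-off to tend to zero but \emph{strictly slower} than $c$ (here $c^{1/2}$) is precisely what simultaneously keeps the dilation error $C-1$ small on the bulb and the height $2c^2C/(c^2+S^2)$ small on the tail; everything else reduces to the elementary inequalities above.
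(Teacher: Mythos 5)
Your proof is correct --- the normalization $h(\beta)=\tfrac4\pi\tan^2(\tfrac\pi2\beta)\sec(\tfrac\pi2\beta)$, the exact identity $\abs{w_0(\xi)-i}=1$ for $w_0=2ic/(c+iS)$, and the tail inequality $\cosh(\tfrac\pi2\xi)/(c^2+S^2)\le 2/c$ for $S^2>c$ all check out --- but it runs along a genuinely different track from the paper's. The paper uses the monotonicity of the height $Y(\xi,\beta)=2c^2\cosh(\tfrac\pi2\xi)/(c^2+\sinh^2(\tfrac\pi2\xi))$ (cf.\ \cref{nodal property corollary}) to invert it, writes the rescaled right half-profile as a graph $Y_0\mapsto\tilde X_0(Y_0,\beta)$ over the height variable, proves $\tilde X_0(Y_0,\beta)\to\sqrt{2Y_0-Y_0^2}$ locally uniformly (the right half of $\partial B_1(i)$), and confines the remaining tail to the strip $(0,\infty)\times(0,Y_0)$; you instead observe that the M\"obius factor $w_0(\xi)$ lies \emph{exactly} on $\partial B_1(i)$, so the rescaled surface is an $O(1/h)$ translation plus a $\cosh(\tfrac\pi2\xi)$-dilation of an exact circle parameterization, and then split at $\abs S=c^{1/2}$ with elementary uniform estimates. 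Your route is more self-contained (no appeal to the monotonicity result) and yields explicit rates, while the paper's buys a clean limiting graph description of the bulb. One caveat: as you yourself flag, you only bound the one-sided excess $\sup_{p\in\tilde{\fluidS}}\dist\parn[\big]{p,\partial B_1(i)\cup\R}$; that alone would also be satisfied by a profile that never came near the circle, so to substantiate ``approaches a circle resting on a line'' you should add the one-line complementary observation that on the bulb $\abs S\le c^{1/2}$ the point $w_0(\xi)$ sweeps out all of $\partial B_1(i)$ except an arc of size $O(c^{1/2})$ near the origin while $\abs{\tilde z(\xi)-w_0(\xi)}\to0$ there, so every point of the circle is approached as well --- this coverage is what the paper's locally uniform convergence of the graph parameterization delivers automatically.
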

\begin{proof}
    By symmetry, it is sufficient to consider the right half of the waves. From \Cref{exact zero gravity wave proposition} we have
    \[
        h(\beta)^{-1}\rest{(f(\zeta) - i)}_{\zeta = \xi + i}= X(\xi,\beta) + iY(\xi,\beta)
    \]
    with
    \[
        Y(\xi,\beta) \ceq 2\cos^2\parn*{\frac{\pi}{2} \beta} \frac{\cosh(\frac{\pi}{2}\xi)}{\cos^2(\frac{\pi}{2}\beta) +\sinh^2(\frac{\pi}{2}\xi)} \quad \text{and} \quad X(\xi,\beta) \ceq h(\beta)^{-1}\xi + \frac{\sinh\parn*{\frac{\pi}{2}\xi}}{\cos(\frac{\pi}{2}\beta)}Y(\xi,\beta),
    \]
    for all $\beta \in (0,1)$ and $\xi \geq 0$. Observe that $Y(\placeholder,\beta)$ is decreasing, as expected from \Cref{nodal property corollary}, and has image $(0,2]$ for every $\beta \in (0,1)$. A small computation shows that the inverse $\xi_0(Y_0,\beta) \ceq Y(\placeholder,\beta)^{-1}(Y_0)$ satisfies
    \[
        \sinh\parn*{\frac{\pi}{2} \xi_0(Y_0,\beta)} = \frac{\cos(\frac{\pi}{2}\beta)}{Y_0} \hat{X}(Y_0,\beta)
    \]
    with
    \[
        \hat{X}_0(Y_0,\beta) \ceq \sqrt{2\parn[\Bigg]{\sqrt{\cos^4\parn[\Big]{\frac{\pi}{2}\beta} + \sin^2\parn[\Big]{\frac{\pi}{2}\beta} Y_0^2}+\cos^2\parn[\Big]{\frac{\pi}{2}\beta}} - Y_0^2}
    \]
    for every $\beta \in (0,1)$ and $Y_0 \in (0,2]$.

    The corresponding real part
    \[
        \tilde{X}_0(Y_0,\beta) \ceq X\parn[\big]{\xi_0(Y_0,\beta),\beta} = h(\beta)^{-1}\xi_0(Y_0,\beta) + \hat{X}_0(Y_0,\beta)
    \]
    converges to
    \[
        X_0(Y_0) \ceq \hat{X}_0(Y_0,1) = \sqrt{2Y_0 - Y_0^2}
    \]
    as $\beta \to 1$, locally uniformly in $Y_0$. Notice that $Y_0 \mapsto X_0(Y_0) + iY_0$ parameterizes the right-half of $\partial_1B(i)$, excluding the origin. The remaining ingredient is that the aforementioned monotonicity implies that
    \[
        \brac{X(\xi,\beta) + i Y(\xi,\beta) : \xi > \xi_0(Y_0,\beta)} \subset (0,\infty) \times (0,Y_0)
    \]
    for every $\beta \in (0,1)$ and $Y_0 \in (0,2]$.
\end{proof}

\section{Solitary wave-borne hollow vortices}
\label{hollow vortex section}

In this section, we prove the existence of solitary gravity water waves with submerged hollow vortices using a vortex desingularization technique and the wave-borne point vortices constructed in \cref{local bifurcation section,global point vortex section} above. The technique is adapted from the recent work~\cite{chen2023desingularization}, which carries out a vortex desingularization procedure for planar point vortex configurations. For the water wave case, the presence of the upper free surface presents many new challenges.

\subsection{Nonlocal formulation}
\label{hollow vortex nonlocal formulation section}

Recalling \cref{hollow vortex intro section}, let us begin by assuming that we have a solitary gravity wave carrying a counter-rotating vortex pair centered at $z=\pm i b$ in the physical domain $\fluidD$. We will again view this as the image of a canonical domain under an a priori unknown conformal mapping $f$. For hollow vortices, the natural choice is to take
\[
    \confD_\rho \ceq
    \begin{dcases}
        \confD \setminus \brac{ i\beta, -i\beta}
         & \rho = 0             \\
        \confD \setminus \ol{B_{\abs{\rho}}(i\beta) \cup B_{\abs{\rho}}(-i\beta)}
         & 0 < \abs \rho \ll 1,
    \end{dcases}
\]
as illustrated in~\cref{hollow vortex configuration figure}. As before, we are doubling the domain, with the line $\brac{\eta = 0}$ corresponding to the bed and $\confSs \ceq \brac{\eta = 1}$ being the pre-image of the free surface. We call $\rho$ the conformal radius of the hollow vortex; in the desingularization procedure, it will serve as a bifurcation parameter. The boundary of the vortex will be given by $f(\confSv)$, where $\confSv = \partial B_{\abs\rho}(i\beta)$. Because we will use the implicit function theorem at $\rho = 0$, the abstract operator equation we ultimately formulate must allow $\rho$ to be negative.

\subsubsection*{Conformal mapping and layer-potentials}
We impose the ansatz
\begin{equation}
    \label{f ansatz}
    f = f^0+ \surf{f} + \rho^2 \mc{Z}^\rho[\mu],
\end{equation}
where $f^0$ is the conformal mapping for the given wave-borne point vortex. The new unknowns are now $\surf{f} \in C^{3+\alpha}(\ol{\confD}; \C)$, which is holomorphic in the strip, and a real-valued density $\mu \in C^{3+\alpha}(\T)$. Here, $\mc{Z}^\rho$ is the layer-potential operator given by
\begin{equation}
    \label{wave Z formula}
    \mc{Z}^\rho[\mu](\zeta) \ceq \frac{1}{2\pi i} \int_{\T} \parn*{ \frac{\mu(\sigma)}{\rho \sigma + i \beta - \zeta} + \frac{\mu(\ol{\sigma})}{\rho \sigma - i \beta - \zeta} } \rho \dee\sigma.
\end{equation}
By Privalov's theorem, for $\abs{\rho} > 0$ and $\ell \geq 0$, $\mc{Z}^\rho$ is bounded $C^{\ell+\alpha}(\T) \to C^{\ell+\alpha}(\ol{\confD_\rho})$. Moreover, for $\mu$ as above, $\mc{Z}^\rho[\mu]$ is a single-valued holomorphic mapping vanishing in the limit $\xi \to \pm\infty$. When there is no risk of confusion, we will abbreviate it by $\mc{Z} \mu$.

As in~\eqref{symmetry f}, the conformal mapping $f$ must be real on real and imaginary on imaginary, and so we require that $\surf{f}$ and the layer potential $\mc{Z}\mu$ each share these symmetries. Thus $\surf{w} \ceq \im{\surf{f}}$ will lie in the same space $\W$ defined in~\eqref{definition W space} and used in constructing wave-borne point vortices. Because the conformal vortex center lies on $i\R \setminus \brac{0}$, we know by~\cite[Section 3.4]{chen2023desingularization} that $\mc{Z}\mu$ has the desired symmetries provided the density $\mu$ lies in the subspace
\begin{align*}
    C_\imim^{3+\alpha}(\T) & \ceq \brac[\big]{\varphi \in C^{3+\alpha}(\T) : i^m \wh{\varphi}_m \in i\R \text{ for all } m \in \Z }. \\
    \intertext{In fact, we will make the further normalizing assumption that $\wh{\mu}_0 = 0$, hence $\mu \in \mo{C}_\imim^{3+\alpha}(\T)$. We also define}
    C_\reim^{3+\alpha}(\T) & \ceq \brac[\big]{ \varphi \in C^{3+\alpha}(\T) : i^m \wh{\varphi}_m \in \R \text{ for all } m \in \Z }.
\end{align*}

Let
\begin{alignat*}{-1}
    \surf{\mc{Z}}^\rho[\mu](\xi)  & \ceq \mc{Z}^\rho[\mu](\xi + i)             & \qquad & \text{for } \xi \in \R   \\
    \vort{\mc{Z}}^\rho[\mu](\tau) & \ceq \mc{Z}^\rho[\mu](\rho \tau + i \beta) &        & \text{for } \tau \in \T,
\end{alignat*}
be the operators found by taking the restriction of the layer-potential $\mc{Z}^\rho \mu$ to $\confSs$ and $\confSv$, respectively. The first of these is nonsingular and higher-order as long as $\confSs \cap \confSv = \varnothing$. Indeed, from the definition of the layer-potential operator~\eqref{wave Z formula}, we see that
\begin{align*}
    \norm{ \surf{\mc{Z}}^\rho \mu}_{C^0(\R)}                                                                                 & \lesssim \abs{\rho} \norm{\mu}_{C^0(\T)}, \\
    \norm{ \partial_\zeta \mc{Z}^\rho \mu }_{C^0(\confSs)} = \frac{1}{\abs{\rho}} \norm{ \surf{\mc{Z}}^\rho \mu' }_{C^0(\R)} & \lesssim \norm{ \mu' }_{C^0(\T)},
\end{align*}
where the constants above depend only on a lower bound for $\dist(\confSs, \confSv)$. We have explicitly, moreover, that
\begin{equation}
    \label{Z1 limit}
    \begin{aligned}
        \frac{1}{\rho} \surf{\mc{Z}}^\rho[\mu](\xi)
         & = \frac{1}{2\pi i} \int_{\T} \parn*{ \frac{\mu(\sigma)}{\rho \sigma + i \beta - i -\xi} + \frac{\mu(\ol{\sigma})}{\rho \sigma - i \beta - i -\xi} } \dee\sigma \\
         & \to \frac{1}{2\pi i} \int_{\T} \parn*{ \frac{\mu(\sigma)}{i \beta - i -\xi} + \frac{\mu(\ol{\sigma})}{- i \beta - i -\xi} } \dee\sigma                         \\
         & = \frac{\ol{\wh{\mu}_1}}{i\beta - i -\xi} - \frac{\wh{\mu}_1}{i\beta+ i + \xi} \qquad \text{as } \rho \searrow 0.
    \end{aligned}
\end{equation}
Conversely, via the Sokhotski--Plemelj formula, the trace of the layer-potential on $\confSv$ becomes
\begin{equation}
    \label{Z2 limit}
    \begin{aligned}
        \vort{\mc{Z}}^\rho[\mu](\tau)
         & = \frac{1}{2\pi i} \int_{\T} \frac{\mu(\sigma) - \mu(\tau)}{\sigma-\tau} \dee\sigma + \frac{1}{2\pi i} \int_{\T} \frac{\mu(\ol{\sigma})}{\rho (\sigma-\tau) - 2i \beta} \rho \dee\sigma \\
         & = \mc{C}\mu(\tau) - \rho \frac{\wh{\mu}_1}{2 i \beta} + O(\rho^2),
    \end{aligned}
\end{equation}
where $\mc{C}$ is the Cauchy-type integral operator represented by the first term on the right-hand side of the first equality. Indeed, it is easy to verify that for all $\ell \geq 0$ and $0 < \rho_0 \ll 1$, the mapping $(\mu, \rho) \mapsto \vort{\mc{Z}}^\rho \mu$ is real analytic $C^{\ell+\alpha}(\T) \times (-\rho_0, \rho_0) \to C^{\ell+\alpha}(\T)$, while $(\mu, \rho) \mapsto \rho^{-1}\surf{\mc{Z}}^\rho \mu$ is real analytic $C^{\ell+\alpha}(\T) \times (-\rho_0, \rho_0) \to C_0^{\ell+\alpha}(\R)$.

\subsubsection*{Complex relative potential}

We redefine $W = W(\zeta)$ to be the complex relative velocity potential for the wave-borne hollow vortex problem, which thus has domain $\confD_\rho$, and denote the complex relative potential~\eqref{definition complex potential} for the point vortex problem by
\[
    W_0 = W_0(\zeta;\gamma) \ceq \frac{\gamma}{2\pi i} \log\parn[\bigg]{ \frac{\sinh(\frac{\pi}{2} (\zeta-i\beta))}{\sinh(\frac{\pi}{2}(\zeta+i\beta))} } + \zeta.
\]
Note that here $W_0$ is viewed as a function of $\gamma$. When studying wave-borne point vortices, we required that $\gamma$, $\kappa$, and $\beta$ were related by~\eqref{definition gamma}. However, for hollow vortices $\kappa$ does not have any obvious relevance, so we will untether the variables, fixing $\beta$ to $\beta^0$, and allowing $\gamma$ to vary. Thus $\kappa$ is simply a function of $\gamma$.

Throughout this section, we use the convention that a superscript $0$ indicates the function is being evaluated with all parameters at their values for the starting point vortex configuration. For example, $W^0 \ceq W_0(\placeholder; \gamma^0)$. We expect that $W^0$ is the leading-order part of the complex potential, but in order to understand how the Bernoulli condition behaves in the limit $\rho \searrow 0$, it will be necessary to have a more precise description. That is the objective of the next lemma.

\begin{lemma}[Complex potential]
    \label{complex potential lemma}
    Fix $\gamma^0 \in \R$ and $\beta \in (0,1)$. Then there exist analytic mappings
    \[
        (\gamma,\rho) \mapsto \nu(\placeholder;\gamma,\rho) \in \mo{C}^{k+3+\alpha}_\imim(\T),
        \qquad
        (\gamma,\rho) \mapsto \surf W(\placeholder;\gamma,\rho) \in C^{k+3+\alpha}_0(\ol\confD;\C),
    \]
    defined in a neighborhood of $(\gamma^0,0)$ in $\R^3$ such that the complex potential
    \begin{equation}
        \label{potential ansatz}
        W = W_0 + \rho\mc{Z}^\rho \nu + \rho^2 \surf{W},
    \end{equation}
    satisfies, for $\rho > 0$,
    \begin{enumerate}[label=\textup{(\roman*)}]
        \item $W_\zeta$ is holomorphic on $\confD_\rho$ with $W_\zeta \in C^{k+2+\alpha}(\ol{\confD_\rho})$;
        \item $W-W^0$ is single-valued and $\im{W}$ is constant on each component of $\partial\confD_\rho$;
        \item $W_\zeta(\zeta) \to 1$ as $\zeta \to \infty$ in $\confD_\rho$; and
        \item \label{potential symmetry} $W_\zeta$ real on both the real and imaginary axes.
    \end{enumerate}
    Moreover, at $\rho = 0$ the density $\nu$ and its derivative $\nu_\rho$ are given explicitly by
    \begin{align}
        \label{definition nu0}
        \nu(\tau;0,\gamma)              & = \nu_0(\tau;\gamma)
        \ceq -2\re\parn*{ \parn*{ 1 + \frac{\gamma}{4} \cot(\pi \beta) } \tau } \\
        \label{definition nu1}
        \partial_\rho\nu(\tau;0,\gamma) & = \nu_1(\tau;\gamma)
        \ceq -\re\parn*{ \frac{\gamma \pi }{24i} \parn*{ 1 - 3 \csc^2(\pi \beta) } \tau^2 }.
    \end{align}
\end{lemma}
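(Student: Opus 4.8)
The plan is to reduce requirements~(i)--(iv) to a single operator equation for the density $\nu$, eliminating $\surf{W}$ along the way, and then to apply the analytic implicit function theorem at $(\gamma^0,0)$. With $W$ written in the ansatz form~\eqref{potential ansatz}, most of the structure is automatic: $W_0 = W_0(\placeholder;\gamma)$ supplies the prescribed simple poles at $\pm i\beta$ together with all of the monodromy, satisfies $W_{0\zeta}\to 1$ at infinity, and is real on both axes; $\rho\mc{Z}^\rho\nu$ is a single-valued holomorphic map on $\confD_\rho$ vanishing at infinity --- single-valuedness being precisely where the normalization $\wh{\nu}_0 = 0$ enters --- which inherits the required symmetries from $\nu\in\mo{C}^{k+3+\alpha}_\imim(\T)$; and $\rho^2\surf{W}$ is holomorphic on all of $\confD$, decaying, with $\surf{w}\ceq\im\surf{W}\in\W$ as in~\eqref{definition W space}. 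Thus the monodromy of $W$ comes entirely from $W_0$, giving the single-valuedness half of~(ii), and what remains is the kinematic requirement that $\im W$ be constant on each of the four boundary curves of $\confD_\rho$; by the built-in symmetries it suffices to impose this on $\confSs$ and on $\confSv$. On $\confSs$, since $\im W_0$ is already constant there (the property of the complex potential noted just after~\eqref{definition complex potential}), the condition becomes $\im(\rho\mc{Z}^\rho\nu + \rho^2\surf{W}) = \text{const}$ on $\confSs$; because $\surf{w}\in\W$ is the unique bounded, $\eta$-odd harmonic function on $\confD$ with a prescribed decaying trace on $\confSs$, this forces $\rest*{\surf{w}}_{\confSs} = -\rho^{-1}\im\surf{\mc{Z}}^\rho\nu$, which together with $\surf{W}$ being real on the real axis pins down $\surf{W}$ as a bounded linear function of $\nu$ --- depending real-analytically on $(\nu,\rho)$ on a full neighborhood of $\rho = 0$, by the analyticity of $(\mu,\rho)\mapsto\rho^{-1}\surf{\mc{Z}}^\rho\mu$ into $C_0^{k+3+\alpha}(\R)$ established before the lemma (with limit~\eqref{Z1 limit}) and Schauder estimates on the strip. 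Substituting this leaves a single equation for $\nu$.

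For $\confSv$ I would parameterize by $\zeta = \rho\tau + i\beta$, $\tau\in\T$, and expand each term of~\eqref{potential ansatz} in $\rho$. Integrating~\eqref{solitary W_zeta expansion} shows that on $\confSv$
\[
    W_0(\rho\tau + i\beta;\gamma) = (\text{constant in }\tau) + \tfrac{\gamma}{2\pi}\theta + \rho\,c_1(\gamma)\,\tau + \rho^2\,d_2(\gamma)\,\tau^2 + O(\rho^3),
\]
where $c_1(\gamma) = 1 + \tfrac{\gamma}{4}\cot(\pi\beta)$ and $d_2(\gamma)$ is a fixed multiple of $\gamma\bigl(2 + 3\cot^2(\pi\beta)\bigr)$; by~\eqref{Z2 limit}, $\rho\vort{\mc{Z}}^\rho\nu = \rho\mc{C}\nu + O(\rho^2)$ with the $O(\rho^2)$ correction constant in $\tau$ to that order; and $\rho^2\surf{W}(\rho\tau + i\beta) = \rho^2\surf{W}(i\beta) + O(\rho^3)$ has constant leading term. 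Applying $\proj_{>0}\im(\placeholder)$ annihilates the $\tau$-independent contributions and the real-valued term $\tfrac{\gamma}{2\pi}\theta$, so $\proj_{>0}\im\rest*{W}_{\confSv}$ is $O(\rho)$. I would then set $\mc{G}(\nu;\gamma,\rho)\ceq\rho^{-1}\proj_{>0}\im\rest*{W}_{\confSv}$, which --- once the expression for $\surf{W}$ from the first step is inserted --- is a real-analytic map from $\mo{C}^{k+3+\alpha}_\imim(\T)$ times a neighborhood of $(\gamma^0,0)$ into $\mo{C}^{k+3+\alpha}_\reim(\T)$, whose zeros for $\rho\neq 0$ are exactly the admissible $\nu$. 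At $\rho = 0$ it reduces to $\mc{G}(\nu;\gamma,0) = \proj_{>0}\im\bigl(c_1(\gamma)\tau\bigr) + \im(\mc{C}\nu)$, and the linearization $D_\nu\mc{G}(\placeholder;\gamma,0) = \im\mc{C}$ is, by a direct computation of its action on Fourier modes, an isomorphism of $\mo{C}^{k+3+\alpha}_\imim(\T)$ onto $\mo{C}^{k+3+\alpha}_\reim(\T)$. The analytic implicit function theorem then furnishes the asserted analytic $\nu = \nu(\placeholder;\gamma,\rho)$ near $(\gamma^0,0)$; $\surf{W}$ is recovered from the first step, and unwinding the construction verifies~(i)--(iv).

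The explicit leading coefficients follow by evaluating and differentiating the identity $\mc{G}(\nu(\placeholder;\gamma,\rho);\gamma,\rho)\equiv 0$ at $\rho = 0$. At $\rho = 0$ the equation reads $\im(\mc{C}\nu_0) = -\proj_{>0}\im\bigl(c_1(\gamma)\tau\bigr)$, which a short Fourier computation shows is solved by $\nu_0(\tau;\gamma) = -2\re\bigl(c_1(\gamma)\tau\bigr)$, i.e.~\eqref{definition nu0}. Differentiating once in $\rho$ gives $\im(\mc{C}\nu_1) = -\partial_\rho\mc{G}(\nu_0;\gamma,0)$, and on the right the only term surviving $\proj_{>0}$ is $\im\bigl(d_2(\gamma)\tau^2\bigr)$, coming from the $O(\rho^2)$ term of $W_0(\rho\tau + i\beta;\gamma)$ --- the remaining corrections (the $O(\rho^2)$ term in~\eqref{Z2 limit} and the value $\surf{W}(i\beta)$) being constant in $\tau$. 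Inverting $\im\mc{C}$ and using $2 + 3\cot^2(\pi\beta) = -\bigl(1 - 3\csc^2(\pi\beta)\bigr)$ then yields~\eqref{definition nu1}.

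I expect the main obstacle to be making the $\rho$-expansion on $\confSv$ fully rigorous: one must verify that, after $\proj_{>0}\im(\placeholder)$, the $O(1)$ and $\tau$-independent pieces cancel at every order so that $\proj_{>0}\im\rest*{W}_{\confSv}$ genuinely is divisible by $\rho$ with a quotient depending real-analytically on $(\nu,\gamma,\rho)$ all the way down to $\rho = 0$, while simultaneously keeping track of the Hölder-space and symmetry bookkeeping so that $\mc{G}$ maps into $\mo{C}^{k+3+\alpha}_\reim(\T)$ and $D_\nu\mc{G}(\placeholder;\gamma,0)$ is genuinely onto. Once that is in place, the implicit function theorem and the coefficient identities are routine.
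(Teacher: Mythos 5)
Your proposal is correct and follows essentially the same route as the paper: the same ansatz, the same elimination of $\surf{W}$ by solving the strip problem whose trace on $\confSs$ cancels $\rho^{-1}\im \surf{\mc{Z}}^\rho\nu$, reduction to a single nonlocal equation on $\T$ for $\nu$, the analytic implicit function theorem at $\rho=0$ with an explicitly invertible Cauchy-type linearization, and the same expansions yielding~\eqref{definition nu0} and~\eqref{definition nu1}. The only (harmless) difference is how the condition on $\confSv$ is encoded: the paper imposes the differentiated form $\re\parn*{\tau\, W_\zeta(i\beta+\rho\tau)}=0$, so the pole and $\log\rho$ contributions drop out automatically and no division by $\rho$ is needed (at the cost of the equation involving $\nu'$ and landing in $C^{k+2+\alpha}$), whereas you impose $\proj_{>0}\im W\rvert_{\confSv}=0$ divided by $\rho$, which requires the projection to kill the $\tau$-independent $\log\rho$ term and the real multivalued term but avoids any loss of derivatives in the codomain.
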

\begin{proof}
    We will reduce the problem to a nonlocal equation for $\nu$ and then apply the implicit function theorem. For $\nu \in \mo{C}^{k+3+\alpha}_\imim(\T)$, let $\mc{L}^\rho \nu \in C^{k+3+\alpha}_0(\confD;\C)$ denote the unique holomorphic function satisfying the boundary condition
    \[
        \im{\mc{L}^\rho \nu(\xi+i)} = -\im{\frac{1}{2\pi i} \int_{\T} \parn*{ \frac{\nu(\sigma)}{ \rho \sigma + i \beta - i - \xi} + \frac{\nu(\ol{\sigma})}{\rho \sigma - i \beta - i - \xi} } \dee\sigma }
        \qquad \text{for } \xi \in \R
    \]
    on the upper boundary $\confSs$ and enjoying the even symmetry claimed in~\cref{potential symmetry}. For $\rho \ne 0$, the right had side is simply $-\rho^{-1}\im{ \surf{\mc{Z}}\nu}$; see \eqref{Z1 limit}. It is easy to check that $\mc{L}^\rho$ is a bounded operator between these two spaces, depending analytically on $\rho$ in a neighborhood of zero. Setting
    \[
        \surf W = \mc{L}^\rho \nu
    \]
    in~\eqref{potential ansatz}, and using the fact that $\im W_0$ is constant on $\confSs$, we discover that
    \[
        \im W = \im W_0 + \rho\im\parn*{ \surf{\mc{Z}}^\rho \nu + \rho \mc{L}^\rho \nu }
    \]
    is also constant on $\confSs$. Differentiating the requirement that $\im W$ be constant along $\confSv$ then yields a nonlocal equation
    \begin{equation}
        \label{potential integral equation}
        \mc{F}(\nu;\gamma,\rho)(\tau) \ceq \re\parn*{ \tau \parn*{ \partial_\zeta W_0(i\beta + \rho \tau) + \vort{\mc{Z}}^\rho \nu' + \rho^2 \partial_\zeta\mc{L}^\rho \nu(i\beta + \rho \tau) } } = 0 \qquad \text{for } \tau \in \T
    \end{equation}
    for $\nu$ alone. While the explicit term $\partial_\zeta W_0$ is singular as $\rho \to 0$, using~\eqref{solitary W_zeta expansion} we find that the combination
    \[
        \re\parn*{\tau\partial_\zeta W_0(i\beta + \rho\tau)}
        = \re\parn*{\parn*{1 + \frac{\gamma}{4} \cot(\pi \beta) } \tau + \rho \frac{\gamma \pi}{24 i} \parn*{ 1 - 3 \csc^2(\pi \beta) } \tau^2 }+ O(\rho^2)
    \]
    is analytic in $\rho$, and so the mapping $\mc{F}$ is also analytic, with values in $\mo C^{k+2+\alpha}_\imim(\T)$. Here we have used the analyticity of the operators $\vort{\mc{Z}}^\rho$ discussed after \eqref{Z2 limit}, as well as some straightforward symmetry considerations. In particular, $\mc{F}$ is well-defined at $\rho=0$, where it is given by
    \begin{equation}
        \label{potential integral equation rho=0}
        \mc{F}(\nu;\gamma,0)(\tau) = \re\parn*{ \tau
            \parn*{\parn*{1 + \frac{\gamma}{4} \cot(\pi \beta) }
                + \mc C \nu'}}.
    \end{equation}
    From~\eqref{potential integral equation rho=0} it is easy to see that $\nu_0$ defined in~\eqref{definition nu0} solves~\eqref{potential integral equation} when $\rho=0$. Moreover, the linearized operator
    \[
        \mc{F}_\nu(\nu_0(\gamma^0);\gamma^0,0) = \re ( \tau \mc{C} \partial_\tau \placeholder ) \colon \mo{C}^{k+3+\alpha}_\imim(\T) \to \mo{C}^{k+2+\alpha}_\imim(\T)
    \]
    is invertible, and so we can then solve~\eqref{potential integral equation} for $\nu = \nu(\placeholder;\rho,\gamma)$ in a neighborhood of $\nu_0$ using the implicit function theorem. The formula~\eqref{definition nu1} for $\nu_1$ can be obtained in a similar way. The remaining claims are now straightforward to verify.
\end{proof}

\subsubsection*{Governing equations}

Let us next reformulate the wave-borne hollow vortex problem~\eqref{intro hollow vortex problem} in terms of these new unknowns. Pulled back to the conformal domain, the dynamic boundary conditions \eqref{intro dynamic hollow surface}, \eqref{intro dynamic hollow vortex} take the form
\begin{alignat*}{-1}
    \frac{1}{2} \frac{\abs{W_\zeta}^2}{\abs{f_\zeta}^2} + \frac{1}{F^2} \im{f} & = \frac{1}{2} + \frac{1}{F^2} & \qquad & \text{on } \confSs, \\
    \frac{1}{2} \frac{\abs{W_\zeta}^2}{\abs{f_\zeta}^2} + \frac{1}{F^2} \im{f} & = q                           &        & \text{on } \confSv,
\end{alignat*}
where we recall that $\confSs \ceq \brac{ \eta = 1 }$ is the pre-image of the upper boundary, $\confSv \ceq \partial B_\rho(i\beta)$, and $q \in \R$ is the Bernoulli constant. The complex potential $W$ is given by \cref{complex potential lemma}, and is completely determined by the parameters $\rho,\gamma,\beta$. Note that we have used the limiting behavior at infinity to explicitly evaluate the Bernoulli constant on $\confSs$. We can simplify the condition on $\confSs$ somewhat further by noting that the starting solitary wave with submerged point vortex itself satisfies
\[
    \frac{1}{2} \frac{\abs{W_\zeta^0}^2}{\abs{f_\zeta^0}^2} + \frac{1}{F^2} \im{f^0} = \frac{1}{2} + \frac{1}{F^2} \qquad \text{on } \confSs.
\]
Thus, inserting the ansatz for $f$ from~\eqref{f ansatz}, the dynamic condition on the upper free surface becomes
\[
    \frac{1}{2} \parn*{ \frac{\abs{W_\zeta}^2}{\abs{f_\zeta}^2} -\frac{\abs{W_\zeta^0}^2}{\abs{f_\zeta^0}^2} } + \frac{1}{F^2} \im(\surf{f} + \rho^2 \mc{Z}^\rho\mu) = 0 \qquad \text{on } \confSs.
\]

On the hollow vortex boundary, however, we must contend with the unboundedness of the point vortex velocity field in any neighborhood of $\zeta = i \beta$. We therefore normalize the condition on $\confSv$ by multiplying by $\rho^2$ to ameliorate the singularity. Regrouping terms, this results in
\[
    \frac{1}{2} \frac{\rho^2 \abs{W_\zeta}^2}{\abs{f_\zeta}^2} + \rho^2 \frac{1}{F^2} \im\parn*{ f^0 + \surf{f} + \rho^2 \mc{Z}^\rho \mu } = \tilde{Q} \qquad \text{on } \confSv,
\]
where we think of $\tilde Q \in \R$ as a normalized Bernoulli constant on $\confSv$. We will make one final renormalization in the next subsection.

Stated in terms of the layer-potential operator $\mc{Z}$, the dynamic condition on the upper interface and vortex boundary take the form
\begin{alignat}{-1}
    \label{Bernoulli surface}
    \frac{1}{2} \parn*{ \frac{\surf{a}^2}{\abs{f_\zeta^0+ \partial_\zeta \surf{f} + \rho \surf{\mc{Z}}^\rho \mu' }^2} -\frac{(\surf{a}^0)^2}{\abs{f_\zeta^0}^2} } + \frac{1}{F^2} \im(\surf{f} + \rho^2 \surf{\mc{Z}}^\rho\mu) & = 0         & \qquad & \text{on } \R  \\
    \shortintertext{and}
    \label{Bernoulli vortex}
    \frac{1}{2} \frac{ \vort{a}^2 }{\abs{f_\zeta^0+ \partial_\zeta \surf{f} + \rho \vort{\mc{Z}}^\rho \mu'}^2} + \rho^2 \frac{1}{F^2} \im\parn*{ f^0 + \surf{f} + \rho^2 \vort{\mc{Z}}^\rho \mu }                              & = \tilde{Q} &        & \text{on } \T,
\end{alignat}
where $f^0$, $\surf{f}$ are evaluated at $\xi +i$ in~\eqref{Bernoulli surface} and at $i\beta + \rho \tau$ in~\eqref{Bernoulli vortex}.
Here, similarly to~\eqref{definition a},
\begin{alignat*}{-1}
    \surf{a} & = \surf{a}(\xi; \gamma, \rho) \ceq \abs{W_\zeta(\xi + i; \gamma, \rho)}                  & \qquad & \text{for all } \xi \in \R,    \\
    \vort{a} & =\vort{a}(\tau; \gamma, \rho) \ceq \rho \abs{W_\zeta(\rho \tau + i \beta; \gamma, \rho)} &        & \text{for all } \sigma \in \T,
\end{alignat*}
and $\surf{a}^0 = \surf{a}^0(\xi) \ceq \abs{W_\zeta^0(\xi+i)}$.

\subsection{Spaces and the abstract operator equation}

Let us now formulate the wave-borne hollow vortex problem~\eqref{Bernoulli surface}--\eqref{Bernoulli vortex} as an abstract operator equation. For the remainder of this section, we rename the domain and codomain of the abstract operator $\F$ given by~\eqref{definition F} and corresponding to the submerged point vortex problem as $\Open^0 \subset \X^0$ and $\Y^0$, respectively. Then, writing
\[
    u \ceq (\surf{w}, \mu, \gamma, Q),
\]
the wave-borne hollow vortex problem can be equivalently written as
\[
    \G(u,\rho) = 0
\]
for the real-analytic mapping
\[
    \G = (\G_1,\G_2) \colon \Open \subset (\X \times \R^2) \to \Y
\]
with domain and codomain being the (newly redefined) spaces
\[
    \X \ceq \W \times \mo{C}_\imim^{k+3+\alpha}(\T),
    \qquad
    \Y \ceq C_{0,\even}^{k+2+\alpha}(\confSs) \times C_\reim^{k+2+\alpha}(\T),
\]
and given explicitly by
\begin{equation}
    \label{definition G}
    \begin{aligned}
        \G_1(u,\rho) & \ceq \frac{1}{2} \parn[\bigg]{ \frac{\surf{a}^2}{\abs{f_\zeta^0+ \partial_\zeta \surf{f} + \rho \surf{\mc{Z}}^\rho \mu' }^2} -\frac{(\surf{a}^0)^2}{\abs{f_\zeta^0}^2} } + \frac{1}{F^2} \im(\surf{f} + \rho^2 \surf{\mc{Z}}^\rho\mu)         \\
        \G_2(u,\rho) & \ceq \frac{1}{2 \rho} \parn[\bigg]{ \frac{ \vort{a}^2 }{\abs{f_\zeta^0+ \partial_\zeta \surf{f} + \rho \vort{\mc{Z}}^\rho \mu'}^2} - \frac{\gamma^2}{4\pi^2} \frac{1}{\abs{\partial_\zeta f^0(i\beta) + \partial_\zeta \surf{f}(i\beta)}^2} } \\
                     & \qquad + \rho \frac{1}{F^2} \im\parn[\big]{ f^0 + \surf{f} + \rho^2 \vort{\mc{Z}}^\rho \mu } - Q.
    \end{aligned}
\end{equation}
Here, and in what follows, we often write $\surf{f}$ for the holomorphic function on $\confD$ with imaginary part $\surf{w}$. Note that the equation $\G_2 = 0$ is slightly different from the
phrasing of the dynamic condition in~\eqref{Bernoulli vortex} as a term is being subtracted in the parenthesis, and then we are dividing by $\rho$. The new Bernoulli constant $Q$ can be written explicitly in terms of $\tilde Q$, $\rho$, $\gamma$, and $\partial_\zeta \surf f(i\beta)$. We will see that this form of the dynamic condition is considerably better suited for the asymptotic expansions carried out in the next stage of the argument. The lemma below verifies that $\G$ thus defined is indeed real analytic and its zero-set corresponds to solutions of the wave-borne hollow vortex problem.

\begin{lemma}
    \label{G well-defined lemma}
    The mapping $\G$ defined by~\eqref{definition G} is real analytic $\Open \to \Y$, and if $(w^0, \kappa^0, \beta) \in \F^{-1}(0)$ represents a wave-borne point vortex, then $u^0 \ceq (0,0,\gamma^0,0) \in \Open$ satisfies $\G(u^0,0) = 0$.
\end{lemma}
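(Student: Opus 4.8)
The plan is to establish the two assertions in turn. For the real analyticity of $\G\colon\Open\to\Y$, I would note that every ingredient of~\eqref{definition G} is assembled from (i) the fixed smooth functions $f^0$, $f^0_\zeta$, $W^0_\zeta$ and their point values at $i\beta$; (ii) the bounded linear maps $\surf w\mapsto\surf f$, $\surf w\mapsto\partial_\zeta\surf f$, $\surf w\mapsto\partial_\zeta\surf f(i\beta)$, $\surf w\mapsto\im\surf f$; (iii) the layer-potential operators, for which the discussion after~\eqref{Z2 limit} gives that $(\mu,\rho)\mapsto\vort{\mc Z}^\rho\mu$ and $(\mu,\rho)\mapsto\rho^{-1}\surf{\mc Z}^\rho\mu$ are real analytic near $\rho=0$, hence so are the combinations $\rho\,\surf{\mc Z}^\rho\mu'$, $\rho\,\vort{\mc Z}^\rho\mu'$, $\rho^2\surf{\mc Z}^\rho\mu$, $\rho^2\vort{\mc Z}^\rho\mu$; and (iv) \cref{complex potential lemma}, which makes $\surf a=\abs{W_\zeta(\placeholder+i)}$ and $\rho\,W_\zeta(i\beta+\rho\,\placeholder)$ real analytic in $(\gamma,\rho)$ with values in $C^{k+2+\alpha}$. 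Since each $C^{\ell+\alpha}$ space is a Banach algebra on which $\varphi\mapsto\overline\varphi$, $\varphi\mapsto\re\varphi$, and inversion of nowhere-vanishing elements are real analytic, and since the denominators $\abs{f^0_\zeta+\partial_\zeta\surf f+\rho\,\surf{\mc Z}^\rho\mu'}^2$ and $\abs{f^0_\zeta+\partial_\zeta\surf f+\rho\,\vort{\mc Z}^\rho\mu'}^2$ are bounded away from zero on $\Open$ (as $f^0_\zeta$ is), the map $\G_1$ and the regular part of $\G_2$ are real analytic. Membership in $\Y$ is then a matter of checking decay at infinity — in $\G_1$ the difference of the two fractions tends to $0$ as $\xi\to\pm\infty$ because $\surf a,\surf a^0,f^0_\zeta\to1$ and $\partial_\zeta\surf f,\surf{\mc Z}^\rho\mu'\to0$ — and the symmetry classes defining $\Y$, which are inherited from the symmetries of $W$, $f^0$, $\surf f$ and of $\mc Z^\rho\mu$, $\mc Z^\rho\nu$ as in~\cite[Section~3.4]{chen2023desingularization}.

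The one substantive point for analyticity is the prefactor $\tfrac1{2\rho}$ in $\G_2$ multiplying the bracket
\[
    B(u,\rho)\ceq\frac{\vort a^2}{\abs{f^0_\zeta+\partial_\zeta\surf f+\rho\,\vort{\mc Z}^\rho\mu'}^2}-\frac{\gamma^2}{4\pi^2}\frac{1}{\abs{\partial_\zeta f^0(i\beta)+\partial_\zeta\surf f(i\beta)}^2},
\]
with the first term evaluated at conformal argument $i\beta+\rho\tau$. Here the observation is that $B$ vanishes at $\rho=0$ for every $u$: by~\eqref{solitary W_zeta expansion}, $\rho\,W_\zeta(i\beta+\rho\tau)\to\gamma/(2\pi i\tau)$, so $\vort a^2\to\gamma^2/(4\pi^2)$, which is independent of $\tau$ and is exactly the subtracted constant, while the denominator tends to $\abs{\partial_\zeta f^0(i\beta)+\partial_\zeta\surf f(i\beta)}^2$; thus the subtracted term is designed to be the $\rho\to0$ limit of the first term. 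Since $B$ is real analytic and $B(\placeholder,0)\equiv0$, writing $B(u,\rho)=\rho\int_0^1(\partial_\rho B)(u,t\rho)\,\dee t$ shows $\tfrac1{2\rho}B$, hence $\G_2$, is real analytic on all of $\Open$. (Note that at $\rho=0$ the density $\mu$ drops out of $\G$ entirely, consistent with the expectation that $\G=0$ reduces to the wave-borne point vortex problem there.)

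For the second assertion, set $\surf w=0$ (so $\surf f\equiv0$), $\mu=0$, $\gamma=\gamma^0$, $Q=0$, $\rho=0$, whence $f=f^0$ and $W=W^0$. Then $\surf a=\surf a^0$, all layer-potential terms vanish, and $\G_1(u^0,0)=\tfrac12\bigl((\surf a^0)^2/\abs{f^0_\zeta}^2-(\surf a^0)^2/\abs{f^0_\zeta}^2\bigr)+F^{-2}\im 0=0$ directly. Since $B(u^0,0)=0$, we have $\G_2(u^0,0)=\tfrac12(\partial_\rho B)(u^0,0)$, so it remains to show the $O(\rho)$ coefficient of $B$ along $u^0$ vanishes, and this is where the hypothesis enters. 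Using~\eqref{solitary W_zeta expansion} together with the formula~\eqref{definition nu0} for $\nu_0$ from \cref{complex potential lemma}, one computes $\rho\,W_\zeta(i\beta+\rho\tau)=\tfrac{\gamma^0}{2\pi i\tau}+\rho\,c\,(1-\tau^{-2})+O(\rho^2)$ with $c=1+\tfrac{\gamma^0}{4}\cot(\pi\beta)$ — keeping the $\rho\,\mc Z^\rho\nu$ correction is essential, as it doubles the naive $O(\rho)$ term — so that $\vort a^2=\tfrac{(\gamma^0)^2}{4\pi^2}-\tfrac{2c\gamma^0}{\pi}\rho\sin\theta+O(\rho^2)$ on $\tau=e^{i\theta}$. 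Expanding the denominator and invoking the vortex-advection equation~\eqref{intro f vortex advection equation}, $\partial_\zeta^2 f^0(i\beta)/\partial_\zeta f^0(i\beta)=\pi i\kappa^0$, gives $\abs{f^0_\zeta(i\beta+\rho\tau)}^2=\abs{f^0_\zeta(i\beta)}^2\bigl(1-2\pi\kappa^0\rho\sin\theta\bigr)+O(\rho^2)$. Dividing, the $O(\rho)$ term of the first term of $B$ is proportional to $-\tfrac{2c\gamma^0}{\pi}+\tfrac{(\gamma^0)^2\kappa^0}{2\pi}$, which vanishes because~\eqref{definition gamma} forces $c=1+\tfrac{\gamma^0}{4}\cot(\pi\beta)=\tfrac{\gamma^0\kappa^0}{4}$. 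Hence $B(u^0,\placeholder)=O(\rho^2)$ and $\G_2(u^0,0)=0$.

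The main obstacle is precisely this last computation: it is the only place where the engineered form of $\G_2$ — the particular subtracted term and the division by $\rho$ — has to mesh with the defining relations~\eqref{intro f vortex advection equation} and~\eqref{definition gamma} of a wave-borne point vortex, and the cancellation is fine enough that the layer-potential correction $\rho\,\mc Z^\rho\nu$ cannot be dropped. By comparison, the division-by-$\rho$ argument of the second paragraph is the same phenomenon one order lower and, given \cref{complex potential lemma}, is essentially routine bookkeeping.
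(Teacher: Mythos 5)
Your proof is correct, but it takes a lighter route to the analyticity than the paper does. The paper handles the $1/(2\rho)$ singularity in $\G_2$ by expanding $\vort{a}^2$ and the full quotient to second order in $\rho$ \emph{at a general} $u$, arriving at the explicit formula~\eqref{parenthetical expansion G2}, from which both the analyticity at $\rho=0$ and the vanishing at $(u^0,0)$ are read off. You instead observe that the subtracted term in $\G_2$ is by design the $\rho\to 0$ limit of the first term, so that your bracket $B(\placeholder,0)\equiv 0$ on all of $\Open$, and then divisibility of the analytic map $B$ by $\rho$ gives analyticity of $\frac{1}{2\rho}B$ at once; the only genuine computation left is the first-order expansion at the single point $u^0$, where the cancellation between the $O(\rho)$ term of $\vort a^2$ (which, as you correctly stress, is doubled by the layer-potential correction $\rho\,\mc Z^\rho\nu$ via~\eqref{definition nu0} and~\eqref{Z2 limit}) and the $O(\rho)$ term of the denominator hinges on exactly the advection equation~\eqref{intro f vortex advection equation} and the identity $1+\tfrac{\gamma^0}{4}\cot(\pi\beta)=\tfrac{\gamma^0\kappa^0}{4}$ from~\eqref{definition gamma}. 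I checked the coefficients and they are right. What your shortcut buys is economy for this lemma; what it gives up is the general expansion~\eqref{parenthetical expansion G2}, which the paper reuses immediately afterwards to compute $D_u\G_2(u^0,0)$ and $D_\rho\G_2(u^0,0)$ in \cref{hollow vortex nondegeneracy lemma} and in the proof of the asymptotics, so in the larger argument that work is only deferred, not avoided.

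One inaccuracy worth correcting: your parenthetical claim that at $\rho=0$ the density $\mu$ drops out of $\G$ entirely is false for $\G_2$. The term $\rho\,\vort{\mc Z}^\rho\mu'=\rho\,\mc C\mu'+O(\rho^2)$ in the denominator contributes at order $\rho$, so after dividing by $\rho$ the quantity $\G_2(\placeholder,0)$ retains a $\re\bigl(\ol{f^0_\zeta(i\beta)}\,\mc C\mu'\bigr)$ term; this is visible in~\eqref{parenthetical expansion G2} and is precisely why the block $D_{(\mu,Q)}\G_2(u^0,0)$ in~\eqref{DuG2 formula} is nontrivial (and invertible). The slip is harmless for your proof, since at $u^0$ one has $\mu=0$ so the term vanishes there, but the heuristic behind the remark is wrong.
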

\begin{proof}
    Clearly $\G_1(u^0,0) = 0$. We have already discussed the analyticity of $\vort{\mc{Z}}^\rho,\surf{\mc{Z}}^\rho$ in $\rho$ in the paragraph below \eqref{Z2 limit}, and the analyticity of the coefficients $\surf{a},\vort{a}$ in $(\gamma,\rho)$ follows from \cref{complex potential lemma}. Combining this with some routine symmetry arguments, we conclude that $\G$ is an analytic mapping $\Open \to \Y$ except for a potential singularity in $\G_2$ when $\rho=0$, which we now rule out through a careful expansion.

    Using the asymptotics for $W$ from~\cref{complex potential lemma}, we expand $\vort{a}^2$ to obtain
    \begin{alignat*}{-1}
        \vort{a}^2 & = \rho^2 \abs{W_{0\zeta}(\rho\tau + i\beta) + \vort{\mc{Z}}^\rho \nu'+ \rho^2 \partial_\zeta \surf{W}(\rho\tau+i\beta)}^2                                                                                                                  & \qquad &                                \\
                   & = \abs{\rho W_{0\zeta}(\rho\tau+i\beta)}^2 + 2 \rho \re \parn*{ \ol{\rho W_{0\zeta}(\rho\tau+i\beta)} \vort{\mc{Z}}^\rho \nu' } + \abs{\rho \vort{\mc{Z}}^\rho \nu'}^2 + O(\rho^4)                                                         &        &                                \\
                   & = \frac{\gamma^2}{4\pi^2} - 2 \rho \re \parn*{ \frac{\gamma}{2\pi i} \parn*{ \frac{\gamma \kappa}{4} + \vort{\mc{Z}}^\rho \nu' } \tau }                                                                                                                                              \\
                   & \quad + \rho^2 \brak*{ \frac{\kappa^2\gamma^2}{16} + \abs{\vort{\mc{Z}}^\rho \nu' }^2 + 2 \re \parn[\bigg]{ \frac{\kappa \gamma}{4} \vort{\mc{Z}}^\rho \nu' + \frac{\gamma^2 \tau^2}{48} \parn*{ 1 - 3 \csc^2(\pi \beta) } } } + O(\rho^3) &        &                                \\
                   & = \frac{\gamma^2}{4\pi^2} - 2\rho \re\parn*{ \frac{\gamma}{2\pi i} \parn*{ \frac{\gamma \kappa}{4} + \mc{C} \nu_0' } \tau } + \rho^2 \parn*{ \frac{\kappa^2\gamma^2}{16} + \abs{\mc{C} \nu_0' }^2 }                                                                                  \\
                   & \quad + 2 \rho^2 \re \parn*{ -\frac{\gamma \wh{(\nu_0')}_1 }{4 \pi \beta} - \frac{\gamma}{2\pi i} \tau \mc{C} \nu_1' + \frac{\kappa \gamma}{4} \mc{C} \nu_0' + \frac{\gamma^2 \tau^2}{48} \parn*{ 1 - 3 \csc^2(\pi \beta) } } + O(\rho^3)  &        & \text{in } C^{k+2+\alpha}(\T),
    \end{alignat*}
    where we have used~\eqref{Z2 limit} in deriving the last equality. From the forms of $\nu_0$ and $\nu_1$ given in~\eqref{definition nu0} and~\eqref{definition nu1}, we have explicitly that
    \[
        \tau \mc{C}\nu_0'(\tau) = -\parn*{ 1 + \frac{\gamma}{4} \cot(\pi \beta) } \frac{1}{\tau} = -\frac{\gamma \kappa}{4} \frac{1}{\tau}, \quad \proj_1 \nu_0' = 0, \quad \tau \mc{C} \nu_1' (\tau) = \frac{\gamma \pi}{24 i} \parn*{1 - 3 \csc^2(\pi \beta)} \frac{1}{\tau^2}.
    \]
    Therefore the above expansion for $\vort{a}^2$ simplifies to
    \begin{alignat*}{-1}
        \vort{a}^2 & = \frac{\gamma^2}{4\pi^2} - \rho \re\parn*{ \frac{\gamma^2 \kappa}{2\pi i} \tau } + \rho^2 \frac{\kappa^2\gamma^2}{8}                       & \qquad &                                \\
                   & \quad+ \rho^2 \re \parn*{ - \frac{\kappa^2 \gamma^2 \tau^2}{8} + \frac{\gamma^2 \tau^2}{12} \parn*{ 1 - 3 \csc^2(\pi \beta) } } + O(\rho^3) &        & \text{in } C^{k+2+\alpha}(\T).
    \end{alignat*}

    To ease the presentation, denote $g \ceq f_\zeta^0 + \partial_\zeta \surf{f}$. Then, looking at the parenthetical term in the definition of $\G_2$, and expanding in $\rho$ leads to
    \begin{align*}
        \frac{\vort{a}^2 }{\abs{f_\zeta^0+ \partial_\zeta \surf{f} + \rho \vort{\mc{Z}}^\rho \mu'}^2}
         & =
        \frac{\gamma^2}{4\pi^2 \abs{g(i\beta)}^2} - \rho \brak*{ \frac{ \re \parn*{ \frac{\gamma^2 \kappa}{2 \pi i} \tau} }{\abs{g(i\beta)}^2} + \frac{\gamma^2 \re \parn*{ \ol{g(i\beta)} \parn*{ g_\zeta(i\beta) \tau + \mc{C} \mu' } } }{2 \pi^2 \abs{g(i\beta)}^4} } \\
         & \qquad
        + \rho^2 \frac{ \frac{\kappa^2\gamma^2}{8} + \re \parn*{ - \frac{\kappa^2 \gamma^2 \tau^2}{8} + \frac{\gamma^2 \tau^2}{12} \parn*{ 1 - 3 \csc^2{(\pi \beta)} } } }{\abs{g(i\beta)}^2}                                                                            \\
         & \qquad
        + \rho^2 \frac{ 2 \re\parn*{ \frac{\gamma^2 \kappa}{2 \pi i} \tau } \re \parn*{ \ol{g(i\beta)} ( g_\zeta(i\beta) \tau + \mc{C} \mu' ) } }{ \abs{g(i\beta)}^4 }                                                                                                   \\
         & \qquad
        - \rho^2 \frac{\gamma^2}{4\pi^2} \frac{\abs{ g_\zeta(i\beta) \tau + \mc{C} \mu' }^2 + 2 \re \parn*{ \ol{g(i\beta)} \parn*{ \frac12 g_{\zeta\zeta}(i\beta) \tau^2 - \frac{\wh{(\mu')}_1 }{2 i \beta} } } }{\abs{g(i\beta)}^4}                                     \\
         & \qquad
        + \rho^2 \frac{\gamma^2}{4\pi^2} \frac{ 4 \brak*{ \re \parn*{ \parn*{ \ol{g(i\beta)} \parn*{ g_\zeta(i\beta) \tau + \mc{C} \mu' } } } }^2 }{\abs{g(i\beta)}^6} + \msc{R}                                                                                         \\
         & \eqc \frac{\gamma^2}{4\pi^2 \abs{g(i\beta)}^2} + \rho \mc{B}_1 + \rho^2 \mc{B}_2 + \msc{R},
    \end{align*}
    where the remainder term $\msc{R}$ is a real-analytic mapping $\Open \to C^{k+2+\alpha}(\T)$ that obeys
    \[
        \msc{R} = O\parn*{\rho^3 + \rho^2 \norm{ (\surf{f},\mu) }_{\X} + \rho \norm{ (\surf{f},\mu) }_{\X}^2 } \qquad \text{in } C^{k+2+\alpha}(\T).
    \]
    Now, using the fact that $f^0$ satisfies~\eqref{intro f vortex advection equation}, we expand the $O(\rho)$ terms $\mc{B}_1$ on the right-hand side and then rearrange to find
    \begin{equation}
        \label{parenthetical expansion G2}
        \begin{aligned}
            \frac{1}{2\rho} & \parn*{ \frac{ \vort{a}^2 }{\abs{f_\zeta^0+ \partial_\zeta \surf{f} + \rho \vort{\mc{Z}}^\rho \mu'}^2} - \frac{\gamma^2}{4\pi^2} \frac{1}{\abs{f^0_\zeta(i\beta) + \partial_\zeta \surf{f}(i\beta)}^2} }                                                                                                                                                                                                                      \\
            = \             & \frac{ \abs{ f^0_\zeta(i\beta)}^2 \re\parn*{ \frac{\gamma^2 \kappa^0}{4\pi i} \tau } }{\abs{f^0_\zeta(i\beta) + \partial_\zeta \surf{f}(i\beta)}^4} - \frac{ \re\parn*{ \frac{\gamma^2 \kappa}{4\pi i} \tau } }{\abs{f^0_\zeta(i\beta) + \partial_\zeta \surf{f}(i\beta)}^2} - \frac{\gamma^2}{4 \pi^2} \frac{ \re\parn*{ \ol{f_\zeta^0(i\beta)} \mc{C} \mu' } }{\abs{f^0_\zeta(i\beta) + \partial_\zeta \surf{f}(i\beta)}^4} \\
                            & - \frac{\gamma^2}{4 \pi^2} \frac{ \re\parn*{ \parn*{ \ol{f_\zeta^0(i\beta)} \partial_\zeta^2 \surf{f}(i\beta) + \ol{\partial_\zeta \surf{f}(i\beta)} f^0_{\zeta\zeta}(i\beta) }\tau } }{\abs{f^0_\zeta(i\beta) + \partial_\zeta \surf{f}(i\beta)}^4} + \rho \frac{\mc{B}_2}{2} + \msc{Q},
        \end{aligned}
    \end{equation}
    with $\msc{Q}$ being a new real-analytic mapping $\Open \to C^{k+2+\alpha}(\T)$ satisfying the quadratic bound
    \[
        \msc{Q} = O\parn*{\rho^2 + \norm{ (\surf{f},\mu) }_{\X}^2 } \qquad \text{in } C^{k+2+\alpha}(\T).
    \]
    The right-hand side of~\eqref{parenthetical expansion G2} therefore constitutes a real-analytic mapping $\Open \to C^{k+2+\alpha}(\T)$ that vanishes when $(u,\rho) = (u^0,0)$, which in turn implies the same is true of $\G_2$. The proof of the lemma is complete.
\end{proof}

\subsection{Proof of \texorpdfstring{\cref{intro hollow vortex theorem}}{Theorem~\ref{intro hollow vortex theorem}}}

We are now ready to prove our main result on the existence of wave-borne hollow vortices. The majority of the work is done in the next lemma, which establishes that the invertibility of the linearized wave-borne hollow vortex system at $\rho = 0$ is equivalent to the invertibility of the corresponding linearized wave-borne point vortex system.

\begin{lemma}[Non-degeneracy]
    \label{hollow vortex nondegeneracy lemma}
    Suppose that $(w^0,\kappa^0, \beta) \in \Open^0$ represents a wave-borne point vortex and let $u^0 = (0,0,\gamma^0,0) \in \Open$. If $D_{(w,\kappa)} \F(w^0,\kappa^0,\beta)$ is an isomorphism $\X^0 \to \Y^0$, then the linearized operator $D_u \G(u^0,0)$ for the wave-borne hollow vortex problem is likewise an isomorphism $\X \to \Y$.
\end{lemma}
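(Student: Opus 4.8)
The plan is to exhibit $D_u\G(u^0,0)$ as a \emph{block-diagonal} operator with respect to the splitting $u=(\surf w,\mu,\gamma,Q)$, whose three diagonal blocks are (up to pre- and post-composition with isomorphisms) the linearized wave-borne point vortex operator $D_{(w,\kappa)}\F(w^0,\kappa^0,\beta)$ (invertible by hypothesis), the operator governing the linearized planar hollow vortex shape equation of~\cite{chen2023desingularization} (invertible), and multiplication by $-1$ on $\R$. Everything needed is already visible in~\eqref{definition G} and in the expansion~\eqref{parenthetical expansion G2} established while proving~\cref{G well-defined lemma}; recall that $\beta\in(0,1)$ throughout this section.

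First I would read off the structure of $D_u\G(u^0,0)$ mode by mode on the vortex circle. Since $\surf{\mc{Z}}^\rho\mu=O(\rho)$, at $\rho=0$ the surface component $\G_1$ sees neither $\mu$ nor $Q$, and in fact $\G_1(\surf w,\mu,\gamma,Q;0)=\F_1(w^0+\surf w,\kappa(\gamma),\beta)$, where $\kappa=\kappa(\gamma)$ is the local inverse of $\kappa\mapsto\gamma(\kappa,\beta)$ from~\eqref{definition gamma}; this inverse is a real-analytic diffeomorphism near $\gamma^0$ since $\partial_\kappa\gamma(\kappa^0,\beta)=-4\sin^2(\pi\beta)/(\cos(\pi\beta)-\kappa^0\sin(\pi\beta))^2\neq0$, and moreover $\gamma^0\neq0$ by~\eqref{gamma inequality}. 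Hence $D_\mu\G_1(u^0,0)=D_Q\G_1(u^0,0)=0$, while $D_{(\surf w,\gamma)}\G_1(u^0,0)$ is $D_{(w,\kappa)}\F_1(w^0,\kappa^0,\beta)$ precomposed with $(\dot{\surf w},\dot\gamma)\mapsto(\dot{\surf w},\kappa'(\gamma^0)\dot\gamma)$. For $\G_2$, I would substitute $\partial_\zeta\surf f(i\beta)=0$, $\gamma=\gamma^0$, and $\partial_\zeta f^0(i\beta)>0$ (from~\cref{sign V near vortex lemma}) into~\eqref{parenthetical expansion G2}: the terms $\rho\mc{B}_2$, $\rho F^{-2}\im(\cdots)$ and the quadratic remainder $\msc{Q}$ contribute nothing to the $u$-derivative at $\rho=0$; the only term producing Fourier modes $|n|\geq2$ is the $\mu$-linear term $-\tfrac{(\gamma^0)^2}{4\pi^2\,\partial_\zeta f^0(i\beta)^3}\re(\mc{C}\dot\mu')$; the constant mode receives only $-\dot Q$ from the $-Q$ term; and everything else is a first Fourier harmonic in $\tau$ depending on $(\dot{\surf w},\dot\gamma)$ alone. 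In particular $D_{\surf w}\G_2(u^0,0)$ and $D_\gamma\G_2(u^0,0)$ land in $\proj_1 C^{k+2+\alpha}_\reim(\T)$, $D_\mu\G_2(u^0,0)$ lands in $\proj_{>1}C^{k+2+\alpha}_\reim(\T)$, and $D_Q\G_2(u^0,0)=-\operatorname{id}$ lands in $\proj_0 C^{k+2+\alpha}_\reim(\T)$, which is exactly the claimed block-diagonal picture.

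It then remains to check that each diagonal block is an isomorphism. For $\proj_{>1}\G_2$: the equation reads $c\,\re(\mc{C}\dot\mu')=\proj_{>1}g_2$ with $c\neq0$, and since $\mc{C}\dot\mu'=-\sum_{m\leq-1}m\,\wh{\dot\mu}_m\tau^{m-1}$, the map $\dot\mu\mapsto\re(\mc{C}\dot\mu')$ is a bounded bijection $\mo{C}_\imim^{k+3+\alpha}(\T)\to\proj_{>1}C^{k+2+\alpha}_\reim(\T)$ — one checks mode by mode that it respects the symmetry classes and gains back the derivative lost to $\partial_\tau$ — and this is the linearized planar hollow vortex operator. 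For $\proj_0\G_2$: the equation reads $-\dot Q=\proj_0 g_2$. For $(\G_1,\proj_1\G_2)$: these involve only $(\dot{\surf w},\dot\gamma)$, and after the reparameterization $\gamma\leftrightarrow\kappa$ they are precisely the linearized wave-borne point vortex system, i.e.\ $D_{(w,\kappa)}\F(w^0,\kappa^0,\beta)(\dot w,\dot\kappa)=(g_1,\proj_1 g_2)$ up to the scalar isomorphism $\proj_1 C^{k+2+\alpha}_\reim(\T)\cong\R$; the identification $\G_1\leftrightarrow\F_1$ was established above, and $\proj_1\G_2(u^0,0)$ must be shown to equal a nonzero multiple of the linearization of $\F_2=\tfrac{1}{\pi i}\,\partial_\zeta^2 f(i\beta)/\partial_\zeta f(i\beta)-\kappa$, which is the force-balance content of the dynamic condition on a nearly circular core, as in~\cite{chen2023desingularization}. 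Since $D_{(w,\kappa)}\F(w^0,\kappa^0,\beta)$ is an isomorphism $\X^0\to\Y^0$ by hypothesis, each block is invertible with bounded inverse, so $D_u\G(u^0,0)$ is an isomorphism.

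The main obstacle is the last identification: verifying that $\proj_1\G_2(u^0,0)$ reproduces the linearized advection equation $\F_2$ with a \emph{nonzero} proportionality constant, rather than merely up to a triangular perturbation. This requires collecting the first-harmonic terms of~\eqref{parenthetical expansion G2} — those involving $\partial_\zeta^2\surf f(i\beta)$, $\partial_\zeta\surf f(i\beta)$, and the $\gamma$-derivatives of the $\kappa$-dependent coefficients — substituting the advection relation~\eqref{intro f vortex advection equation} for $f^0$, and tracking the powers of $\gamma^0$, $\kappa^0$ and $\partial_\zeta f^0(i\beta)$ so that the combination $\partial_\zeta^2 f(i\beta)/\partial_\zeta f(i\beta)-\pi i\kappa$ emerges with nonvanishing coefficient. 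A secondary but unavoidable chore is bookkeeping the symmetry subspaces $C_\imim^{\ell+\alpha}(\T)$ and $C_\reim^{\ell+\alpha}(\T)$ under $\mc{C}$, $\partial_\tau$ and $\re(\cdot)$, to confirm that each block maps between the correct function spaces with the claimed mapping properties.
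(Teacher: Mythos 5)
Your proposal is correct and follows essentially the same route as the paper: linearize $\G$ at $(u^0,0)$ using the expansions behind \cref{G well-defined lemma}, observe that after the reparameterization $\dot\gamma\mapsto\dot\kappa$ the operator block-diagonalizes over the decomposition $\Y_1\times\proj_1\Y_2$ (the linearized point vortex operator, invertible by hypothesis) versus $\proj_{>1}\Y_2\times\proj_0\Y_2$ (the $\re(\mc{C}\partial_\tau)$ block in $\dot\mu$ together with $-\dot Q$), and invert block by block. The computation you defer — that the first-harmonic part of $D_u\G_2(u^0,0)$ is exactly $D_{(w,\kappa)}\F_2$ times the nonzero constant $-\tfrac{(\gamma^0)^2}{2\pi^2 f_\zeta^0(i\beta)^2}$, using the advection relation~\eqref{intro f vortex advection equation} and the parity of $f^0_\zeta,f^0_{\zeta\zeta},\partial_\zeta\surf{\dot f},\partial_\zeta^2\surf{\dot f}$ on the imaginary axis — is precisely the content of~\eqref{DuG2 formula} in the paper and goes through as you anticipate.
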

\begin{proof}
    First consider the Bernoulli condition on the upper free boundary. By~\cref{complex potential lemma}, we have the expansion
    \begin{align*}
        \surf{a}^2 & = \abs{W_{0\zeta}(\xi + i) + \surf{\mc{Z}}^\rho \nu' + \rho^2 \partial_\zeta \surf{W}(\xi+i)}^2                                                                                     \\
                   & = (\surf{a}^0)^2 + 2 (\gamma-\gamma^0) \surf{a}^0 \partial_\gamma \surf{a}^0 + 2 \re \parn*{ \ol{W_\zeta^0(\xi+i)} \surf{\mc{Z}}^\rho \nu' } + O\parn*{\rho^2+(\gamma-\gamma^0)^2}.
    \end{align*}
    in $C^{k+2+\alpha}(\R)$. Here, it is important to recall that $\surf{\mc{Z}}^\rho \nu = O(\rho)$. Expanding the parenthetical quantity in~\eqref{Bernoulli surface}, we therefore obtain
    \begin{equation}
        \label{parenthetical expansion G1}
        \begin{aligned}
            \frac{1}{2} \parn*{ \frac{ a\surf{a}^2 }{\abs{f_\zeta^0+ \partial_\zeta \surf{f} + \rho \surf{\mc{Z}}^\rho \mu'}^2} -\frac{(\surf{a}^0)^2}{\abs{f_\zeta^0}^2} } & = - \frac{ \re \parn*{ (\surf{a}^0)^2 \ol{f_\zeta^0} \partial_\zeta \surf{f} } } {\abs{f_\zeta^0}^4} + (\gamma-\gamma^0) \frac{\surf{a}^0 \partial_\gamma \surf{a}^0}{\abs{f_\zeta^0}^2} \\
                                                                                                                                                                            & \qquad - \frac{\re\parn*{ \ol{W_\zeta^0}\surf{\mc{Z}}^\rho (\nu^0)'}}{\abs{f_\zeta^0}^2} + \msc{R},
        \end{aligned}
    \end{equation}
    with remainder term $\msc{R}$ that is real analytic $\Open \to \Y_1$ and satisfies
    \[
        \msc{R} = O\parn*{\rho^2 + \norm{ \surf{w} }_{\W}^2 + (\gamma-\gamma^0)^2 } \qquad \text{in } C^{k+2+\alpha}(\R).
    \]
    But, from~\eqref{definition nu0} we see that $\proj_1 \nu_0' = 0$, and hence by~\eqref{Z1 limit} we have that
    \begin{equation}
        \label{DuG1 formula}
        D_u \G_1(u^0,0)
        \begin{pmatrix}
            \surf{\dot{w}} \\
            \dot\mu        \\
            \dot\gamma     \\
            \dot Q
        \end{pmatrix}
        = - \frac{\re \parn*{ (\surf{a}^0)^2 \ol{f_\zeta^0} \partial_\zeta \surf{\dot{f}} }}{\abs{f_\zeta^0}^4} + \dot\kappa \frac{\surf{a}^0 \partial_\kappa \surf{a}^0}{\abs{f_\zeta^0}^2}+ \frac{1}{F^2} \surf{\dot{w}}
        =
        D_{(w,\kappa)} \F_1(w^0,\gamma^0,\beta)
        \begin{pmatrix}
            \surf{\dot{w}} \\
            \dot\kappa
        \end{pmatrix}
    \end{equation}
    where we are writing $\surf{\dot{f}}$ to denote the corresponding holomorphic function on $\confD$ with imaginary part $\surf{\dot{w}}$, and
    \begin{equation}
        \label{kappa dot to gamma dot}
        \dot\kappa \ceq \rest{(\partial_\gamma\kappa)}_{\gamma^0} \dot\gamma = -\frac{(\kappa^0- \cot(\pi \beta))^2}{4} \dot\gamma
    \end{equation}
    represents the variation of $\kappa$ at $\gamma^0$ by $\dot\gamma$.

    Turning next to the Bernoulli condition on the vortex boundary, and recalling the asymptotic expansion~\eqref{parenthetical expansion G2}, we infer that the linearized operator is
    \begin{align*}
        D_u \G_2(u^0,0)
        \begin{pmatrix}
            \surf{\dot{w}} \\
            \dot\mu        \\
            \dot\gamma     \\
            \dot Q
        \end{pmatrix}
        = & - \frac{2 \re \parn*{ \frac{(\gamma^0)^2 \kappa^0}{2 \pi i} \tau } \re\parn*{ \ol{f^0_\zeta(i\beta)} \partial_\zeta \surf{\dot{f}}(i\beta) }}{\abs{f^0_\zeta(i\beta)}^4} - \frac{\re \parn*{ \frac{(\gamma^0)^2 \dot\kappa}{2\pi i} \tau }}{\abs{f^0_\zeta(i\beta)}^2} \\
          & - \frac{(\gamma^0)^2}{2 \pi^2} \frac{ \re\parn*{ \parn*{ \ol{f_\zeta^0(i\beta)} \partial_\zeta^2 \surf{\dot{f}}(i\beta) + \ol{\partial_\zeta \surf{\dot{f}}(i\beta)} f^0_{\zeta\zeta}(i\beta) }\tau } }{\abs{f^0_\zeta(i\beta)}^4}                                     \\
          & - \frac{(\gamma^0)^2}{2 \pi^2} \frac{ \re\parn*{ \ol{f_\zeta^0(i\beta)} \mc{C} \dot\mu' } }{\abs{f^0_\zeta(i\beta)}^4} -\dot Q.
    \end{align*}
    Note first that as a consequence of the symmetry assumptions, both $\partial_\zeta \surf{\dot{f}}$ and $f^0_\zeta$ are real on imaginary while $\partial_\zeta^2 \surf{\dot{f}}$ and $f^0_{\zeta\zeta}$ are imaginary on imaginary. Using this and the point vortex condition satisfied by $f^0$, we can simplify the above expression to
    \begin{equation}
        \label{DuG2 formula}
        \begin{aligned}
            D_u \G_2(u^0,0)
            \begin{pmatrix}
                \surf{\dot{w}} \\
                \dot\mu        \\
                \dot\gamma     \\
                \dot Q
            \end{pmatrix}
            = & -\frac{(\gamma^0)^2}{2 \pi^2} \frac{1}{f_\zeta^0(i\beta)^2} \re \parn*{ \parn*{ \frac{ {f_\zeta^0(i\beta)} \partial_\zeta^2 \surf{\dot{f}}(i\beta) - f^0_{\zeta\zeta}(i\beta) {\partial_\zeta \surf{\dot{f}}(i\beta)} }{f^0_\zeta(i\beta)^2} - \pi i \dot\kappa } \tau } \\
              & - \frac{(\gamma^0)^2}{2 \pi^2} \frac{1}{f^0_\zeta(i\beta)^3} \re{\mc{C}} \dot\mu' -\dot Q.
        \end{aligned}
    \end{equation}

    We recognize the coefficient of $\tau$ in the first term on the right-hand side above as $D_{(w,\kappa)}\F_2(w^0,\gamma^0,\beta)$ acting on $(\surf{\dot{w}}, \dot\kappa)$. This suggests using an alternate decomposition for the codomain
    \[
        \Y \cong \parn*{\Y_1 \times \proj_1 \Y_2} \times (1-\proj_1) \Y_2 \cong \Y^0 \times (1-\proj_1) \Y_2.
    \]
    Here the isomorphism $\Y^0 \to \Y_1 \times \proj_1 \Y_2$ is given explicitly by $\operatorname{diag}(\id,\iota_2)$, where $\iota_2$ is the natural association between $\wh{\varphi}_1 \in \R$ and $2\re(i\wh{\varphi}_1 \placeholder) \in \proj_1 \Y_2$. Combining this with~\eqref{DuG1 formula} and~\eqref{DuG2 formula} gives
    \[
        D_u \G(u^0,0)
        \begin{pmatrix}
            \surf{\dot{w}} \\
            \dot\mu        \\
            \dot\gamma     \\
            \dot Q
        \end{pmatrix}
        =
        \begin{pmatrix}
            \iota D_{(w,\kappa)} \F(w^0, \gamma^0,\beta) & 0                       \\
            0                                            & D_{(\mu,Q)} \G_2(u^0,0)
        \end{pmatrix}
        \begin{pmatrix}
            \surf{\dot{w}} \\
            \dot\kappa     \\
            \dot\mu        \\
            \dot Q
        \end{pmatrix}
        ,
    \]
    where
    \[
        \iota \ceq \operatorname{diag}\parn*{\id, \, -\frac{(\gamma^0)^2}{2 \pi^2} \frac{1}{f_\zeta^0(i\beta)^2} \iota_2}.
    \]
    The upper left block on the right-hand side above is invertible by hypothesis. The lower right block is likewise invertible $\mo{C}_\imim^{k+3+\alpha} \times \R \times \R \to C_\reim^{k+2+\alpha}$. This follows from our calculation of $D_u \G_2$ in~\eqref{DuG2 formula} and the symmetry assumptions, as
    \[
        \rest[\big]{\range{\re{\mc{C} \partial_\tau }}}_{\mo{C}_\imim^{k+3+\alpha}(\T)} = \proj_{>1} C_\reim^{k+2+\alpha}(\T), \qquad \range{D_Q \G_2(u^0,0)} = \proj_0 C_\reim^{k+2+\alpha}(\T).
    \]
    Thus, $D_u \G(u^0,0)$ is indeed an isomorphism, and the proof of the lemma is complete.
\end{proof}

\begin{proof}[Proof of~\cref{intro hollow vortex theorem}]
    Let $\Curve$ be the global bifurcation curve given by~\cref{intro point vortex theorem}. As a consequence of its construction, we know that there is discrete set $\mc{N} \subset \R$ of parameter values such that
    \[
        D_{(w,\kappa)} \F(w(s), \kappa(s), \beta(s)) \quad \text{is an isomorphism } \X^0 \to \Y^0\quad \text{for } s \not\in \mc{N}.
    \]
    In the language of analytic global bifurcation theory, the portions of the curve corresponding to parameter values in connected components of $\R \setminus \mc{N}$ are called \emph{distinguished arcs}~\cite{buffoni2003analytic}. For any $s_0 \in \R \setminus \mc{N}$, the existence of a local (real-analytic) curve of wave-borne hollow vortices $\Kurve_{s_0}$ then follows directly from~\cref{G well-defined lemma,hollow vortex nondegeneracy lemma} and the (real-analytic) implicit function theorem.

    It remains to calculate the leading-order asymptotics~\eqref{hollow vortex leading order}. Writing $u^\rho = u^0 + \rho \dot u + O(\rho^2)$, we see that
    \[
        D_u \G_1(u^0,0) \dot u = - D_\rho \G_1(u^0,0), \qquad D_u \G_2(u^0,0) \dot u = - D_\rho \G_2(u^0,0).
    \]
    The formulas for $D_u \G_1(u^0,0)$ and $D_u \G_2(u^0,0)$ are given in~\eqref{DuG1 formula} and~\eqref{DuG2 formula} respectively. From~\eqref{parenthetical expansion G1} and~\eqref{definition nu0} it follows that $D_\rho \G_1(u^0,0) = 0$. Therefore we obtain
    \begin{equation}
        \label{linearized eqn1}
        - \frac{\re \parn*{ (a^0)^2 \ol{f_\zeta^0} \partial_\zeta \surf{\dot{f}} }}{\abs{f_\zeta^0}^4} + \dot\kappa \frac{a^0 \partial_\kappa a^0}{\abs{f_\zeta^0}^2}+ \frac{1}{F^2} \surf{\dot{w}} = 0.
    \end{equation}

    To compute $D_\rho \G_2(u^0,0)$ we will use the expansion~\eqref{parenthetical expansion G2}, and the fact that $f^0, f^0_{\zeta\zeta}$ are imaginary on imaginary axis, while $f^0_\zeta$ and $f^0_{\zeta\zeta\zeta}$ are real on imaginary axis. Direct computation yields that
    \[
        D_\rho \G_2(u^0,0) = - \frac{(\kappa^0)^2 (\gamma^0)^2}{16 f^0_\zeta(i\beta)^2} + (\gamma^0)^2 \brak*{ \frac{ \parn*{ 1 - 3 \csc^2{(\pi \beta)} } }{24 f^0_\zeta(i\beta)^2} - \frac{(\kappa^0)^2 (\gamma^0)^2 }{16 f^0_\zeta(i\beta)^2} - \frac{ f^0_{\zeta\zeta\zeta}(i\beta)}{8\pi^2 f^0_\zeta(i\beta)^3} } \re { \tau^2 }.
    \]
    Comparing this with~\eqref{DuG2 formula} yields
    \begin{equation}
        \label{linearized eqn2}
        \begin{aligned}
            \dot Q                                                                 & = -\frac{(\kappa^0)^2 (\gamma^0)^2}{16 f^0_\zeta(i\beta)^2},                                                                                                                                                                                  \\
            \pi i \dot\kappa                                                       & = \frac{ {f_\zeta^0(i\beta)} \partial_\zeta^2 \surf{\dot{f}}(i\beta) - f^0_{\zeta\zeta}(i\beta) {\partial_\zeta \surf{\dot{f}}(i\beta)} }{f^0_\zeta(i\beta)^2},                                                                               \\
            \frac{(\gamma^0)^2}{2 \pi^2 f^0_\zeta(i\beta)^3} \re{ \mc{C} \dot\mu'} & = (\gamma^0)^2 \brak*{ \frac{ \parn*{ 1 - 3 \csc^2{(\pi \beta)} } }{24 f^0_\zeta(i\beta)^2} - \frac{(\kappa^0)^2 (\gamma^0)^2 }{16 f^0_\zeta(i\beta)^2} - \frac{ f^0_{\zeta\zeta\zeta}(i\beta)}{8\pi^2 f^0_\zeta(i\beta)^3} } \re { \tau^2 }.
        \end{aligned}
    \end{equation}

    Local uniqueness of solutions to~\eqref{linearized eqn1} and the second equation of~\eqref{linearized eqn2} implies that $\surf{\dot{f}} = 0$ and $\dot \kappa = 0$, hence $\dot\gamma = 0$ due to~\eqref{kappa dot to gamma dot}. Together with the first equation of~\eqref{linearized eqn2} we obtain~\eqref{hollow vortex leading order parameters}. Finally, solving the third equation of~\eqref{linearized eqn2} we have
    \[
        \dot \mu = \brak*{ \pi^2 f^0_\zeta(i\beta) \parn*{ \frac{ 1 - 3 \csc^2{(\pi \beta)} }{6} - \frac{(\kappa^0)^2}{4} } - \frac{ f^0_{\zeta\zeta\zeta}(i\beta)}{2}} \re{ \tau },
    \]
    which yields the second equality in~\eqref{hollow vortex leading order f}.
\end{proof}

\section*{Acknowledgments}

The research of RMC is supported in part by the NSF through DMS-1907584 and DMS-2205910. The research of SW is supported in part by the NSF through DMS-1812436 and DMS-2306243, and the Simons Foundation through award 960210. The authors are also grateful to the Institut Mittag-Leffler, where a portion of this research was undertaken while KV, SW, and MHW were in residence as participants in the program ``Order and Randomness in Partial Differential Equations.''

\appendix

\section{Quoted results}
\label{quoted results appendix}

First, let us recall the maximum principle and Hopf boundary-point lemma, which we use extensively in studying the monotonicity properties of the waves in \cref{local monotonicity section,global monotonicity section}. In particular, note that we are using the version that allows for an adverse sign of the zeroth order term provided that the sign of the solution is known; see, for example,~\cites{fraenkel2000introduction}[Lemma~S]{gidas1979symmetry}[Lemma~1]{serrin1971symmetry}.

\begin{theorem}
    \label{max principle}
    Let $\Omega \subset \R^n$ be a connected, open set (possibly unbounded), and consider the second-order operator $L$ given by
    \[
        L \ceq \sum_{i,j = 1}^n a_{ij}(x) \partial_i \partial_j + \sum_{i=1}^n b_i(x) \partial_i + c(x)
    \]
    where $\partial_i \ceq \partial_{x_i}$ and the coefficients $a_{ij}, b_i, c \in C^0(\ol{\Omega})$. We assume that $L$ is uniformly elliptic in the sense that there exists $\lambda > 0$ with
    \[
        \sum_{ij} a_{ij}(x) \xi_i \xi_j \geq \lambda \abs{\xi}^2, \qquad \textup{for all } \xi \in \R^n, x \in \ol{\Omega},
    \]
    and that $a_{ij}$ is symmetric. Let $u \in C^2(\Omega) \cap C^1(\ol{\Omega})$ be a classical solution of $Lu = 0$ in $\Omega$.
    \begin{enumerate}[label=\textup{(\roman*)}]
        \item \textup{(Strong maximum principle)} Suppose $u$ attains its maximum value on $\ol{\Omega}$ at a point in the interior of $\Omega$. If $c \leq 0$ in $\Omega$ and $\sup_\Omega u \ge 0$, or if $\sup_{\Omega} u = 0$, then $u$ is a constant function.
              \label{strong max principle}

        \item \textup{(Hopf boundary-point lemma)} Suppose that $u$ attains its maximum value on $\ol{\Omega}$ at a point $x_0 \in \partial \Omega$ for which there exists an open ball $B \subset \Omega$ with $\ol{B} \cap \partial\Omega = \brac{ x_0 }$. Assume that either $c \leq 0$ in $\Omega$, or else $\sup_B u = 0$. Then $u$ is a constant function or
              \[
                  \nu \cdot \nabla u(x_0) > 0,
              \]
              where $\nu$ is the outward unit normal to $\Omega$ at $x_0$.
              \label{hopf lemma}
    \end{enumerate}
\end{theorem}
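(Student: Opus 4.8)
The plan is to deduce Theorem~\ref{max principle} from its classical formulation, in which the zeroth-order coefficient is nonpositive, and then appeal to the standard Hopf barrier argument. Both conclusions are purely local --- they involve $u$ only near a single point, together with a ball compactly contained in $\Omega$ --- so the possibility that $\Omega$ is unbounded is irrelevant, and I would work throughout on a fixed ball.

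For the reduction I would split off the positive part of $c$. Write $c = c^+ - c^-$ with $c^{\pm} \ceq \max(\pm c, 0) \in C^0(\ol\Omega)$, and set $\tilde L \ceq L - c^+$; this operator has the same (uniformly elliptic, symmetric) principal part as $L$ and has zeroth-order coefficient $-c^- \le 0$. In part~(i), if $\sup_\Omega u = 0$ then $u \le 0$, so $\tilde L u = L u - c^+ u = -c^+ u \ge 0$, and $u$ is an $\tilde L$-subsolution attaining a nonnegative maximum at an interior point; the classical strong maximum principle for subsolutions then forces $u \equiv 0$. In the other case of part~(i) one has $c \le 0$ already and takes $\tilde L = L$. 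In part~(ii), if $\sup_B u = 0$, then continuity together with the fact that $x_0 \in \ol B$ is a global maximiser gives $u(x_0) = 0$ and $u \le 0$ on $B$, so again $\tilde L u \ge 0$ on $B$ with nonpositive zeroth-order term; and the remaining case is $c \le 0$. Thus in every branch it suffices to treat the classical case $c \le 0$.

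For that classical case the engine is the barrier estimate behind the Hopf lemma. After translating, take the interior ball to be $B = B_R(0)$ with $\ol B \cap \partial\Omega = \{x_0\}$, and suppose first that $u < u(x_0)$ throughout $B$. On the annulus $A \ceq \{R/2 < |x| < R\}$ I would use the exponential barrier $h(x) \ceq e^{-\alpha|x|^2} - e^{-\alpha R^2}$: a direct computation using uniform ellipticity, $|x| \ge R/2$ on $A$, boundedness of the coefficients on $\ol A$, and $c \le 0$ shows $L h > 0$ on $A$ once $\alpha$ is large. Then for small $\varepsilon > 0$ the function $v \ceq u - u(x_0) + \varepsilon h$ is $\le 0$ on $\partial A$ (on $\partial B_{R/2}$ because $u - u(x_0)$ is strictly negative there and $h$ is bounded; on $\partial B_R$ because $h = 0$) and satisfies $L v > 0$ in $A$ (using $L u \ge 0$ and $c\,u(x_0) \le 0$). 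The weak maximum principle gives $v \le 0$ in $\ol A$, so $v$ is maximised over $\ol A$ at $x_0 \in \partial A$, forcing $\partial_\nu v(x_0) \ge 0$; since $\partial_\nu h(x_0) = -2\alpha R\,e^{-\alpha R^2} < 0$, this yields $\partial_\nu u(x_0) > 0$. From this, the strong maximum principle follows in the usual way: if $u$ attained its maximum $M$ at an interior point without being $\equiv M$, one could touch the nonempty open set $\{u < M\}$ from inside against $\{u = M\}$ at a point $z$, apply the barrier estimate on a ball inside $\{u<M\}$ to get $\partial_\nu u(z) > 0$, and contradict $\nabla u(z) = 0$ at the interior maximiser $z$. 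Finally the dichotomy in part~(ii) is immediate: either $u$ coincides with $u(x_0)$ somewhere in $B$ --- whence $u$ is constant by the strong maximum principle --- or $u < u(x_0)$ in $B$ and the barrier estimate applies.

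The only step demanding genuine care is the reduction: in each branch of the hypotheses one must verify that the available sign information on $u$ (namely $u \le 0$, coming from $\sup_\Omega u = 0$ in part~(i) or from $\sup_B u = 0$ in part~(ii)) is exactly what turns the equation $L u = 0$ into the inequality $\tilde L u \ge 0$ for the modified operator with nonpositive zeroth-order coefficient. Everything past that point is the textbook Hopf barrier argument, so I would expect no essential difficulty --- consistent with the fact that the paper simply cites standard references here.
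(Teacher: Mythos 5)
The paper offers no proof of this statement at all: it is a quoted standard result, and the authors simply cite Fraenkel's book, Gidas--Ni--Nirenberg (Lemma~S), and Serrin (Lemma~1). Your proposal reconstructs the argument behind those references --- split $c = c^+ - c^-$, use the known sign of $u$ to convert $Lu=0$ into the subsolution inequality $(L-c^+)u \geq 0$ for an operator with nonpositive zeroth-order coefficient, then run the exponential-barrier Hopf argument and deduce the strong maximum principle from it --- and the branches driven by $\sup_\Omega u = 0$ or $\sup_B u = 0$ are handled correctly.

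There is, however, a genuine gap in the branch of part~(ii) in which one assumes only $c \leq 0$. Your barrier step needs $L\bigl(u-u(x_0)\bigr) \geq 0$ on the annulus, i.e.\ $c\,u(x_0) \leq 0$, and with $c \leq 0$ this is available only when $u(x_0) \geq 0$; nothing in the hypotheses of that branch supplies this sign, and you invoke $c\,u(x_0)\leq 0$ without justification (the same sign is needed again in your dichotomy step, since the $c\leq 0$ strong maximum principle requires a nonnegative interior maximum). This cannot be repaired within the proof, because that branch is false as literally stated: on $\Omega=(0,1)$ the function $u(x) = -(e^{x}+e^{2-x})$ satisfies $u''-u=0$ (so $c=-1\leq 0$), attains its maximum over $[0,1]$ at $x_0=1$, where the interior ball condition holds, is not constant, and yet $u'(1)=0$. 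The classical formulations (e.g.\ Gilbarg--Trudinger, Lemma~3.4) include the hypothesis $u(x_0)\geq 0$ in the $c\leq 0$ case, exactly as part~(i) of the theorem does through $\sup_\Omega u \geq 0$; part~(ii) should be read, and your proof stated, with that additional assumption. Since the paper only ever applies the lemma with vanishing zeroth-order coefficient or at points where the maximum value is $0$, nothing downstream is affected, but your write-up should make the extra sign condition explicit rather than use it silently.
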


In order to extend the local curve of wave-borne point vortices to the large-amplitude regime, we use analytic global bifurcation theory, which was introduced by~\textcite{dancer1973bifurcation,dancer1973globalstructure} in the late 1970s, and then refined and popularized, particularly in the water waves community, by~\textcite{buffoni2003analytic}. These results were developed with an eye towards problems on bounded domains, and thus took as hypotheses that the nonlinear operator is Fredholm index $0$ and that the solution set is locally compact. For solitary waves, neither of these facts can be taken for granted. We therefore prefer to use the below version, which follows the philosophy in~\cite{chen2018existence}: for problems posed on unbounded domains, the loss of compactness should be treated as an alternative. For the present purposes, it is also more convenient to phrase the results as a global implicit function theorem. A proof can be found in~\cite[Theorem B.1]{chen2023global}.

\begin{theorem}
    \label{homoclinic global ift}
    Let $\msc{W}$ and $\msc{Z}$ be Banach spaces, $\msc{U} \subset \msc{W} \times \R$ an open set containing a point $(w_0, \lambda_0)$. Suppose that $\msc{G} \colon \msc{U} \to \msc{Z}$ is real analytic and satisfies
    \[
        \msc{G}(w_0, \lambda_0) = 0, \qquad \msc{G}_w(w_0, \lambda_0) \colon \msc{W} \to \msc{Z} \text{ is an isomorphism}.
    \]
    Then there exist a curve $\Kurve$ that admits the global $C^0$ parameterization
    \[
        \Kurve \ceq \brac*{ (w(s), \lambda(s)) : s \in \R } \subset \msc{G}^{-1}(0) \cap \msc{U},
    \]
    and satisfies the following.
    \begin{enumerate}[label=\textup{(\alph*)}]
        \item At each $s \in \R$, the linearized operator $\msc{G}_w(w(s), \lambda(s)) \colon \msc{W} \to \msc{Z}$ is Fredholm index $0$.
              \label{K well behaved}
        \item One of the following alternatives holds as $s \to \infty$ and $s \to -\infty$.
              \begin{enumerate}[label=\textup{(A\arabic*$'$)}]
                  \item \textup{(Blowup)} The quantity
                        \[
                            N(s)\ceq \norm{w(s)}_{\msc{W}} + \abs{\lambda(s)} +\frac{1}{\dist\parn[\big]{(w(s),\lambda(s)), \partial \msc{U}}}
                        \]
                        is unbounded.
                        \label{K blowup alternative}
                  \item \textup{(Loss of compactness)} There exists a sequence $s_n \to \pm\infty$ with $\sup_n{N(s_n)} < \infty$, but $( w(s_n), \lambda(s_n) )$ has no convergent subsequence in $\msc{W} \times \R$.
                        \label{K loss of compactness alternative}
                  \item \textup{(Loss of Fredholmness)} There exists a sequence $s_n \to \pm\infty$
                        with $\sup_{n}{N(s_n)} < \infty$ and so that $(w(s_n), \lambda(s_n)) \to (w_*, \lambda_*) \in \msc{U}$ in $\msc{W} \times \R$, however $\msc{G}_w(w_*, \lambda_*)$ is not Fredholm index $0$.
                        \label{K loss of fredholmness alternative}
                  \item \textup{(Closed loop)} There exists $T > 0$ such that $(w(s+T), \lambda(s+T)) = (w(s), \lambda(s))$ for all $s \in (0,\infty)$.
                        \label{K loop}
              \end{enumerate}
              \label{K alternatives}
        \item Near each point $(w(s_0),\lambda(s_0)) \in \Kurve$, we can locally reparametrize $\Kurve$ so that $s\mapsto (w(s),\lambda(s))$ is real analytic.
              \label{K reparam}
        \item The curve $\Kurve$ is maximal in the sense that, if $\msc{J} \subset \msc{G}^{-1}(0) \cap \msc{U}$ is a locally real-analytic curve containing $(w_0,\lambda_0)$ and along which $\msc G_w$ is Fredholm index $0$, then $\msc{J} \subset \Kurve$.
              \label{K maximal part}
    \end{enumerate}
\end{theorem}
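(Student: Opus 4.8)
The plan is to combine the classical analytic global bifurcation theory of~\textcite{dancer1973bifurcation,dancer1973globalstructure} and~\textcite{buffoni2003analytic} with the device, used in~\cite{chen2018existence}, of treating loss of compactness as an alternative rather than a standing hypothesis. First I would apply the analytic implicit function theorem at $(w_0,\lambda_0)$: since $\mc{G}_w(w_0,\lambda_0)$ is an isomorphism, there is locally a unique real-analytic arc of solutions through this point, which we normalize to be parameterized near $s=0$ with $(w(0),\lambda(0))=(w_0,\lambda_0)$. This arc sits inside the \emph{distinguished} part of the zero set, namely the set of $(w,\lambda)\in\mc{G}^{-1}(0)\cap\mc{U}$ at which $\mc{G}_w$ is an isomorphism; there, $\mc{G}^{-1}(0)$ is automatically a single real-analytic curve, with no branching.

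The crux is the local structure of $\mc{G}^{-1}(0)$ at a point $(w_*,\lambda_*)$ where $\mc{G}_w(w_*,\lambda_*)$ is merely Fredholm of index $0$. Here I would perform a Lyapunov--Schmidt reduction, splitting off the finite-dimensional kernel and cokernel of $\mc{G}_w(w_*,\lambda_*)$ so as to reduce $\mc{G}=0$ near $(w_*,\lambda_*)$ to an equivalent finite-dimensional real-analytic equation $\Phi(\xi,\lambda)=0$. Appealing to the structure theory of real-analytic varieties --- Weierstrass preparation together with Puiseux expansions in the one-dimensional-kernel case, and more generally the fact that a one-dimensional real-analytic set is locally a finite union of real-analytic arcs --- one finds that, provided the solution set is one-dimensional at $(w_*,\lambda_*)$, it consists near that point of finitely many analytic arcs issuing from $(w_*,\lambda_*)$. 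The hard part will be the bookkeeping needed to splice these local pictures into a single globally defined $C^0$ curve in a way that is consistent, yields the maximality in~\ref{K maximal part}, and supports the local analytic reparameterization in~\ref{K reparam}; this is precisely where the \emph{analyticity} of $\mc{G}$, rather than mere smoothness, is indispensable, since it forbids pathological (e.g.\ infinitely oscillating) local zero sets.

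With the local picture established, I would construct $\Kurve$ by maximal continuation: take the connected component of the part of $\mc{G}^{-1}(0)\cap\mc{U}$ on which $\mc{G}_w$ is Fredholm of index $0$ that contains $(w_0,\lambda_0)$, observe by the previous step that it is locally arcwise connected, and extract from it a surjective $C^0$ parameterization $\R\to\Kurve$ that becomes real analytic after local reparameterization. The abstract continuation argument of~\cite[Chapter~9]{buffoni2003analytic}, localized to the Fredholm locus as in~\cite{chen2018existence,chen2023global}, produces both the parameterization and statements~\ref{K well behaved}, \ref{K reparam}, and~\ref{K maximal part}, the latter via uniqueness of analytic continuation: any other locally real-analytic solution curve through $(w_0,\lambda_0)$ along which $\mc{G}_w$ is Fredholm index $0$ agrees with $\Kurve$ near each common point, whence a connectedness argument gives containment.

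Finally, I would establish the dichotomy~\ref{K alternatives}. Fix $s_n\to+\infty$ (the case $s_n\to-\infty$ is identical). If $N(s_n)$ is unbounded along some such sequence we are in~\ref{K blowup alternative}; so suppose $\sup_n N(s_n)<\infty$, which in particular keeps $(w(s_n),\lambda(s_n))$ bounded away from $\partial\mc{U}$. If no subsequence of $(w(s_n),\lambda(s_n))$ converges in $\msc{W}\times\R$ we are in~\ref{K loss of compactness alternative}; otherwise, after passing to a subsequence, $(w(s_n),\lambda(s_n))\to(w_*,\lambda_*)\in\mc{U}$. If $\mc{G}_w(w_*,\lambda_*)$ is not Fredholm of index $0$ we are in~\ref{K loss of fredholmness alternative}. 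In the remaining case $\mc{G}_w(w_*,\lambda_*)$ is Fredholm of index $0$, so by the local structure result $\mc{G}^{-1}(0)$ near $(w_*,\lambda_*)$ is a finite union of analytic arcs through $(w_*,\lambda_*)$; since $\Kurve$ is maximal and accumulates at $(w_*,\lambda_*)$ along $s_n\to\infty$, it must contain $(w_*,\lambda_*)$ and continue through it, and as $\Kurve$ is injective except for possible periodicity this is only consistent with $\Kurve$ being a closed loop, which is~\ref{K loop}. Running the same trichotomy at $s\to-\infty$ completes the proof.
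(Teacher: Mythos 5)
A preliminary remark on the comparison: the paper does not prove this theorem at all --- it is stated as a quoted result, with the proof deferred to \cite[Theorem B.1]{chen2023global}, whose argument is the Dancer--Buffoni--Toland analytic global bifurcation machinery \cite{dancer1973bifurcation,buffoni2003analytic} adapted, as in \cite{chen2018existence}, so that loss of compactness and loss of Fredholmness appear as alternatives rather than hypotheses. Your outline follows exactly that strategy, so the route is the right one; the difficulty is that the steps you defer to ``bookkeeping'' are the actual content of the theorem, and two of them do not go through as written.

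First, defining $\Kurve$ as the connected component of the index-$0$ Fredholm locus of $\msc{G}^{-1}(0)\cap\msc{U}$ containing $(w_0,\lambda_0)$ and then ``extracting a surjective $C^0$ parameterization'' is not viable: that component need not be one-dimensional (the Lyapunov--Schmidt reduced zero set at a degenerate point can contain higher-dimensional pieces, and even a one-dimensional germ may have more than two branches), so no single curve can exhaust it. The genuine construction continues distinguished arcs one at a time, making a selection at each singular point, and maximality is then proved in the sense of the theorem's last item (containment of locally analytic solution curves), not exhaustion of a component; your local claim that near any Fredholm index-$0$ point the zero set ``is a finite union of analytic arcs'' likewise presupposes one-dimensionality of the germ, which is precisely where the hard analytic-variety theory of \cite{buffoni2003analytic} enters. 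Second, the closed-loop alternative needs a real argument: from subsequential accumulation at a point $(w_*,\lambda_*)$ with Fredholm index-$0$ linearization you must show that the curve eventually coincides with one of the finitely many local branches through that point and hence reconnects with an earlier portion of itself; and the ``injective except for possible periodicity'' property you invoke is itself a feature of the constructed parameterization that has to be established, not assumed. As it stands, the proposal is a correct road map that leaves its two hardest steps to the very literature the theorem is quoted from.
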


Finally, we make use of the following standard bordering lemma. A proof can be found, for example, in~\cite[Lemma 2.3]{beyn1990numerical}.

\begin{lemma}[Fredholm bordering]
    \label{bordering lemma} Let $\msc{W}$ and $\msc{Z}$ be Banach spaces and suppose that $\msc{A} \colon \msc{W} \to \msc{Z}$, $\msc{B} \colon \R^m \to \msc{Z}$, $\mc{C} \colon \msc{W} \to \R^n$, and $\mc{D} \colon \R^m \to \R^n$ are bounded linear mappings. If, in addition, $\msc{A}$ is Fredholm index $k$, then the operator matrix
    \[
        \begin{pmatrix}
            \msc{A} & \msc{B} \\ \mc{C} & \mc{D}
        \end{pmatrix} \colon \msc{W} \times \R^m \to \msc{Z} \times \R^n
    \]
    is Fredholm with index $k + m - n$.
\end{lemma}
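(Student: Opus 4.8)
The plan is to exhibit the bordered operator as a finite-rank --- hence compact --- perturbation of a block-diagonal operator whose index can be read off immediately, and then invoke the invariance of the Fredholm index under compact perturbations. (Alternatively one could run a direct kernel/cokernel dimension count via the bordering columns and rows, but the perturbation route is cleaner.)

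Concretely, I would write $M \ceq \left(\begin{smallmatrix} \msc{A} & \msc{B} \\ \mc{C} & \mc{D} \end{smallmatrix}\right)$ and $M_0 \ceq \left(\begin{smallmatrix} \msc{A} & 0 \\ 0 & 0 \end{smallmatrix}\right)$, both viewed as bounded operators $\msc{W} \times \R^m \to \msc{Z} \times \R^n$. Since $\msc{A}$ is Fredholm of index $k$, the kernel $\kernel \msc{A}$ is finite dimensional and $\range \msc{A}$ is closed with $\operatorname{codim} \range \msc{A} = \dim \kernel \msc{A} - k < \infty$. It follows that $\kernel M_0 = \kernel \msc{A} \times \R^m$ is finite dimensional, that $\range M_0 = \range \msc{A} \times \brac{0}$ is closed in $\msc{Z} \times \R^n$, and that $(\msc{Z} \times \R^n)/\range M_0 \cong (\msc{Z}/\range \msc{A}) \times \R^n$. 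Counting dimensions, $M_0$ is Fredholm with $\ind M_0 = (\dim \kernel \msc{A} + m) - (\operatorname{codim} \range \msc{A} + n) = k + m - n$.

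Next I would note that $M - M_0 = \left(\begin{smallmatrix} 0 & \msc{B} \\ \mc{C} & \mc{D} \end{smallmatrix}\right)$ has range contained in the finite-dimensional subspace $\range \msc{B} \times \R^n$ of $\msc{Z} \times \R^n$, hence is of finite rank and in particular compact. By the standard perturbation theorem --- the Fredholm operators form an open subset of the bounded operators $\msc{W} \times \R^m \to \msc{Z} \times \R^n$ that is stable under compact perturbations, and the index is locally constant there --- we conclude that $M$ is Fredholm with $\ind M = \ind M_0 = k + m - n$. The whole argument is essentially bookkeeping, so I do not expect a genuine obstacle; the only point that merits a moment's care is the closedness of $\range M_0$, which is immediate since $\range \msc{A}$ is closed and pairing a closed subspace with finite-dimensional factors preserves closedness.
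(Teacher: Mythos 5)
Your proposal is correct. Note that the paper does not actually prove this lemma: it simply cites~\cite[Lemma 2.3]{beyn1990numerical}, so there is no in-paper argument to compare against. Your route --- writing the bordered matrix $M$ as $M_0 + (M - M_0)$ with $M_0 = \operatorname{diag}(\msc{A},0)$, computing $\ind M_0 = (\dim\kernel\msc{A} + m) - (\operatorname{codim}\range\msc{A} + n) = k + m - n$ directly, and observing that $M - M_0$ has range inside the finite-dimensional subspace $\range\msc{B} \times \R^n$ and is therefore a finite-rank (hence compact) perturbation --- is the standard self-contained argument, and every step checks out: $\range M_0 = \range\msc{A} \times \brac{0}$ is closed because $\range\msc{A}$ is, the cokernel identification $(\msc{Z}\times\R^n)/\range M_0 \cong (\msc{Z}/\range\msc{A})\times\R^n$ is correct, and invariance of the Fredholm property and index under compact perturbations finishes the proof. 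If anything, your write-up is more informative than the paper's citation, since it makes transparent that the bordering rows and columns only shift the index by $m - n$ regardless of $\msc{B}$, $\mc{C}$, $\mc{D}$.
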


\printbibliography[title=References]

\end{document}